\theoremstyle{thmstyleone}%
\newtheorem{theorem}{Theorem}
\newtheorem{proposition}[theorem]{Proposition}%
\newtheorem{lemma}{Lemma}
\theoremstyle{thmstyletwo}%
\newtheorem{remark}{Remark}%
\theoremstyle{thmstylethree}%
\newcommand{\R}{\ensuremath{\mathbb{R}}}
\pgfplotsset{compat=1.18}
\definecolor{OliveGreen}{HTML}{3C8031}
\definecolor{ao(english)}{rgb}{0.0, 0.5, 0.0}
\begin{document}

\title[Optimal Polynomial Smoothers for Parallel AMG]{Optimal Polynomial Smoothers for Parallel AMG}


%
%
%
%
%

\author[1]{\fnm{Pasqua} \sur{D'Ambra}}\email{pasqua.dambra@cnr.it}
\equalcont{These authors contributed equally to this work.}

\author*[1,2]{\fnm{Fabio} \sur{Durastante}}\email{fabio.durastante@unipi.it}
\equalcont{These authors contributed equally to this work.}

\author[1,3]{\fnm{Salvatore} \sur{Filippone}}\email{salvatore.filippone@uniroma2.it}
\equalcont{These authors contributed equally to this work.}

\author[2]{\fnm{Stefano} \sur{Massei}}\email{stefano.massei@unipi.it}
\equalcont{These authors contributed equally to this work.}

\author[4]{\fnm{Stephen} \sur{Thomas}}\email{stephen.thomas2@amd.com}
\equalcont{These authors contributed equally to this work.}

\affil[1]{\orgdiv{Institute for Applied Computing ``Mauro Picone''}, \orgname{National Research Council of Italy}, \orgaddress{\street{Via Pietro Castellino, 111}, \city{Naples}, \postcode{80131}, \country{Italy}}}

\affil*[2]{\orgdiv{Department of Mathematics}, \orgname{University of Pisa}, \orgaddress{\street{Largo Bruno Pontecorvo, 5}, \city{Pisa}, \postcode{56127}, \country{Italy}}}

\affil[3]{\orgdiv{Department of Civil and Computer Engineering}, \orgname{University of Rome ``Tor Vergata''}, \orgaddress{\street{Via del Politecnico, 1}, \city{Rome}, \postcode{00133}, \country{Italy}}}

\affil[4]{\orgdiv{HPC and DC-GPU}, \orgname{Advanced Micro Devices}, \orgaddress{\city{Austin}, \state{TX}, \country{USA}}}


\abstract{In this paper, we explore polynomial accelerators that are well-suited for parallel computations, specifically as smoothers in Algebraic MultiGrid (AMG) preconditioners {for symmetric positive definite matrices}. These accelerators address a minimax problem, initially formulated in~\cite[Lottes, Numer. Lin. Alg. with Appl. 30(6), 2518 (2023)]{LottesNLAA2023}, aiming to achieve an optimal (or near-optimal) bound for a polynomial-dependent constant involved in the AMG V-cycle error bound, without requiring information about the matrices' spectra. Lottes focuses on Chebyshev polynomials of the 4\textsuperscript{th}-kind and defines the relevant recurrence formulas applicable to a general convergent basic smoother. In this paper, we demonstrate the efficacy of these accelerations for large-scale applications on modern GPU-accelerated supercomputers. Furthermore, we formulate a variant of the aforementioned minimax problem, which naturally leads to solutions relying on Chebyshev polynomials of the  1\textsuperscript{st}-kind as accelerators for a basic smoother. For all the polynomial accelerations, we describe efficient GPU kernels for their application and demonstrate their comparable effectiveness on standard benchmarks at very large scales.}

\keywords{AMG, parallel smoothers, Chebyshev polynomials, GPU}


\pacs[MSC Classification]{65F10, 65F08, 65N55, 65Y05}

\maketitle

\section{Introduction}\label{sec:introduction}

Algebraic MultiGrid (AMG) methods are optimal iterative solvers for a large class of linear systems, particularly effective for those arising from scalar elliptic isotropic Partial Differential Equations (PDEs), achieving near-constant iteration counts regardless of the problem size~\cite{MR2427040,MR3653855}. This property, known as \emph{algorithmic scalability}, combined with optimal linear-time complexity, makes AMG methods highly suitable for the exascale computing challenge~\cite{MR4072454}. 

Gauss-Seidel has traditionally been the preferred smoother for AMG, often implemented in a parallel hybrid variant using block-Jacobi iterations~\cite{MR2861652}. However, this approach may degrade smoothing properties and prove inefficient on modern multi/many-core processors. Consequently, research focuses on finding efficient smoothers for such environments~\cite{MR1985310,MR2861652,MR1885607,doi:10.1137/21M1430030,FLAIG2011846,10.1007/978-3-642-24025-6_18}{, and has also been extended to address reduction-based algebraic multigrid methods, in which the smoothers operate exclusively on the fine-grid points~\cite{MR2259545,MR3498145}, and to cover the use of accelerators like FPGAs~\cite{9835299}.} 
Polynomial smoothers offer an alternative, leveraging Sparse Matrix-Vector (SpMV)
products, which are well-optimized on contemporary hardware. This paper
deals with the usage of  polynomial smoothers within AMG to solve symmetric
positive definite (SPD) linear systems:
\begin{equation}
	A\mathbf{x}=\mathbf{b}, \ \ \text{with} \  A \in \R^{n \times n} \ \text{SPD and} \ \ \mathbf{x}, \mathbf{b} \in \R^n.
	\label{eq:sys1}
\end{equation}
The selection of polynomial smoothers, their degrees, and related parameters,
significantly impact AMG performance. Lottes~\cite{LottesNLAA2023} recently
provided convergence analysis for AMG's two-level and $V$-cycle formulations,
where error bounds depend on polynomial acceleration. This allows for
strategies optimizing the convergence constant for the $V$-cycle, a method
previously used in geometric multigrid for unsteady flow
problems~\cite{MR2787943, MR3695922,MR2995785,MR3254225} { and high-order continuous finite element discretizations~\cite{MR3367828,doi:10.1177/10943420231217221}}. Our focus is on
Chebyshev-type polynomial smoothers~\cite{MR1985310,LottesNLAA2023}, which
optimize the two-level energy-norm error propagation matrix. Additionally, we
aim to develop smoothers and optimization procedures {for SPD linear systems} independent of the
matrix spectrum, unlike those discussed
in~\cite{MR2861652,MR2896807,KVZ2012,MR3083519,MR3395379}.

\paragraph{Contribution} This paper extends Lottes' results by introducing a variant of the minimax problem to optimize the V-cycle error bound and employing Chebyshev polynomials of the 1\textsuperscript{st}-kind Chebyshev polynomials on a suitable interval $[a^*, 1]$. The left endpoint, $a^*$, is determined by solving a nonlinear equation that varies with the polynomial degree but is independent of the matrix's size. These polynomials achieve lower $V$-cycle error bounds for low-degree polynomials ($k \leq 5$) compared to Lottes' 4\textsuperscript{th}-kind Chebyshev polynomials and are comparable to the numerically-approximated optimal bounds~\cite{LottesNLAA2023}. We have developed efficient NVIDIA GPU kernels for all these polynomial accelerators within the Parallel Sparse Computation Toolkit (\texttt{PSCToolkit}) framework~\cite{softimpact}, demonstrating their comparable effectiveness on large-scale benchmarks.

\paragraph{Synopsis} The paper is organized as follows: Section~\ref{sec:method} outlines the construction of polynomial smoothers and our proposal, Section~\ref{sec:amgprec} details the AMG method construction using \texttt{AMG4PSBLAS}~\cite{MR4331965} from the \texttt{PSCToolkit} suite~\cite{softimpact}. Section~\ref{sec:results} discusses GPU implementation details and benchmark tests. Finally, Section~\ref{sec:conclusion} concludes and suggests future research directions.

\section{Polynomial Smoothers for AMG}\label{sec:method}
We start by briefly recalling the expression for a stationary iteration for the SPD $A$ in~\eqref{eq:sys1} 
\[
\mathbf{x}^{(k)}=\mathbf{x}^{(k-1)}+B\left(\mathbf{b}- A\mathbf{x}^{(k-1)}\right), \ \ k=1, 2, \ldots \ \ \text{given} \ \textbf{x}^{(0)} \in \mathbb{R}^n.
\]
The above equation represents a generic AMG method  
when the matrix $B$ is defined recursively by composing a coarse grid
correction together with a simple fixed point iteration. Specifically,  let
$A_l$ be a sequence of coarse matrices 
computed by the triple-matrix Galerkin product:
\[
A_{l+1}=P_l^TA_lP_l, \ \ l=0, \ldots, N_l-1,
\]
with $A_0=A$ and $P_l$ a sequence of prolongation matrices of size $n_l \times n_{l+1}$, with $n_{l+1} < n_l$
and $n_0=n$. Then we can take any iteration matrix $M_l$ that is an $A_l$-convergent smoother 
for $A_l$, i.e., $\|I-M_l^{-1}A_l\|_{A_l} < 1$, where $I$ is the identity matrix of size $n_l$ and 
$\| \cdot \|_{A_l}$ indicates the $A_l$-energy-norm, and construct $B$ as the multiplicative composition 
of the following error propagation matrices:
\begin{equation}
	\label{eq:amg}
	I-B_lA_l= \left(I-M_l^{-T}A_l\right)^{k}\left(I-P_lB_{l+1}P_l^TA_l\right)\left(I-M_l^{-1}A_l\right)^{k} \ \ \forall\, l< N_l, k \geq 1,
\end{equation}
where we are assuming that the coarsest $A_l$ is solved by a direct method. 

Rather than employing the smoothing iteration: 
\[
G = \left(I-M_l^{-1}A_l\right)^{k}, \quad k \geq 1,
\]
we consider polynomial iterations of the form:
\[
G = p_k(M_l^{-1}A_l), \qquad p_k(x) \in \Pi_{k}, 
\]
where the set of polynomials $\Pi_k$ is defined as:
\begin{equation}\label{eq:polspace}
	\Pi_k=\{p \,\text{polynomial of degree at most }k:\ p(0)=1,\quad |p(x)|\le 1\ \ \forall x\in[0, 1] \}.
\end{equation}
If we restrict the analysis to the case in which we employ only two levels, we can drop the $l$ index, and
let $P \in \R^{n \times n_c}$ be the prolongator from the fine to the coarse space and 
$A_c=P^TAP$ be the Galerkin coarse matrix. Then the error propagation matrix for the two-level 
symmetric $V$-cycle, when $k$ iterations of the smoother are applied before and after the 
coarse-space correction in~\eqref{eq:amg} can be compactly rewritten as:
\[
E=G^T(I-PA_c^{-1}P^TA)G.
\]
The following bound can be obtained:
\begin{equation}
	\label{boundTLE}
	\|E\|^2_A \leq \frac{C}{C+2k},
\end{equation}
where $C$ is the following approximation property constant:
\begin{equation}
	\label{approxC}
	C=\|A^{-1}-PA_c^{-1}P^T\|_{A,B}^2 = \sup_{\|u\|_B \leq 1} \|(A^{-1}-PA_c^{-1}P^T)u\|^2_A,
\end{equation}
that is an approximation property of the vectors in $\R^{n \times n}$ by
vectors from the subspace~$\R^{n_c \times n_c}$; see, e.g., \cite[Theorem~3.4]{MR0795945}. If we now go back to considering all the
levels, it follows that the resulting convergence estimate for~\eqref{eq:amg}
is given by the largest of~\eqref{boundTLE}
at the various levels~\cite[Theorem~4.5]{MR0685774}. The previous analysis
can be refined for the case of a generic polynomial smoother as follows.
\begin{proposition}[{\cite[Lemma~2]{LottesNLAA2023}}]\label{pro:lotte-proposition}
	Let the smoother iteration (on each level) be represented~by:
	\[
	G_l=p^l(M^{-1}A),
	\]
	where $p^l(x)$ is a polynomial satisfying $p^l(0)=1$ and $|p^l(x)|<1$ for $0 < x \leq 1$, possibly different 
	on each level. Then the $V$-cycle error propagation matrix satisfies the following bound:
	\begin{equation}
		\label{boundopt}
		\|E\|^2_A \leq \max_l \frac{C_l}{C_l+\gamma_l^{-1}},
	\end{equation}
	where $C_l$ is the approximation property constant in~\eqref{approxC} at the level $l$ and
	\[
	\gamma_l= \sup_{0 < \lambda \leq 1} \frac{\lambda p^l(\lambda)^2}{1-p^l(\lambda)^2}.
	\]
\end{proposition}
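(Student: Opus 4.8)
The plan is to reduce the $V$-cycle bound, level by level, to a single‑level estimate, and then to obtain that estimate by pushing everything down to one scalar inequality for the polynomial $p^l$ on $(0,1]$; the level‑by‑level reduction mirrors the classical argument~\cite[Theorem~4.5]{MR0685774}, which uses a smoother only through its single‑level bound, and I would supply the short recursion that realises it in the present setting.

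I would work, on each level, in the $A_l$-inner product $\langle x,y\rangle_{A_l}=\langle A_lx,y\rangle$. In it $\pi_l:=P_lA_{l+1}^{-1}P_l^TA_l$ is the orthogonal projection onto the range of $P_l$; the operator $\mathcal A_l:=M^{-1}A_l$ entering $G_l=p^l(\mathcal A_l)$ is, after symmetrising the basic smoother and scaling (a non‑symmetric $M$ being replaced by its symmetrisation, as in~\cite{LottesNLAA2023}), self‑adjoint with spectrum in $(0,1]$; the post‑smoother $G_l^*$ in~\eqref{eq:amg} is its $A_l$-adjoint; and the error operators $G_l^*(I-\pi_l)G_l$ and $E=I-B_0A_0$ are self‑adjoint and positive semidefinite in the relevant energy inner product, so their energy norms equal the supremum of $\langle\,\cdot\,v,v\rangle/\|v\|^2$. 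Diagonalising $\mathcal A_l$ in an $A_l$-orthonormal eigenbasis $\{\phi_i\}$ with $\mathcal A_l\phi_i=\lambda_i\phi_i$, one has $G_l\phi_i=p^l(\lambda_i)\phi_i$. The two facts I would invoke are the trivial bound $\|(I-\pi_l)w\|_{A_l}\le\|w\|_{A_l}$ and the approximation property, which on setting $u=A_lw$ in~\eqref{approxC} (with the weight there identified, as is customary, with the smoother $\mathcal A_l$) reads $\|(I-\pi_l)w\|_{A_l}^2\le C_l\,\langle\mathcal A_lw,w\rangle_{A_l}$; one also notes that $\gamma_l<\infty$, since $1-p^l(\lambda)^2>0$ on $(0,1]$ and, because $p^l(0)=1$, the ratio stays bounded as $\lambda\to0^+$.

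For the single‑level estimate, take $v=\sum_ic_i\phi_i$, set $w=G_lv$, so that $\|w\|_{A_l}^2=\sum_ip^l(\lambda_i)^2c_i^2$ and $\langle\mathcal A_lw,w\rangle_{A_l}=\sum_i\lambda_ip^l(\lambda_i)^2c_i^2$, and write $N=\langle G_l^*(I-\pi_l)G_lv,v\rangle_{A_l}=\|(I-\pi_l)w\|_{A_l}^2$. Since $N\le\|w\|_{A_l}^2$ and $N\le C_l\langle\mathcal A_lw,w\rangle_{A_l}$, adding $C_l$ times the former to $\gamma_l^{-1}$ times the latter gives
\[
\bigl(C_l+\gamma_l^{-1}\bigr)\,N\;\le\;C_l\sum_i\bigl(p^l(\lambda_i)^2+\gamma_l^{-1}\lambda_i\,p^l(\lambda_i)^2\bigr)c_i^2 .
\]
It is therefore enough that $p^l(\lambda)^2\bigl(1+\gamma_l^{-1}\lambda\bigr)\le1$ for all $\lambda\in(0,1]$, i.e.\ $\lambda\,p^l(\lambda)^2\le\gamma_l\,\bigl(1-p^l(\lambda)^2\bigr)$, which is precisely the definition of $\gamma_l$. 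Hence $N\le\frac{C_l}{C_l+\gamma_l^{-1}}\|v\|_{A_l}^2$, that is $\|G_l^*(I-\pi_l)G_l\|_{A_l}\le\frac{C_l}{C_l+\gamma_l^{-1}}$; for $p^l(\lambda)=(1-\lambda)^k$ one checks $\gamma_l=1/(2k)$, recovering~\eqref{boundTLE}.

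To pass to the $V$-cycle I would induct downward over the levels, the coarsest solve being exact so that $\|I-B_{N_l}A_{N_l}\|=0$. Assuming $0\preceq I-B_{l+1}A_{l+1}\preceq\delta_{l+1}I$ in the $A_{l+1}$-order and writing $I-P_lB_{l+1}P_l^TA_l=(I-\pi_l)+P_l(A_{l+1}^{-1}-B_{l+1})P_l^TA_l$, one gets for $w=G_lv$
\[
\langle(I-B_lA_l)v,v\rangle_{A_l}\;\le\;\delta_{l+1}\,\|w\|_{A_l}^2+(1-\delta_{l+1})\,\|(I-\pi_l)w\|_{A_l}^2 ;
\]
now bound $\|(I-\pi_l)w\|_{A_l}^2$ by the convex combination $(1-s)\|w\|_{A_l}^2+s\,C_l\langle\mathcal A_lw,w\rangle_{A_l}$ and choose $s$ so that, after combining with the split above, the coefficient of $\|w\|_{A_l}^2$ equals $\delta_l:=\max\!\bigl(\delta_{l+1},\tfrac{C_l}{C_l+\gamma_l^{-1}}\bigr)$; using $p^l(\lambda)^2\le\gamma_l/(\gamma_l+\lambda)$ once more, the right‑hand side collapses to $\delta_l\,\|v\|_{A_l}^2$, the only requirement being $\delta_l\ge\frac{C_l}{C_l+\gamma_l^{-1}}$, which holds by construction. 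Iterating from the coarsest level gives $\|I-B_0A_0\|_{A_0}\le\max_l\frac{C_l}{C_l+\gamma_l^{-1}}$, hence~\eqref{boundopt} (the square there being harmless, the right‑hand side being below~$1$). I expect the main obstacle to be the single‑level combination — the weights $C_l$, $\gamma_l^{-1}$ (and, in the induction, $s$) must be tuned so that exactly the inequality defining $\gamma_l$ survives, which is what pins down the form $C_l/(C_l+\gamma_l^{-1})$ — together with, for a general convergent basic smoother, performing the symmetrisation without altering the spectral interval $(0,1]$; checking that each $B_l$ remains symmetric and $\preceq A_l^{-1}$ along the induction is routine.
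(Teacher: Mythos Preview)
The paper does not prove this proposition at all: it is stated with a citation to \cite[Lemma~2]{LottesNLAA2023} and used as a black box for the subsequent analysis, so there is no proof here for your attempt to be compared against.

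That said, your sketch is essentially a correct reconstruction of the standard argument. The single-level part is clean: combining $N\le\|w\|_{A_l}^2$ and $N\le C_l\langle\mathcal A_lw,w\rangle_{A_l}$ with weights $C_l$ and $\gamma_l^{-1}$ and then invoking $p^l(\lambda)^2(1+\gamma_l^{-1}\lambda)\le1$ is exactly the right reduction, and your check that this recovers~\eqref{boundTLE} for $p^l(\lambda)=(1-\lambda)^k$ is a good sanity test. The $V$-cycle induction also goes through as you outline it: with your choice $s=(1-\delta_l)/(1-\delta_{l+1})$ the right-hand side becomes $\delta_l\|w\|_{A_l}^2+(1-\delta_l)C_l\langle\mathcal A_lw,w\rangle_{A_l}$, and the required pointwise inequality $p^l(\lambda)^2[\delta_l+(1-\delta_l)C_l\lambda]\le\delta_l$ reduces precisely to $\delta_l\ge C_l/(C_l+\gamma_l^{-1})$. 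An equivalent and slightly slicker route is to observe directly that $\|(I-\pi_l)w\|_{A_l}^2\le C_l\gamma_l\bigl(\|v\|_{A_l}^2-\|w\|_{A_l}^2\bigr)$, which is the same as $\|(I-\pi_l)w\|_{A_l}^2\le\eta_l\bigl(\|v\|_{A_l}^2-\|\pi_lw\|_{A_l}^2\bigr)$ with $\eta_l=C_l/(C_l+\gamma_l^{-1})$; from this the induction $a+\delta_{l+1}b\le\max(\eta_l,\delta_{l+1})\|v\|_{A_l}^2$ is immediate in both cases without having to tune an auxiliary parameter. Your closing remark about the square in~\eqref{boundopt} is fine: you bound $\|E\|_A$ by the maximum, and since that maximum is below~$1$, the bound on $\|E\|_A^2$ follows.
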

The analysis and construction of the smoothers that we deal with in this
paper  is therefore based on the search for polynomials attempting  to 
optimize the above convergence bound. 

Having observed this behavior on the levels and focusing on the
choice of the smoother, we now drop the level indication--the $l$
subscript--and generally assume that we are using the same smoother at all
levels of the $V$-cycle~\eqref{eq:amg}, i.e., we set $p^l(x) = p_k(x) \in
\Pi_k[x]$ for all $l$. Our main interest for these polynomial
smoothers {is that of defining methods whose application rely on the basic SpMV product as main operation and that does not require \emph{a-priori} knowledge of the spectral interval of $A$}, therefore, as basic method we focus on the $\ell_1-$Jacobi smoother:
\begin{equation}
	\label{l1smooth}
	G = M^{-1}N= I - M^{-1}A,\quad \begin{array}{c}
		M=\operatorname{diag}(M_{i})_{i=1, \ldots, n},  \\
		M_{i}=a_{ii}+\sum_{j \neq i} |a_{ij}|, 
	\end{array}\quad A = M - N.
\end{equation}
Polynomial acceleration of such methods is a classic topic~\cite{MR0145679}
about which we report again only briefly the idea. For any fixed-point
iterative method of the form
\[
\mathbf{x}^{(k+1)} = \mathbf{x}^{(k)} + M^{-1}(\mathbf{b} - A \mathbf{x}^{(k)})  = M^{-1}(N \mathbf{x}^{(k)} + \mathbf{b}) , \qquad k \geq 0, \qquad A = M - N,
\]
we have seen that the error can be expressed as
\[
\mathbf{e}^{(k+1)} = \mathbf{x}^{(k+1)} - \mathbf{x} = M^{-1}N ( \mathbf{x}^{(k)} - \mathbf{x} ) = M^{-1}N \mathbf{e}^{(k)}.
\]
We would like to construct an accelerated sequence based upon 
the linear combination
\begin{equation}\label{eq:acceleration}
	\mathbf{y}^{(k)} = \sum_{j=0}^{k} \nu_{j,k} \mathbf{x}^{(j)}, \qquad \sum_{j=0}^{k} \nu_{j,k} = 1,
\end{equation}
that is used in conjunction with the error equation; this would give us
\begin{equation*}
	\begin{split}
		\sum_{j=0}^{k} \nu_{j,k} ( \mathbf{x}^{(j)} - \mathbf{x} ) & =  \sum_{j=0}^{k} \nu_{j,k} (M^{-1} N)^j \mathbf{e}^{(0)} = \tilde p_k(M^{-1} N) \mathbf{e}^{(0)} \\ & = \tilde p_k(I - M^{-1} A) \mathbf{e}^{(0)} = p_k(M^{-1} A) \mathbf{e}^{(0)}.
	\end{split}
\end{equation*}
However this approach is not convenient on the implementation side, as it
would require storing and summing $k$ vectors for each application. 
The most efficient procedure takes advantage of the short-term recurrences of
the prescribed family of polynomials; see Section~\ref{sec:how-to-lottes}
and Section~\ref{sec:how-to-cheby}.

To optimize the bound in Proposition~\ref{pro:lotte-proposition} we then have
to solve the minimax problem
\begin{equation}\label{eq:prob1}
	\gamma_k := \min_{p(x)\in \Pi_k}\max_{x\in(0, 1]} \left|\frac{xp(x)^2}{1-p(x)^2}\right|
\end{equation}
with the set $\Pi_k$ as in~\eqref{eq:polspace}. {Assuming only that the multigrid method is applied to an SPD matrix, the optimization procedure can be carried out offline. The parameters describing the optimal polynomials for various degrees are precomputed and tabulated. Unlike classical Chebyshev acceleration, this approach does not require recomputing the parameters when the matrix changes~\cite{MR1985310}. In particular, there is no need to estimate the largest eigenvalue or determine a tolerance factor to identify high-frequency components; see~\cite[\S 4.1]{MR2861652} for a discussion regarding the choice of tolerances.}

This is indeed different from the  norm of the residual 
$\|\mathbf{r}\|_2$ or the energy norm of the  error~$\|\mathbf{e}\|_A$ that are optimized in the 
derivation of  (preconditioned) Krylov  methods for symmetric matrices $A$,
such as MINRES and the conjugate gradient methods.

In the remainder of the section we will focus on determining polynomials that
(approximately)  solve~\eqref{eq:prob1} and building the corresponding 
application routines.


%


\subsection{4\textsuperscript{th}-kind Chebyshev Polynomials}\label{sec:how-to-lottes}

In order to make a comparison, we describe the approach discussed
in~\cite{LottesNLAA2023} based upon the 4\textsuperscript{th}-kind Chebyshev
polynomials: 
\[
W_k(x) = \frac{\cos(k+\nicefrac{1}{2})\theta}{\cos(\nicefrac{\theta}{2})},\; n \geq 0,\; x = \cos(\theta),
\]
whose choice produces the following polynomial smoother, expressed as a
simple three-terms recurrence:
\begin{eqnarray}
	\label{polsmoother} 
 &\begin{array}{l}
	\text{Input: }  \mathbf{x}^{(0)},   \\
    \text{Initialize: }  \mathbf{z}^{(0)}  = \mathbf{0},  \quad
	\mathbf{r}^{(0)}  = \mathbf{b}-A \mathbf{x}^{(0)},   \\
 \end{array} \nonumber \\
     & \begin{cases} 
    \mathbf{z}^{(k)}  =   \frac{2k-3}{2k+1} \mathbf{z}^{(k-1)}+\frac{8k-4}{2k+1}\frac{1}{\rho(M^{-1}A)}M^{-1}\mathbf{r}^{(k-1)}\\
	\mathbf{x}^{(k)} =  \mathbf{x}^{(k-1)}+\mathbf{z}^{(k)}, \\ 
	\mathbf{r}^{(k)}  = \mathbf{r}^{(k-1)} - A \mathbf{z}^{(k)}. 
	\end{cases} k \geq 1
\end{eqnarray}
If $M$ is an {SPD} smoother, then using
Proposition~\ref{pro:lotte-proposition}, the polynomial
smoother~\eqref{polsmoother} produces an error propagation matrix of the
symmetric multilevel $V$-cycle~\eqref{eq:amg} obeying the following bound:
\begin{equation}\label{eq:lottebound-4cheby}
	\|E\|^2_A \leq \max_{l = 1,\ldots,N_l-1} \frac{C_l}{C_l+ \nicefrac{4}{3}k_l(k_l+1)},
\end{equation}
where $C_l$ is the approximation property constant and $k_l$ is the
polynomial degree at the level $l$.

The other approach employed in~\cite{LottesNLAA2023} is to solve numerically
for the optimal polynomial $p(x)$ in~\eqref{eq:prob1} by applying Newton's
method to a system of nonlinear equations expressing the equi-oscillation
property, and then write:
\[
p_k(x) = \sum_{j=0}^{k} \frac{\beta_{j,k} - \beta_{j+1,k}}{2j+1} W_j(1 - 2x), \qquad \beta_{0,k} = 1, \quad \beta_{k+1,k} = 0 \qquad \forall k \geq 0.
\]
By this choice we can modify the update of the $\mathbf{x}^{(k)}$ vector
in~\eqref{polsmoother} with
\begin{eqnarray}
	\label{polsmootheropt}
	\mathbf{x}^{(k)} & = & \mathbf{x}^{(k-1)}+\beta_k \mathbf{z}^{(k)}, 
\end{eqnarray}
taking care to always compute the residual with respect to the original
solution, i.e., 
{the update of $\mathbf{r}^{k}$ is computed again as in~\eqref{polsmoother} using
the $\mathbf{z}^{(k)}$ vector via the relation $\mathbf{r}^{(k)}  = \mathbf{r}^{(k-1)} - A \mathbf{z}^{(k)}$,
see~\cite[{\S 4, P. 10}]{LottesNLAA2023} for further comments.}
In Fig.~\ref{fig:lotte_polynomials} we report on the two different choices of
polynomials. The $\beta_k$ coefficients are obtained by the Gaussian
quadrature rule applied to the orthogonality relation for the
4\textsuperscript{th}-kind Chebyshev polynomials $W_j$.
\begin{figure}[htbp]
	\centering
	\subfloat[Scaled and shifted 4\textsuperscript{th}-kind Chebyshev polynomials]{\input{4thkindcheb}}\hspace{1em}
	\subfloat[Optimized polynomial from~\cite{LottesNLAA2023} corresponding to the update rule in~\eqref{polsmootheropt}]{\input{lotteoptimal}}
	\caption{Polynomials for accelerating the fixed point iterative method to
		use as a smoother from~\cite{LottesNLAA2023}. The left panel contains the
		standard 4\textsuperscript{th}-kind Chebyshev polynomials, the right
		panel has the polynomials implicitly used in the iteration~\eqref{polsmootheropt}, where the coefficients $\beta_k$ are obtained by {numerically solving the optimization problem for the polynomial $p^l$ in Proposition~\ref{pro:lotte-proposition} with an alternating minimization approach described in \cite[Appendix B]{LottesNLAA2023}.}}
	\label{fig:lotte_polynomials}
\end{figure}

We observe that application of the above smoother, for a polynomial degree
$k$, requires $k$ applications of the basic smoother to the residual vector,
two vector operations (\texttt{axpy}) for updating vectors $z$ and $x$, and
one sparse matrix-vector product (\texttt{SpMV}) plus another \texttt{axpy}
for updating the residual. Therefore, the smoother application requires $k$
\texttt{axpy} operations more than $k$ iterations of the basic smoother; we
will deal with the efficient GPU implementation of the procedure in
Section~\ref{sec:amgprec}.

\subsection{Accelerated Sequence with Optimal 1\textsuperscript{st}-kind Chebyshev Polynomials}\label{sec:how-to-cheby}
We describe here the procedure for applying the 1\textsuperscript{st}-kind
Chebyshev polynomial acceleration that we propose as smoother. Let us recall
that  the Chebyshev polynomials of the 1\textsuperscript{st}-kind are the
family of orthogonal polynomials on the interval $[-1, 1]$ with respect to
the inner product $\langle h,g\rangle=\int_{-1}^{1}h(x)g(x)\frac{dx}{\sqrt{1-x^2}}$.
Let $\tau_k(x)$ denote the polynomial of degree $k$; we have $\tau_0(x)=1$,
$\tau_1(x)=2x$, and higher degree polynomials can be computed via the
recurrence relation $\tau_{k+1}(x)=2x\cdot \tau_k(x)-\tau_{k-1}(x)$.

Chebyshev polynomials can be generalized  to an arbitrary compact real
interval $[a,b]\subset \mathbb R$, in the sense of constructing the
polynomials $\tau_k^{[a,b]}(x)$ that are monic and orthogonal  with respect
to the inner product $\langle
h,g\rangle=\int_{a}^{b}h(x)g(x)\frac{dx}{\sqrt{(a-x)(b-x)}}$. This can be
done by composing $\tau_k$ with a linear map from $[a,b]$ to $[-1, 1]$ and by
applying a scaling:
\[ 
\tau_k^{[a,b]}(x)=\tau_k\left(\frac{b+a}{b-a}-\frac{2x}{b-a}\right) \Big{/}
\tau_k\left(\frac{b+a}{b-a}\right). 
\] 
We remark that the scaling is not necessary for the orthogonality but it
ensures the normalization property $\tau_k^{[a, b]}(0)=1$. The corresponding
recurrence relation is then given by
\begin{equation}\label{eq:recurrence-relation}
	\begin{array}{ll}
		\tau_0^{[a,b]}(x) & = 1,\\
		\tau_1^{[a,b]}(x) & = 1-\frac{2x}{b+a}, \\
		\tau_{k+1}^{[a,b]}(x)& = \frac{\tau_k(\frac{b+a}{b-a})}{\tau_{k+1}(\frac{b+a}{b-a})}\left[2 \left(\frac{b+a}{b-a}-\frac{2x}{b-a}\right)\,\tau_{k}^{[a,b]}(x) - \frac{\tau_{k-1}(\frac{b+a}{b-a})}{\tau_{k}(\frac{b+a}{b-a})}\tau_{k-1}^{[a,b]}(x)\right].   
	\end{array}
\end{equation}
Since we plan to optimize over the $a$ parameter, having $b$ set to $1$, we pose
\begin{equation}\label{eq:optimal_parameters}
	\theta = \frac{1 + a}{2}, \quad \delta = \frac{1 - a}{2},
\end{equation}
and rewrite~\eqref{eq:recurrence-relation} over a centered interval as 
\begin{equation}\label{eq:recurrence-relation-shifted}
	\begin{array}{ll}
		\tau_0^{[a,1]} (x)= &\; 1, \\
		\tau_1^{[a,1]}(x) = &\; 1 - \frac{x}{\theta}, \\
		\tau_{k+1}^{[a,1]}(x) = &\;  \frac{\sigma_k}{\sigma_{k+1}} \left[ 2 \frac{\theta - x}{\delta} \tau_{k}^{[a,1]}(x) - \frac{\sigma_{k-1}}{\sigma_{k}} \tau_{k-1}^{[a,1]}(x)  \right], \qquad k \geq 1. 
	\end{array}
\end{equation}
with the normalization constants given by
\begingroup
\allowdisplaybreaks
\begin{eqnarray*}
	\sigma_{0} & = &\; 1,\\
	\sigma_{1} & = &\; \frac{\theta}{\delta},\\
	\sigma_{k+1} & = &\; 2 \frac{\theta}{\delta} \sigma_k - \sigma_{k-1}, \qquad k \geq 1.
\end{eqnarray*}
\endgroup
The Chebyshev acceleration for the fixed point iteration is then summarized in 
the following recurrence:
\begingroup
\allowdisplaybreaks
\begin{eqnarray}
	\label{polsmootheropt1kind}
	\mathbf{r}^{(0)} & = & \frac{1}{\rho(M^{-1}A)}M^{-1}(\mathbf{b}-A\mathbf{x}^{(0)}) \nonumber \\
	\mathbf{z}^{(0)}  & = & \frac{2}{1+a}\mathbf{r}^{(0)}, \ \rho_0=\frac{1-a}{1+a} \nonumber\\
	\rho_k &=& \left( \frac{2(1+a)}{1-a}-\rho_{k-1} \right)^{-1} \nonumber \\
	\mathbf{x}^{(k)} & = & \mathbf{x}^{(k-1)}+ \mathbf{z}^{(k-1)} \\ 
	\mathbf{r}^{(k)} & = & \mathbf{r}^{(k-1)}- \frac{1}{\rho(M^{-1}A)}M^{-1}A\mathbf{z}^{(k-1)}  \nonumber \\ 
	\mathbf{z}^{(k)} &=&  \rho_k \rho_{k-1} \mathbf{z}^{(k-1)}+\frac{4\rho_k}{(1-a)}\mathbf{r}^{(k)} \nonumber 
\end{eqnarray}
\endgroup
The above acceleration guarantees that the error at step $k$ behaves as 
\[\mathbf{e}^{(k)} = \mathbf{x}^{(k)} - \mathbf{x} = \tau_{k}^{[a,1]}(\rho^{-1}(M^{-1}A)\:M^{-1}A) \mathbf{e}^{(0)},\] 
for any value of the $a$ parameter. This is the analogue of the
formulas~\eqref{polsmootheropt} for the polynomial acceleration based on the
Chebyshev polynomials of the 4\textsuperscript{th}-kind for the smoother
in~\eqref{l1smooth}. 

We observe that the application of the above smoother, for a polynomial of
degree $k$, has the same cost of the smoother based on the 
4\textsuperscript{th}-kind Chebyshev polynomial, as it requires $k$
applications of the basic smoother, $k$ \texttt{SpMV} for updating the
residual vector, and two vector operations (\texttt{axpy}) for updating
$d$ and $x$; again, we postpone the discussion of an  efficient GPU
implementation of  the procedure to Section~\ref{sec:amgprec}. In the next
Section~\ref{sec:fixing-a}, we will address the choice of $a$ according to
the convergence analysis outlined in Proposition~\ref{pro:lotte-proposition}.

\subsection{Revisiting the Polynomial Optimization Problem}\label{sec:fixing-a}
Herein, we propose an alternative family of polynomials which provides a quasi-optimal solution for the 
minimax problem in~\eqref{boundopt}. We start by rewriting problem~\eqref{eq:prob1}~as
$$
\gamma_k = \min_{p(x)\in \Pi_k}\max_{x\in(0, 1]} x\left|1-\frac{1}{1-p(x)^2}\right|,
$$
i.e., we can consider this as a search for the most favorable minimax approximation 
relative to an unconventional weight, targeting the zero function across the 
interval \( (0, 1] \) over the set of monic polynomials of degree bounded by $k$.
The latter constraint is indeed what makes the issue non trivial as each candidate 
solution has to pass through $1$ at $x=0$. 

The core idea of this section is to analyze the values of the objective function when restricting to the set of scaled 1\textsuperscript{st}-kind Chebyshev polynomials. The motivation for this is that those polynomials would provide the optimal solution whenever we replace the weight function with $1$ and the interval $(0,1]$ with $[a,1]$, for any $a\in(0,1)$.

\subsubsection{1\textsuperscript{st}-kind Chebyshev Polynomials: Basic Properties}

We report here some well known results about Chebyshev polynomials that will be needed in the subsequent analysis. We
refer to~\cite[Chapter~1]{MR1937591} for the basic properties and functional identities, while we point to~\cite[Chapter~3]{MR1937591} 
for the minimax approximation properties.

\begin{itemize}
	\item[$(i)$] For $x\in\mathbb R$, $\tau_k(x)$ verifies the following identity
	$$
	\tau_k(x) = \frac{1}{2}\left[(x+\sqrt{x^2-1})^k + (x-\sqrt{x^2-1})^k\right].$$
	\item[$(ii)$]The polynomial $\tau_k^{[a, b]}(x)$ is the monic polynomial of degree at most $k$ that deviates least from $0$ on $[a,b]$. More precisely, $\tau_k^{[a, b]}(x)$ solves the optimization problem
	$$
	\tau_k^{[a,b]}(x)=\arg\min_{\{\mathrm{deg}(p)=k,\ p(0)=1\}} \max_{x\in[a, b]} |p(x)|,
	$$
	and the associated optimal value verifies
	$$
	\max_{x\in[a, b]} |\tau_k^{[a,b]}(x)| = 2 \left[\left(\frac{\sqrt{\kappa}-1}{\sqrt{\kappa}+1}\right)^k+ \left(\frac{\sqrt{\kappa}+1}{\sqrt{\kappa}-1}\right)^k\right]^{-1} \leq 2 \left(\frac{\sqrt{\kappa}-1}{\sqrt{\kappa}+1}\right)^{k},
	$$
	where $\kappa=\frac ba$.
	\item[$(iii)$] The zeros of $\tau_k^{[a,b]}$, and of $\frac{d}{dx}(\tau_k^{[a,b]})$ are inside $[a,b]$. Combining this with $\tau_k^{[a, b]}(0)=1$, and $|\tau_k^{[a,b]}|\leq 1$ in $[a,b]$, implies that $\tau_k^{[a, b]}$ is decreasing in $[0, a]$. In particular, when $b=1$ we have $\tau_k^{[a,1]}\in\Pi_k$.
\end{itemize}
\subsubsection{Restricting the Problem to the 1\textsuperscript{st}-kind Chebyshev Polynomials of Degree \texorpdfstring{$k$}{k}} 
To get a computable upper bound for the optimal value
$\gamma_k$ of \eqref{eq:prob1}, we restrict the set  
of feasible polynomials to $\mathcal C_k:=\{\tau_k^{[a, 1]},\ a\in [0, 1) \}\subset \Pi_k$ and we consider
\begin{equation}\label{eq:lambdak}
	\Lambda_k:=\min_{p(x)\in \mathcal C_k}\max_{x\in(0, 1]} x\left|1-\frac{1}{1-p(x)^2}\right|= \min_{a\in [0,1)}\max_{x\in(0, 1]} x\left|1-\frac{1}{1-\tau_k^{[a,1]}(x)^2}\right|.
\end{equation}
Since the Chebyshev polynomial $\tau_k^{[a,1]}(x)$ has very different behaviours inside and outside the interval $[a,1]$ it is convenient to divide the analysis of the minimax problem in two parts: one related to the half open interval $(0,a]$ and the other to the closed interval $[a,1]$. Formally, we use that
\begin{align}\label{eq:max2}\sup_{x\in (0, 1]}\hspace{-0.5em} x \left|1-\frac{1}{1-\tau_k^{[a,1]}(x)^2}\right|
	\hspace{-0.1em}=\hspace{-0.1em} 
	\max\hspace{-0.2em}\left\{\sup_{x\in (0, a]} \frac{x\tau_k^{[a, 1]}(x)^2}{1-\tau_k^{[a, 1]}(x)^2}, \max_{x\in [a, 1]} \frac{x\tau_k^{[a, 1]}(x)^2}{1-\tau_k^{[a, 1]}(x)^2}\right\}\hspace{-0.3em}.
\end{align}

The first step is showing that, whenever we fix a polynomial in $\mathcal C_k$, the objective function in the max problem is positive and monotonically decreasing on $(0,a]$; since the proof of this is rather technical, it is moved to Appendix~\ref{appendix}.
\begin{lemma}\label{lem:monotonicity}
	The function $f(x):=\frac{x\tau_k^{[a, 1]}(x)^2}{1-\tau_k^{[a, 1]}(x)^2}$ is positive and monotonically decreasing on $(0,a]$ for any $k\in\mathbb N$, and for any $a\in(0, 1)$.
\end{lemma}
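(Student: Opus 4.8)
The statement has two parts — positivity and monotone decrease — and I would dispatch them separately, the first being immediate and the second requiring a single genuinely non-trivial ingredient.

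For positivity, I would invoke item~$(iii)$: $\tau_k^{[a,1]}$ is decreasing on $[0,a]$, with $\tau_k^{[a,1]}(0)=1$ and $\tau_k^{[a,1]}(a)=1\big/\tau_k\!\big(\tfrac{1+a}{1-a}\big)\in(0,1)$, the last inclusion because $\tfrac{1+a}{1-a}>1$, $\tau_k(1)=1$, and $\tau_k$ is increasing on $[1,\infty)$. Hence $0<\tau_k^{[a,1]}(x)<1$ for every $x\in(0,a]$, so $1-\tau_k^{[a,1]}(x)^2>0$ and $f(x)>0$ there.

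For monotonicity the plan is to pass to a logarithmic variable. Set $\psi(x):=-\log\tau_k^{[a,1]}(x)$, which is smooth, nonnegative and strictly increasing on $[0,a]$, with $\psi(0)=0$. Since $\tau_k^{[a,1]}(x)^2=e^{-2\psi(x)}$, one gets the compact form $f(x)=x\big/(e^{2\psi(x)}-1)$, and differentiating and clearing the positive factor $e^{2\psi(x)}$ shows that $f'(x)<0$ on $(0,a]$ is equivalent to
\[
1-e^{-2\psi(x)}<2\,x\,\psi'(x),\qquad x\in(0,a].
\]
I would then split this into $1-e^{-2\psi(x)}<2\psi(x)$, which is the elementary bound $1-e^{-t}<t$ for $t>0$, and $\psi(x)\le x\,\psi'(x)$; chaining the two closes the estimate. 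The second inequality follows at once if $\psi$ is convex on $[0,a]$, because then the tangent line to $\psi$ at $x$ lies below the graph, so $0=\psi(0)\ge\psi(x)-x\psi'(x)$.

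The heart of the matter is therefore the convexity of $\psi$, and this is where the only real work lies. Writing $\psi(x)=\log\tau_k\!\big(\tfrac{1+a}{1-a}\big)-\log\tau_k\big(c(x)\big)$ with $c(x)=\tfrac{1+a}{1-a}-\tfrac{2x}{1-a}$ affine, convexity of $\psi$ in $x$ is equivalent to concavity of $c\mapsto\log\tau_k(c)$ on $[1,c_0]$, $c_0:=\tfrac{1+a}{1-a}$, i.e. to $\tau_k(c)\tau_k''(c)-\tau_k'(c)^2\le 0$ there. Substituting $c=\cosh\phi$ and using $\tau_k(\cosh\phi)=\cosh(k\phi)$ and $\tau_k'(\cosh\phi)=k\sinh(k\phi)/\sinh\phi$ (both from item~$(i)$), a short computation gives
\[
\tau_k(c)\tau_k''(c)-\tau_k'(c)^2=\frac{k}{2\sinh^3\phi}\Big(2k\sinh\phi-\sinh(2k\phi)\cosh\phi\Big)=-\frac{k\cosh\phi}{2\sinh^3\phi}\Big(\sinh(2k\phi)-\frac{2k\sinh\phi}{\cosh\phi}\Big),
\]
so the whole lemma reduces to the scalar inequality $\sinh(2k\phi)>2k\tanh\phi$ for $\phi>0$. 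This last fact is elementary: both sides vanish at $\phi=0$ and the derivative of the difference is $2k\cosh(2k\phi)-2k/\cosh^2\phi\ge 2k-2k=0$, strictly positive for $\phi>0$. Hence $\log\tau_k$ is strictly concave on $[1,c_0]$ (the endpoint $c=1$ covered by continuity), $\psi$ is convex on $[0,a]$, and $f$ is strictly decreasing on $(0,a]$.

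I expect the convexity/log-concavity step to be the main obstacle; everything else is bookkeeping. A more computational alternative is to substitute $\tau_k^{[a,1]}(x)=\tau_k(c)/\tau_k(c_0)$ directly into the equivalent inequality $-2x(\tau_k^{[a,1]})'(x)>\tau_k^{[a,1]}(x)\big(1-\tau_k^{[a,1]}(x)^2\big)$, reducing it to $2(c_0-c)\,\tau_k'(c)\,\tau_k(c_0)^2>\tau_k(c)\big(\tau_k(c_0)^2-\tau_k(c)^2\big)$ and then attacking this via the hyperbolic substitution and product-to-sum identities; however, that inequality is tight to first order at both endpoints $c=1$ and $c=c_0$, so coarse bounds fail and the bookkeeping is heavier, which is why I would favour the log-concavity route above.
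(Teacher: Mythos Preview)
Your proof is correct, but it proceeds along a different route from the paper's. The paper factors $f=f_1f_2$ with $f_1(x)=\dfrac{x\,\tau_k^{[a,1]}(x)}{1-\tau_k^{[a,1]}(x)}$ and $f_2(x)=\dfrac{\tau_k^{[a,1]}(x)}{1+\tau_k^{[a,1]}(x)}$, notes that $f_2$ is trivially decreasing, and then differentiates $f_1$ twice to reduce the problem to $p''p-(p')^2<0$ for $p(x)=\tau_k\!\big(\tfrac{1+a-2x}{1-a}\big)$; this log-concavity is obtained by writing $\log p=\log c_k+\sum_j\log(x_j-x)$ as a sum of concave terms over the real roots $x_j\in(a,1)$. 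You instead avoid the factorization entirely, pass to $\psi=-\log\tau_k^{[a,1]}$ so that $f(x)=x/(e^{2\psi(x)}-1)$, and close via the chain $1-e^{-2\psi}<2\psi\le 2x\psi'$, the second step coming from convexity of $\psi$ together with $\psi(0)=0$. Both arguments ultimately pivot on the same log-concavity fact, but you establish it through the hyperbolic parametrisation $\tau_k(\cosh\phi)=\cosh(k\phi)$ from item~$(i)$ and the scalar inequality $\sinh(2k\phi)>2k\tanh\phi$, whereas the paper relies only on the root location from item~$(iii)$. Your route is tighter and the splitting $1-e^{-2\psi}<2\psi\le 2x\psi'$ is cleaner than the paper's two rounds of differentiation; on the other hand, the paper's root-based concavity argument uses less specific information about $\tau_k$.
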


Let $c_j^{(a,k)}$ denote the coefficient of degree $j$ of $\tau_k^{[a,1]}$; in view of Lemma~\ref{lem:monotonicity}, we have	
\begin{align*}	
	\sup_{x\in (0, a]} \frac{x\tau_k^{[a, 1]}(x)^2}{1-\tau_k^{[a, 1]}(x)^2}&= \lim_{x\to0^+} \frac{x\tau_k^{[a, 1]}(x)^2}{1-\tau_k^{[a, 1]}(x)^2}= \frac{1}{2|c_1^{(a,k)}|}, 
\end{align*}
meaning that the maximum error in $(0, a]$ corresponding to $\tau_k^{[a,b]}$ is  $\frac{1}{2|c_1^{(a,k)}|}$ and is equal to  the absolute value of the reciprocal of the coefficient of degree one given by $\tau_k^{[a,1]}(x)^2$.
Let us provide an explicit formula for $c_1^{(a,k)}$; we have that
$
c_1^{(a,k)}= [\tau_k^{[a,1]}]'(0)
$,
and in view of the relation $\tau_k'(x)=k q_{k-1}(x)$, with $q_j(x)$ indicating the $j$th Chebyshev polynomial of the 2\textsuperscript{nd}-kind, it follows
\begin{equation}\label{eq:c1}
	c_1^{(a,k)}=-\frac{2k}{1-a} q_{k-1}\left(\frac{1+a}{1-a}\right)\Big{/}\tau_k\left(\frac{1+a}{1-a}\right).
\end{equation}
For a real argument $x$, Chebyshev polynomials of the 2\textsuperscript{nd}-kind can be expressed with the formula
$$
q_k(x) = \frac{(x+\sqrt{x^2-1})^{k+1} - (x-\sqrt{x^2-1})^{k+1}}{2\sqrt{x^2-1}}.
$$
By substituting the formula for Chebyshev coefficients (both first and second kind) into \eqref{eq:c1}, and letting $y=\frac{1+a}{1-a}$ we get
\begingroup
\allowdisplaybreaks
\begin{align*}
	c_1^{(a,k)}&=-\frac{2k}{1-a}\cdot \frac{(y+\sqrt{y^2-1})^{k} - (y-\sqrt{y^2-1})^{k}}{(y+\sqrt{y^2-1})^{k} + (y-\sqrt{y^2-1})^{k}}\cdot\frac{1}{\sqrt{y^2-1}}\\
	&=-\frac{2k}{1-a}\cdot \left[1-\frac{ 2(y-\sqrt{y^2-1})^{k}}{(y+\sqrt{y^2-1})^{k} + (y-\sqrt{y^2-1})^{k}}\right]\cdot\frac{1}{\sqrt{y^2-1}}\\
	&=-\frac{2k}{1-a}\cdot\left[1-\frac{ 2}{(y+\sqrt{y^2-1})^{2k} + 1}\right]\cdot\frac{1}{\sqrt{y^2-1}}\\
	&=-\frac{2k}{1-a}\cdot\left[1-\frac{ 2}{(\frac{(1+\sqrt{a})^2}{1-a})^{2k} + 1}\right]\cdot \frac{1-a}{2\sqrt{a}}\\
	&=-\frac{k}{\sqrt{a}}\left[1-\frac{2(1-a)^{2k}}{(1+\sqrt{a})^{4k}+ (1-a)^{2k}}\right]\\
	&= -\frac{k}{\sqrt{a}}\cdot \frac{(1+\sqrt{a})^{2k}-(1-\sqrt{a})^{2k}}{(1+\sqrt{a})^{2k}+(1-\sqrt{a})^{2k}}\\
	&=-k\cdot \frac{\sum_{j=0}^{k-1} \binom{2k}{2j+1}a^{j}}{\sum_{j=0}^{k}\binom{2k}{2j}a^{j}}\\
	&\Rightarrow \frac{1}{2|c_1^{(a, k)}|}=\frac{\displaystyle \sum_{j=0}^{k}\binom{2k}{2j}a^{j}}{\displaystyle 2k\sum_{j=0}^{k-1} \binom{2k}{2j+1}a^{j}}.
\end{align*}
\endgroup
The next natural question is about the dependency of 
$\nicefrac{1}{2|c_1^{(a,k)}|}$ on the parameter $a$; the following lemma, 
proven in Appendix~\ref{appendix}, ensures that such a value is monotonically 
increasing with respect to $a$. 

\begin{lemma}\label{lem:interlacing}
	The function \[g(x):=\frac{\displaystyle \sum_{j=0}^{k}\binom{2k}{2j}x^{j}}{\displaystyle 2k\sum_{j=0}^{k-1} \binom{2k}{2j+1}x^{j}}\] is increasing on~$(0, 1)$.
\end{lemma}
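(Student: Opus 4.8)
The plan is to avoid touching the binomial sums directly and instead recycle the closed forms already obtained above. By \eqref{eq:c1} and the subsequent simplification, $g(a)=\frac{1}{2|c_1^{(a,k)}|}=\frac{(1-a)\,\tau_k(y)}{4k\,q_{k-1}(y)}$ with $y=\frac{1+a}{1-a}$, and $a\mapsto y$ is a strictly increasing bijection of $(0,1)$ onto $(1,\infty)$. I would then set $y=\cosh\phi$ with $\phi\in(0,\infty)$, so that $a\mapsto\phi$ is again a strictly increasing bijection of $(0,1)$ onto $(0,\infty)$, because $a=\frac{y-1}{y+1}=\tanh^2(\phi/2)$. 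Using the identity $(i)$ recalled above in the form $\tau_k(\cosh\phi)=\cosh(k\phi)$, the companion identity $q_{k-1}(\cosh\phi)=\frac{\sinh(k\phi)}{\sinh\phi}$ (immediate from the explicit formula for $q_k$ given above, since $\cosh\phi\pm\sqrt{\cosh^2\phi-1}=e^{\pm\phi}$), together with $1-a=1/\cosh^2(\phi/2)$ and $\sinh\phi=2\sinh(\phi/2)\cosh(\phi/2)$, one checks in two lines that
\[
g(a)=\frac{(1-a)\sinh\phi}{4k}\cdot\frac{\cosh(k\phi)}{\sinh(k\phi)}=\frac{\tanh(\phi/2)}{2k\,\tanh(k\phi)}.
\]
Since $a\mapsto\phi$ is increasing, it then suffices to prove that $\psi(\phi):=\frac{\tanh(\phi/2)}{\tanh(k\phi)}$ is increasing on $(0,\infty)$ for every integer $k\ge 1$.

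For the latter I would take the logarithmic derivative. Using $\frac{d}{d\phi}\log\tanh(c\phi)=\frac{2c}{\sinh(2c\phi)}$ one gets
\[
\frac{\psi'(\phi)}{\psi(\phi)}=\frac{1}{\sinh\phi}-\frac{2k}{\sinh(2k\phi)},
\]
and, $\psi$ being positive on $(0,\infty)$, this is positive exactly when $\sinh(2k\phi)>2k\sinh\phi$. That inequality is elementary: $u\mapsto\frac{\sinh u}{u}=\sum_{n\ge0}\frac{u^{2n}}{(2n+1)!}$ is strictly increasing on $(0,\infty)$, so from $2k\phi\ge 2\phi>\phi>0$ we get $\frac{\sinh(2k\phi)}{2k\phi}>\frac{\sinh\phi}{\phi}$, which is precisely $\sinh(2k\phi)>2k\sinh\phi$. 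Hence $\psi'>0$ on $(0,\infty)$ and therefore $g'>0$ on $(0,1)$, which is the claim. As a consistency check, as $a\to0^+$ (equivalently $\phi\to0^+$) both $g(a)$ and $\frac{\tanh(\phi/2)}{2k\tanh(k\phi)}$ tend to $\frac{1}{4k^2}$, so $g$ increases away from its lower-endpoint value.

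The one mildly delicate step is the algebraic collapse to $g(a)=\frac{\tanh(\phi/2)}{2k\tanh(k\phi)}$; once that is in place the monotonicity is a one-line calculus argument. A self-contained alternative would be to prove the polynomial inequality $N(x)D'(x)<N'(x)D(x)$ on $(0,1)$ for $N(x)=\sum_{j=0}^{k}\binom{2k}{2j}x^{j}$ and $D(x)=2k\sum_{j=0}^{k-1}\binom{2k}{2j+1}x^{j}$ by expanding the two Cauchy products and comparing coefficients; I expect that to be the main obstacle, since the required binomial identities (Vandermonde-type sums over $\binom{2k}{2i}\binom{2k}{2j+1}$) are cumbersome to bound termwise, whereas the substitution $\frac{1+a}{1-a}=\cosh\phi$ makes the monotonicity transparent.
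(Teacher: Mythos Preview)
Your argument is correct. The reduction $g(a)=\dfrac{\tanh(\phi/2)}{2k\,\tanh(k\phi)}$ via $y=\cosh\phi$ checks out, and the monotonicity of $\psi(\phi)=\tanh(\phi/2)/\tanh(k\phi)$ follows exactly as you say from the strict superadditivity $\sinh(2k\phi)>2k\sinh\phi$.

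The paper takes a different path. It writes $g(x)=p(x)/(2k\,q(x))$, observes that $p(x^2)=\tfrac12[(1+x)^{2k}+(1-x)^{2k}]$ and $xq(x^2)=\tfrac12[(1+x)^{2k}-(1-x)^{2k}]$, and from this computes the roots of $p$ and $q$ explicitly as $\alpha_j=-\tan^2\bigl((2j+1)\pi/(4k)\bigr)$ and $\delta_j=-\tan^2\bigl(j\pi/(2k)\bigr)$. These interlace on the negative axis, and the paper then shows $g'>0$ on $(0,1)$ by rewriting $(\log(p/q))'=\sum_{j=1}^{k-1}\frac{\alpha_j-\delta_j}{(x-\alpha_j)(x-\delta_j)}+\frac{1}{x-\alpha_k}$ and noting every summand is positive there. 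So the paper proves an interlacing property for the numerator and denominator (which explains the lemma's name), whereas you bypass the roots entirely by reusing the closed form for $c_1^{(a,k)}$ already derived in \eqref{eq:c1}. Your route is shorter and stays within calculus once the hyperbolic parametrisation is in place; the paper's route gives more structural information (location of all zeros) at the cost of a longer detour.
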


Finally, we analyze the objective function in the interval $[a,1]$. Focusing on the expression \( \displaystyle \max_{x\in [a, 1]} x\left|1-\frac{1}{1-\tau_k^{[a,1]}
	(x)^2}\right| \), it becomes readily apparent that the maximum is achieved when 
\( x=1 \), at which point the value is given by:
\begingroup
\allowdisplaybreaks
\begin{align*}
	\frac{\tau_k^{[a,1]}(1)^2}{1-\tau_k^{[a,1]}(1)^2}&=\frac{1}{\tau_k\left(\frac{1+a}{1-a}\right)^2-1}\\
	&=\frac{1}{\frac 14 \left[ \left(\frac{\left(1+\sqrt{a}\right)^2}{1-a}\right)^k+\left(\frac{\left(1-\sqrt{a}\right)^2}{1-a}\right)^k\right]^2-1}\\
	&=\frac{4}{\left[ \left(\frac{1+\sqrt{a}}{1-\sqrt{a}}\right)^k+\left(\frac{1-\sqrt{a}}{1+\sqrt{a}})\right)^k\right]^2-4}.
\end{align*}	
\endgroup
Looking back at \eqref{eq:max2}, we see that, for a fixed $a$, we are taking the maximum between two quantities that depend monotonically on $a$. In particular, $\frac{\tau_k^{[a,1]}(a)^2}{1-\tau_k^{[a,1]}(a)^2}$  decreases from $+\infty$ to $0$ on $(0, 1)$ while $\frac{1}{2|c_1^{(a,k)}|}$ is non negative and monotonically increasing in view of Lemma~\ref{lem:interlacing}.

Therefore, there is only one value $a^*_k$ in $(0,1)$ of the parameter $a$  that satisfies the identity 
$\frac{1}{2|c_1^{(a,k)}|}= \frac{\tau_k^{[a,1]}(1)^2}{1-\tau_k^{[a,1]}(1)^2}$, 
i.e.:
$$
\frac{\sqrt{a}}{2k}\cdot \frac{(1+\sqrt{a})^{2k}+(1-\sqrt{a})^{2k}}{(1+\sqrt{a})^{2k}-(1-\sqrt{a})^{2k}}= \frac{4}{\left[ \left(\frac{1+\sqrt{a}}{1-\sqrt{a}}\right)^k+\left(\frac{1-\sqrt{a}}{1+\sqrt{a}})\right)^k\right]^2-4},
$$
and $a_k^*$ is the optimal choice for \eqref{eq:lambdak}; see Fig.~\ref{fig:minimax} in which we show the behaviour of the objective function for different values of $a$.
\begin{figure}[htbp]
	\definecolor{mycolor2}{rgb}{0.00000,1.00000,1.00000}%
	\centering
	\input{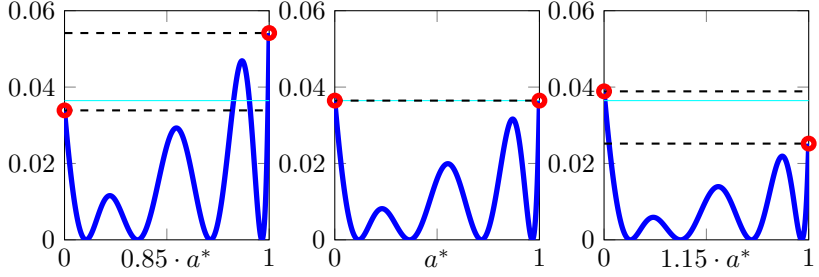}
	\caption{Graphs of the function $\nicefrac{x (\tau_{4}^{[a,1]}(x))^2}{1-(\tau_{4}^{[a,1]}(x))^2}$ for different choices of $a$, on the left panel $a = 0.85\cdot a^*$, on the center $a = a^*$, and on the right $a = 1.15 \cdot a^*$ with $a^*$ the optimal value. The solid cyan line ``\textcolor{mycolor2}{$-$}'' represents the optimal value $\Lambda_4$ of~\eqref{eq:lambdak}. }
	\label{fig:minimax}
\end{figure}

By applying some algebraic manipulations to the above equation, we find that $\sqrt{a_k^*}$ is the only root in $(0,1)$ of the polynomial equation
\begin{align}\label{eq:optimalparameter}
	8k (1 - x^2)^{2 k} + x \left[(1 - x)^{4 k} - (1 + x)^{4 k}\right]=0,
\end{align}
that can be approximated by using the combination of bisection, secant, and inverse quadratic interpolation methods available in MATLAB's \lstinline[style=Matlab-editor]{fzero} routine and based on the version of Brent's Algorithm from~\cite{MR0458783}.

In Figure~\ref{fig:apols} we report the first polynomials $\tau_k^{[a,1]}(x)$ for $a$ the optimal value obtained from the solution of~\eqref{eq:optimalparameter}, that can be compared with the polynomials from Fig.~\ref{fig:lotte_polynomials}.

\begin{figure}
	\centering
	\input{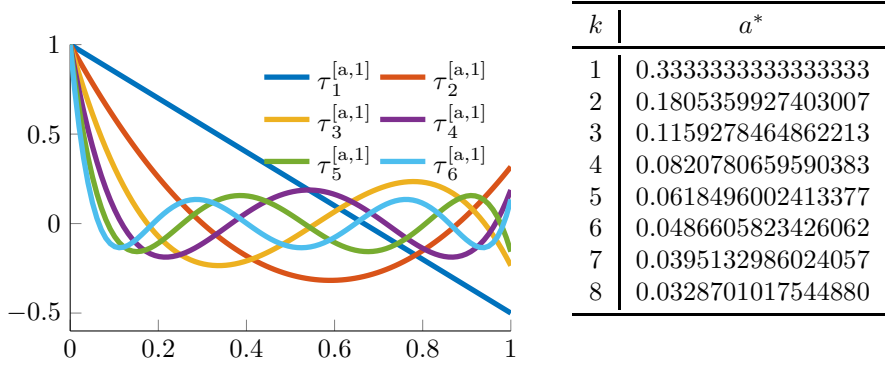}
	\caption{Optimized shifted and scaled Chebsyhev polynomials $\tau_k^{[a^*,1]}(x)$ with $a^*$ computed as the zero in $(0,1)$ of~\eqref{eq:optimalparameter}. This should be compared with the polynomials in Fig.~\ref{fig:lotte_polynomials}.}
	\label{fig:apols}
\end{figure}
\subsubsection{Bounds for the Optimal Parameters and Optimal Values}
In this section we  show that the sequence of optimal parameters $a_k^*$ approaches $0$ at least as fast as $\displaystyle \nicefrac{\log(k)^2}{k^2}$. By means of the latter property we can state  upper and lower bounds for the optimal values $\Lambda_k$.
\begin{theorem}\label{thm:convergence_constants}
	Let $\Lambda_k$ be the constant defined at \eqref{eq:lambdak} and let $a_k^*\in(0,1)$ be such that \[\displaystyle \max_{x\in(0,1]}x \left\lvert 1-\frac{1}{1-\tau_k^{[a_k^*,1]}(x)^2} \right\rvert = \Lambda_k.\] If $k\ge 3$, then
	$$
	\frac{\log(k)^2}{9k^2}\le a_k^*\le \frac{\log(k)^2}{k^2},\qquad \text{and}\qquad \frac{\log(k)}{6k^2}\le \Lambda_k\le 1.03\cdot \frac{\log(k)}{2k^2}.
	$$
\end{theorem}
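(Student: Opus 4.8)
The plan is to pass to a hyperbolic reformulation of the equi-oscillation equation that defines $a_k^*$. Writing $s=\sqrt a$, $t=\frac{1+s}{1-s}$ and $w=w(a):=2k\operatorname{artanh}(s)$ (so that $t^{k}=e^{w}$, $t^{k}+t^{-k}=2\cosh w$), the formulas obtained above for $\tfrac1{2|c_1^{(a,k)}|}$ and for $\frac{\tau_k^{[a,1]}(1)^2}{1-\tau_k^{[a,1]}(1)^2}$ collapse to
\[
g(a):=\frac1{2|c_1^{(a,k)}|}=\frac{s}{2k}\coth w,\qquad h(a):=\frac{\tau_k^{[a,1]}(1)^2}{1-\tau_k^{[a,1]}(1)^2}=\frac1{\sinh^2 w},
\]
and, by Lemma~\ref{lem:monotonicity}, $g(a)$ is exactly the supremum of the objective of \eqref{eq:lambdak} over $(0,a]$ while $h(a)$ is its maximum over $[a,1]$. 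Dividing, the defining relation $g(a)=h(a)$ becomes the single clean equation $s\sinh(2w)=4k$. By Lemma~\ref{lem:interlacing}, $g$ is strictly increasing (from $\tfrac1{4k^2}$ to $\tfrac1{2k}$) and, since $w$ increases with $a$, $h$ is strictly decreasing (from $+\infty$ to $0$); hence $a_k^*$ is their unique crossing point, $\Lambda_k=g(a_k^*)=h(a_k^*)$, and for any $a_0\in(0,1)$ we have $a_0\ge a_k^*$ iff $g(a_0)\ge h(a_0)$, equivalently iff $s_0\sinh(2w_0)\ge 4k$ with $s_0=\sqrt{a_0}$ and $w_0=2k\operatorname{artanh}(s_0)$ (and the reverse inequalities likewise).

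With this dictionary the bound on $a_k^*$ reduces to a one-point test at each trial value. For the upper bound I would take $a_0=\log(k)^2/k^2$, so $s_0=\log(k)/k$ and, using $\operatorname{artanh}(x)\ge x$, $w_0\ge 2\log k$; then $\sinh(2w_0)\ge\sinh(4\log k)=\tfrac12(k^4-k^{-4})$, so $s_0\sinh(2w_0)\ge\tfrac12\log(k)(k^3-k^{-5})\ge 4k$ for $k\ge3$, giving $a_k^*\le\log(k)^2/k^2$. For the lower bound I would take $a_1=\log(k)^2/(9k^2)$, so $s_1=\log(k)/(3k)$; from $\operatorname{artanh}(x)\le x/(1-x^2)$ and $\log(k)/k\le 1/e$ one gets $w_1\le 0.68\log k$, hence $\sinh(2w_1)\le\tfrac12 e^{2w_1}\le\tfrac12 k^{1.36}$ and $s_1\sinh(2w_1)\le\tfrac16 k^{0.36}\log(k)\le 4k$ for all $k\ge3$; therefore $a_k^*\ge\log(k)^2/(9k^2)$.

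The bounds on $\Lambda_k$ then follow from $\Lambda_k=g(a_k^*)$ together with the monotonicity of $g$. For the lower bound, $\Lambda_k\ge g(a_1)=\frac{\log(k)}{6k^2}\coth(w_1)\ge\frac{\log(k)}{6k^2}$ since $\coth>1$. For the upper bound, $\Lambda_k\le g(a_0)=\frac{\log(k)}{2k^2}\coth(w_0)$ with $w_0\ge 2\log k$, so $\coth(w_0)\le\coth(2\log k)=1+\tfrac{2}{k^4-1}\le 1.03$ for $k\ge3$, i.e.\ $\Lambda_k\le 1.03\cdot\frac{\log(k)}{2k^2}$.

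The genuinely new step is the reduction of the equi-oscillation condition to $s\sinh(2w)=4k$ (with $w=2k\operatorname{artanh}(s)$) together with the monotonicity dictionary $g\uparrow$, $h\downarrow$; after that the proof is a matter of substituting the two trial values of $a$ into elementary inequalities for $\operatorname{artanh}$, $\sinh$ and $\coth$ and verifying them for $k\ge3$. The only delicate point is bookkeeping the explicit constants: at $k=3$ both the $\log(k)^2/k^2$ upper bound for $a_k^*$ and the factor $1.03$ in the $\Lambda_k$ bound hold with only a modest margin, so the elementary estimates have to be carried out with enough precision (e.g.\ keeping $\operatorname{artanh}(x)\le x/(1-x^2)$ rather than a cruder bound).
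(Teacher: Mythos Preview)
Your proof is correct and follows essentially the same approach as the paper: test the trial values $a_0=\log(k)^2/k^2$ and $a_1=\log(k)^2/(9k^2)$ against the monotonicity of $g$ and $h$, then read off the $\Lambda_k$ bounds from $\Lambda_k=g(a_k^*)$. Your hyperbolic substitution $w=2k\operatorname{artanh}(\sqrt a)$ is a clean repackaging of the paper's algebra---indeed the paper's polynomial $\varphi(s)=0$ is exactly your $s\sinh(2w)=4k$, and the paper's $\theta(s)$ is your $\frac{s}{2k}\coth w$---so the elementary inequalities you use ($\operatorname{artanh}(x)\ge x$, $\operatorname{artanh}(x)\le x/(1-x^2)$, $\coth(2\log k)=1+2/(k^4-1)$) play the same role as the paper's direct estimates on $\bigl(\tfrac{k\pm\log k}{k\mp\log k}\bigr)^{2k}$ and on $\bigl(1-\tfrac{2\log k}{k+\log k}\bigr)^{2k}$.
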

\begin{proof}
	Let $\varphi(x):=8k (1 - x^2)^{2 k} + x \left[(1 - x)^{4 k} - (1 + x)^{4 k}\right]$ and note that the function $\varphi$ is negative on the right of its root $\sqrt{a_k^*}$ and positive on its left, i.e., for $x\in(0,1)$: $$ \begin{cases}\varphi(x)\le 0\quad\Rightarrow\quad x^2\ge a_k^*\\\varphi(x)\ge 0\quad\Rightarrow\quad x^2\le a_k^*\end{cases}.$$ To prove the first claim it is sufficient to show that for $k\ge 3$ we have $\varphi\left(\frac{\log(k)}{k}\right)\le 0$, and $\varphi\left(\frac{\log(k)}{3k}\right)\ge 0$. By means of some elementary computations  we obtain
	\begingroup
	\allowdisplaybreaks
	\begin{align*}\varphi\left(\frac{\log(k)}{k}\right)\le 0 \quad\Leftrightarrow\quad \left(\frac{k+\log(k)}{k-\log(k)}\right)^{2k}-\left(\frac{k-\log(k)}{k+\log(k)}\right)^{2k}\ge \frac{8k^2}{\log(k)}.
	\end{align*}
	\endgroup
	To get the above inequality we observe that
	\begingroup
	\allowdisplaybreaks
	\begin{align*}
		\left(\frac{k+\log(k)}{k-\log(k)}\right)^{2k}-\left(\frac{k-\log(k)}{k+\log(k)}\right)^{2k}&\ge\left(\frac{k+\log(k)}{k-\log(k)}\right)^{2k}-1\\ &=\left[\left(1+\frac{2\log(k)}{k-\log(k)}\right)^{\frac{k-\log(k)}{2\log(k)}}\right]^{\frac{4k\log(k)}{k-\log(k)}}-1\\
		&\ge 2^{\frac{4k\log(k)}{k-\log(k)}}-1\\
		&=k^{\frac{4k}{\log(2)(k-\log(k))}}-1\\
		&\ge k^4-1,
	\end{align*}
	\endgroup
	and that $k^4-1\ge 8k^2$--thus greater than $\frac{8k^2}{\log(k)}$ as well--for any $k\ge 3$.
	
	For the lower bound, we have that $\varphi\left(\frac{\log(k)}{3k}\right)\ge 0$ if and only if
	$$
	\left(\frac{3k+\log(k)}{3k-\log(k)}\right)^{2k}-\left(\frac{3k-\log(k)}{3k+\log(k)}\right)^{2k}\le \frac{24k^2}{\log(k)}.
	$$
	Then, observe that
	\begin{align*}
		\left(\frac{3k+\log(k)}{3k-\log(k)}\right)^{2k}-\left(\frac{3k-\log(k)}{3k+\log(k)}\right)^{2k}&\le    \left(\frac{3k+\log(k)}{3k-\log(k)}\right)^{2k}\\
		&\le \exp\left(\frac{4k\log(k)}{3k-\log(k)}\right)\\
		&\le k^{\frac{12}{9-\log(3)}},
	\end{align*}
	and that $k^{\frac{12}{9-\log(3)}}\le \frac{24k^2}{\log(k)}$ for any $k\ge 3$.
	
	In view of the argument used to determine $a_k^{*}$ we see that to get an upper bound for $\Lambda_k$ it is sufficient to compute an upper bound for $\theta(\frac{\log(k)}{k})$ with \[\theta(x):=\frac{x}{2k}\frac{(1+x)^{2k}+(1-x)^{2k}}{(1+x)^{2k}-(1-x)^{2k}}.\] Once again, some elementary computations yield
	\begin{align*}
		\theta\left(\frac{\log(k)}{k}\right)&=\frac{\log(k)}{2k^2}\cdot
		\frac{1+\left(1-\frac{2\log(k)}{k+\log(k)}\right)^{2k}}{1-\left(1-\frac{2\log(k)}{k+\log(k)}\right)^{2k}}.
	\end{align*}
	To get the inequality in the claim it is sufficient to note that the right factor in the above expression is a decreasing function of $k$, and for $k= 3$ it takes a value of approximately $1.0201$ that we can safely bound with $1.03$. 
	
	Finally, to get a lower bound for $\Lambda_k$ we compute a lower bound for $\theta\left(\frac{\log(k)}{3k}\right)$:
	\begin{equation*}
	\theta\left(\frac{\log(k)}{3k}\right)=\frac{\log(k)}{6k^2}\cdot
	\frac{1+\left(1-\frac{2\log(k)}{3k+\log(k)}\right)^{2k}}{1-\left(1-\frac{2\log(k)}{3k+\log(k)}\right)^{2k}}\ge \frac{\log(k)}{6k^2}.\qedhere
    \end{equation*}
\end{proof}

In Figure~\ref{fig:constantbounds} we show the bounds obtained in Theorem~\ref{thm:convergence_constants} for the values of $\{ a_k^* \}_k$, and $\{\Lambda_k\}_k$. In Figure~\ref{fig:boundcomparison} we show the computed values of  $\{\Lambda_k\}_k$ for the 1\textsuperscript{st}-kind Chebyshev polynomials, compared with the computed values of the bounds for the 4\textsuperscript{th}-kind Chebyshev polynomials and the lower bound for the numerically-approximated optimal 4\textsuperscript{th}-kind Chebyshev polynomials. We observe that for low values of $k$, i.e., $k \leq 6$, the bound obtained with the optimization of the Chebyshev polynomials of the 1\textsuperscript{st}-kind is lower than that obtained with those of the 4\textsuperscript{th}-kind in~\eqref{eq:lottebound-4cheby}. We also observe that for $k >1$ the bound is larger than the lower bound corresponding to the optimal 4\textsuperscript{th}-kind Chebyshev polynomials.

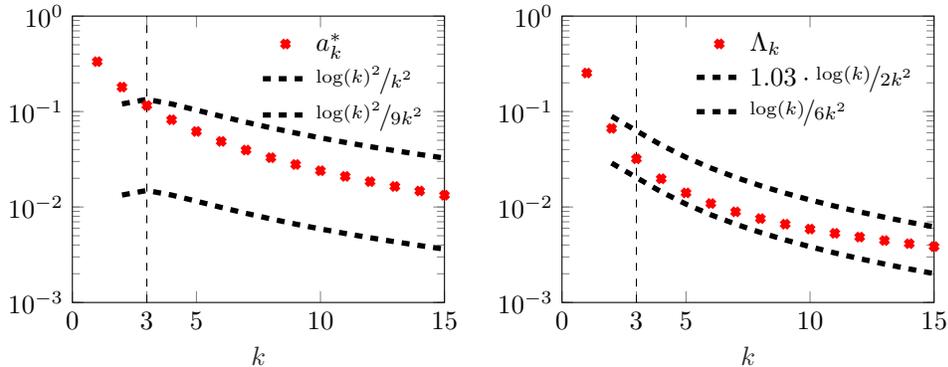
\begin{figure}[htbp]
	\centering
%
%
\begin{tikzpicture}

\begin{axis}[%
width=0.38\columnwidth,
height=1.5in,
at={(0in,0in)},
scale only axis,
xmin=0,
xmax=15,
xtick={ 0,  3,  5, 10, 15},
xlabel style={font=\color{white!15!black}},
xlabel={$k$},
ymin=0.001,
ymax=1,
ymode=log,
axis background/.style={fill=white},
legend style={legend cell align=left, align=left, draw=none, fill=none}
]
\addplot [color=red, line width=2.0pt, only marks, mark=x, mark options={solid, red}]
  table[row sep=crcr]{%
1	0.333333333333333\\
2	0.180535992740301\\
3	0.115927846486221\\
4	0.0820780659590383\\
5	0.0618496002413377\\
6	0.0486605823426062\\
7	0.0395132986024057\\
8	0.032870101754488\\
9	0.02787028627218\\
10	0.023998740960062\\
11	0.0209304400432259\\
12	0.0184513099045066\\
13	0.0164152586042591\\
14	0.0147195638076874\\
15	0.0132901324757843\\
};
\addlegendentry{$a_k^*$}

\addplot [color=black, dashed, line width=2.0pt]
  table[row sep=crcr]{%
1	0\\
2	0.12011325347955\\
3	0.134105440090287\\
4	0.12011325347955\\
5	0.103611615759209\\
6	0.0891778332102334\\
7	0.077276863432581\\
8	0.0675637050822471\\
9	0.0596024178179053\\
10	0.053018981104784\\
11	0.0475198490851965\\
12	0.0428802851261154\\
13	0.0389287882032092\\
14	0.0355337938214083\\
15	0.0325934928519543\\
};
\addlegendentry{$\nicefrac{\log(k)^2}{k^2}$}

\addplot [color=black, dashed, line width=2.0pt]
  table[row sep=crcr]{%
1	0\\
2	0.0133459170532834\\
3	0.0149006044544763\\
4	0.0133459170532834\\
5	0.0115124017510233\\
6	0.00990864813447037\\
7	0.00858631815917567\\
8	0.0075070783424719\\
9	0.00662249086865614\\
10	0.00589099790053155\\
11	0.0052799832316885\\
12	0.00476447612512394\\
13	0.00432542091146769\\
14	0.00394819931348981\\
15	0.0036214992057727\\
};
\addlegendentry{$\nicefrac{\log(k)^2}{9k^2}$}

\addplot [color=black, dashed, forget plot]
  table[row sep=crcr]{%
3	0.001\\
3	1\\
};
\end{axis}

\begin{axis}[%
width=0.38\columnwidth,
height=1.5in,
at={(0.5\columnwidth,0in)},
scale only axis,
xmin=0,
xmax=15,
xtick={ 0,  3,  5, 10, 15},
xlabel style={font=\color{white!15!black}},
xlabel={$k$},
ymode=log,
ymin=0.001,
ymax=1,
yminorticks=true,
axis background/.style={fill=white},
legend style={legend cell align=left, align=left, draw=none, fill=none}
]
\addplot [color=red, line width=2.0pt, only marks, mark=x, mark options={solid, red}]
  table[row sep=crcr]{%
1	0.253322533118946\\
2	0.0666095886474633\\
3	0.0319671862678828\\
4	0.0197699411093773\\
5	0.0140523814284038\\
6	0.0108789004197773\\
7	0.00890413869509085\\
8	0.00756842026098009\\
9	0.00660590212047851\\
10	0.00587748685606092\\
11	0.00530473429956836\\
12	0.00484063307992698\\
13	0.00445552185446093\\
14	0.00412983471181633\\
15	0.0038501517289458\\
};
\addlegendentry{$\Lambda_{k}$}

\addplot [color=black, dashed, line width=2.0pt]
  table[row sep=crcr]{%
1	0\\
2	0.089242699497093\\
3	0.0628650365182307\\
4	0.0446213497485465\\
5	0.0331544209961425\\
6	0.0256321146292347\\
7	0.0204519127910916\\
8	0.0167330061557049\\
9	0.0139700081151624\\
10	0.0118583132289193\\
11	0.0102059178966212\\
12	0.00888699253222792\\
13	0.00781626579344788\\
14	0.00693425777934622\\
15	0.00619842601585617\\
};
\addlegendentry{$1.03\cdot \nicefrac{\log(k)}{2k^2}$}

\addplot [color=black, dashed, line width=2.0pt]
  table[row sep=crcr]{%
1	0\\
2	0.0288811325233311\\
3	0.0203446720123724\\
4	0.0144405662616655\\
5	0.010729586082894\\
6	0.00829518272790766\\
7	0.0066187420035895\\
8	0.00541521234812457\\
9	0.00452103822497165\\
10	0.00383764182165674\\
11	0.00330288605068646\\
12	0.00287604936318056\\
13	0.00252953585548475\\
14	0.00224409636872046\\
15	0.00200596311192756\\
};
\addlegendentry{$\nicefrac{\log(k)}{6k^2}$}

\addplot [color=black, dashed, forget plot]
  table[row sep=crcr]{%
3	0.001\\
3	1\\
};
\end{axis}
\end{tikzpicture}%
	\caption{Bounds and computed quantities for the optimal parameters $a_k^*$ for the 1\textsuperscript{st}-kind Chebyshev polynomials and the smoothing constant $\Lambda_k$, $k=1,\ldots,15$.}
	\label{fig:constantbounds}
\end{figure}

\begin{figure}[htbp]
	\centering
%
%
\definecolor{mycolor1}{rgb}{0.00000,0.44700,0.74100}%
\definecolor{mycolor2}{rgb}{0.85000,0.32500,0.09800}%
\begin{tikzpicture}

\begin{axis}[%
width=0.3\columnwidth,
height=1.3in,
at={(0\columnwidth,0in)},
scale only axis,
xmin=0,
xmax=15,
ymode=log,
ymin=0.001,
ymax=1,
yminorticks=true,
axis background/.style={fill=white},
xmajorgrids,
xminorgrids,
ymajorgrids,
yminorgrids
]
\addplot [color=mycolor1, only marks, mark=square, mark options={solid, mycolor1}, forget plot]
  table[row sep=crcr]{%
1	0.375\\
2	0.125\\
3	0.0625\\
4	0.0375\\
5	0.025\\
6	0.0178571428571429\\
7	0.0133928571428571\\
8	0.0104166666666667\\
9	0.00833333333333333\\
10	0.00681818181818182\\
11	0.00568181818181818\\
12	0.00480769230769231\\
13	0.00412087912087912\\
14	0.00357142857142857\\
15	0.003125\\
};
\addplot [color=mycolor2, only marks, mark=triangle, mark options={solid, mycolor2}, forget plot]
  table[row sep=crcr]{%
1	0.333333333333333\\
2	0.112015284483472\\
3	0.0583799108887474\\
4	0.0364585625794908\\
5	0.0251807505038628\\
6	0.0185523566224834\\
7	0.0142996943551221\\
8	0.0113957022544334\\
9	0.00931777395189635\\
10	0.00777582002921479\\
11	0.00659772898163279\\
12	0.00567585604215863\\
13	0.00493992741097829\\
14	0.00434240512759291\\
15	0.0038501517289458\\
};
\addplot [color=red, only marks, mark=+, mark options={solid, red}, forget plot]
  table[row sep=crcr]{%
1	0.333333333333333\\
2	0.105572809000084\\
3	0.052095083601687\\
4	0.0310912041257632\\
5	0.020672197824105\\
6	0.014743298009627\\
7	0.0110469002707826\\
8	0.00858655133486778\\
9	0.00686617111092233\\
10	0.00561594964618996\\
11	0.00467881635584698\\
12	0.00395825535563739\\
13	0.00339228985424323\\
14	0.00293963756349429\\
15	0.00257193619047714\\
};
\end{axis}

\begin{axis}[%
width=0.3\columnwidth,
height=1.7in,
at={(0.432\columnwidth,0in)},
scale only axis,
xmin=1,
xmax=5,
ymode=log,
ymin=0.020672197824105,
ymax=0.375,
yminorticks=true,
axis background/.style={fill=white},
xmajorgrids,
xminorgrids,
ymajorgrids,
yminorgrids,
legend style={at={(0.211,0.7)}, anchor=south west, legend cell align=left, align=left, draw=none, fill=none, font=\small}
]
\addplot [color=mycolor1, only marks, mark=square, mark options={solid, mycolor1}]
  table[row sep=crcr]{%
1	0.375\\
2	0.125\\
3	0.0625\\
4	0.0375\\
5	0.025\\
};
\addlegendentry{Chebyshev 4 -- Eq.~\eqref{polsmoother}}

\addplot [color=mycolor2, only marks, mark=triangle, mark options={solid, mycolor2}]
  table[row sep=crcr]{%
1	0.333333333333333\\
2	0.112015284483472\\
3	0.0583799108887474\\
4	0.0364585625794908\\
5	0.0251807505038628\\
};
\addlegendentry{Opt. Chebyshev 1 -- Alg.~\ref{alg:chebyshev_iteration}}

\addplot [color=red, only marks, mark=+, mark options={solid, red}]
  table[row sep=crcr]{%
1	0.333333333333333\\
2	0.105572809000084\\
3	0.052095083601687\\
4	0.0310912041257632\\
5	0.020672197824105\\
};
\addlegendentry{Opt. Chebyshev 4 -- Eq.~\eqref{polsmootheropt}}

\end{axis}

\begin{axis}[%
width=\columnwidth,
height=1.84in,
at={(-0.128\columnwidth,-0.202in)},
scale only axis,
xmin=0,
xmax=1,
ymin=0,
ymax=1,
axis line style={draw=none},
ticks=none,
axis x line*=bottom,
axis y line*=left
]
\draw[draw=black] (axis cs:0.143487046632124,0.38) rectangle (axis cs:0.254749568221071,0.78);
\draw[->, color=black,ultra thick] (axis cs:0.253,0.653) -- (axis cs:0.57,0.484);
\end{axis}
\end{tikzpicture}%
	\caption{Comparison of computed bound of the quasi-optimal 1\textsuperscript{st}-kind~\eqref{polsmootheropt1kind} and 4\textsuperscript{th}-kind Chebyshev polynomials~\eqref{polsmoother} and the numerically optimized polynomials expressed in terms of the  4\textsuperscript{th}-kind Chebyshev~\eqref{polsmootheropt}, for $k=1,\ldots,15$. The enlargement on the right shows where polynomials of the 1\textsuperscript{st}-kind return a better bound than those of the 4\textsuperscript{th}.}
	\label{fig:boundcomparison}
\end{figure}
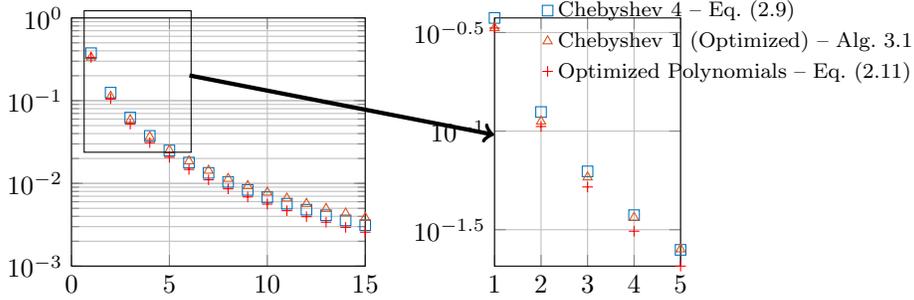

\begin{remark}
    Observe that all the discussed upper bounds in Theorem~\ref{thm:convergence_constants} and the upper bounds discussed in \cite{LottesNLAA2023} concern the solution of the minimax problem~\eqref{eq:prob1}. However, the optimal performance for a given matrix $A$ is achieved by the polynomial that solves the same minimax problem on the discrete set of eigenvalues of $A$. Therefore, it may happen that any of the polynomial accelerators can behave better than expected for certain eigenvalues distribution; see Figure~\ref{fig:better-than-optimal} for an example in which we only use the polynomial smoother as a preconditioner.
\begin{figure}[htbp]
    \centering
    \subfloat[Equispaced]{\includegraphics[width=0.33\columnwidth]{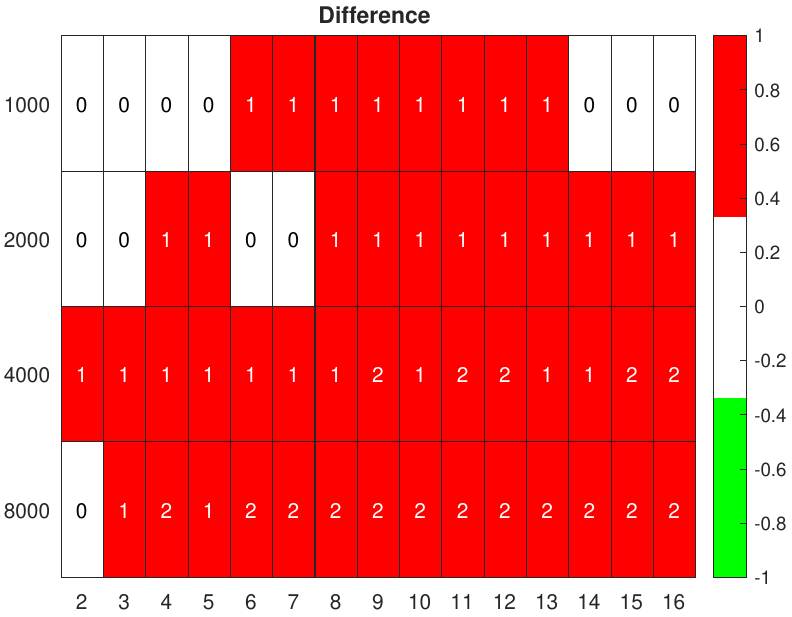}}
    \subfloat[Accumulating at the boundaries]{\includegraphics[width=0.33\columnwidth]{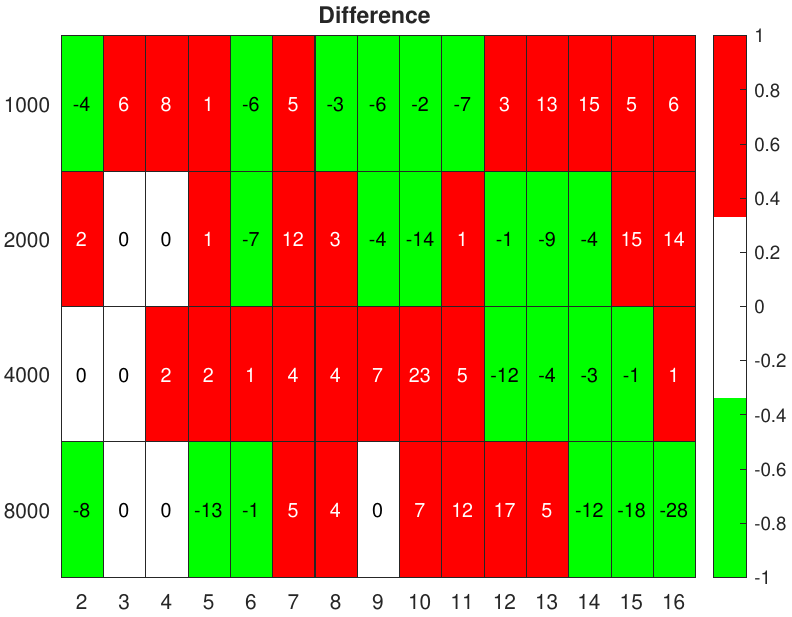}}
    \subfloat[Gap]{\includegraphics[width=0.31\columnwidth]{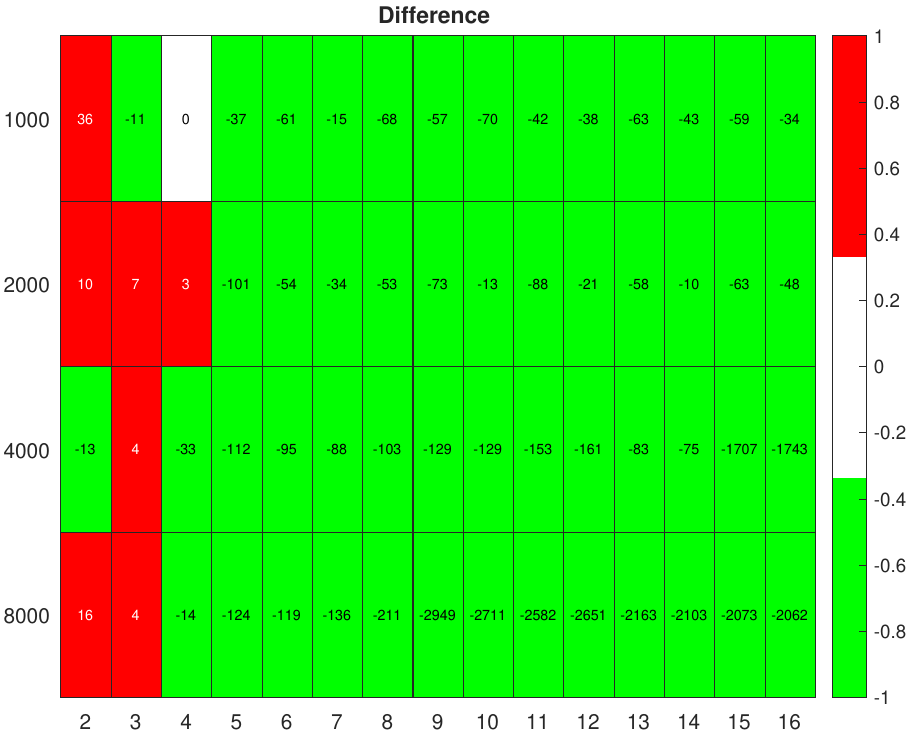}}
    

    \caption{Comparison of PCG iteration counts (\texttt{tol} = $10^{-5}$) using polynomial preconditioning based on optimized Chebyshev polynomials of the 1\textsuperscript{st} and 4\textsuperscript{th} kind~\cite{LottesNLAA2023}, applied on top of the $\ell_1$-Jacobi method as a preconditioner---i.e., employing only the smoother. The test matrix $A$ is defined as $A = Q D Q^\top$, where $Q$ is the eigenvector matrix of the finite-difference discretization of the second derivative, and $D$ is diagonal with three eigenvalue distributions: (a) equispaced in $[1/N, 1]$ (left panel), (b) logarithmically spaced towards $0$ and $1$ as $D = \text{\lstinline[style=Matlab-editor]{diag([1-logspace(-8,-1,N/2),logspace(-8,-1,N/2)])}}$ (center panel), and (c) logarithmically spaced with a gap as $D = \text{\lstinline[style=Matlab-editor]{diag([logspace(-8,-1,N/2),logspace(1,pi,N/2)])}}$ (right panel). The right-hand side is $\mathbf{b} = A\mathbf{1}$. Results are shown for increasing polynomial degrees (abscissa) and matrix sizes (ordinate). Green cells indicate fewer iterations for 1\textsuperscript{st} kind, while red cells indicate fewer iterations for 4\textsuperscript{th} kind.}
    \label{fig:better-than-optimal}
\end{figure}
    
\end{remark}

\section{Design and implementation based on PSCToolkit}\label{sec:amgprec}

The implementation and benchmarking of these smoothers were carried out using the Parallel Sparse Computation Toolkit (\texttt{PSCToolkit}) framework~\cite{softimpact}. {A significant part of the contribution has consisted} in balancing computational efficiency with memory access latency, leveraging optimized GPU memory management techniques. 
The efficient use of the various levels of memory available in GPU accelerators is crucial for
achieving high performance in parallel computing applications; similarly to other devices,  
NVIDIA GPUs contain various memory levels.
The global memory is typically quite large but exhibits  a very high latency, 
of the order of hundreds of cycles, thus necessitating careful programming to hide the cost of its
access; the bandwidth depends on the specific card model, and  while for high-end devices it can be
significantly larger than the bandwidth available on normal CPU memory, it is still much
slower than the combined execution rate of the arithmetic units. 
Shared memory, on the other hand, has a much lower latency and higher bandwidth, and 
can act as a programmable cache enabling faster data exchange, but only among threads within 
the same thread block: proper utilization of shared memory  
can significantly reduce global memory accesses and improve kernel performance. 
GPUs also have registers allocated per-thread, and they are used for storing private 
data of individual threads; these are very fast,  but their combined size is limited. 
Balancing the usage of these different memory types by strategically placing data in 
the most appropriate memory space can drastically enhance the efficiency of GPU kernels. 
If we transform the recurrences necessary to apply the polynomial smoother based on Chebyshev
polynomials of the first type to equation~\eqref{polsmootheropt1kind} into an
algorithmic form we obtain the version described in Algorithm~\ref{alg:chebyshev_iteration}.
\begin{algorithm}[htbp]
	\begin{algorithmic}[1]
		\Function{ChebyshevAcceleration}{$A$,$\mathbf{b}$,$\mathbf{x}^{(0)}$,$M$,$a$,$k$}
		\State $\mathbf{r}^{(0)} \gets  \frac{1}{\rho(M^{-1}A)}M^{-1}(\mathbf{b} - A \mathbf{x}^{(0)})$ \Comment{For $\ell_1$-Jacobi replace $\rho(M^{-1}A)$ with $1$.}
		\State $\sigma_1 \gets \nicefrac{\theta}{\delta}$
		\State $\rho_0 \gets \nicefrac{1}{\sigma_1}$\;
		\State $\mathbf{d}^{(0)} \gets \nicefrac{\mathbf{r}^{(0)}}{\theta}$
		\State $\mathbf{x}^{(1)} \gets \mathbf{x}^{(0)} + \mathbf{d}^{(0)}$ 
		\For{$j=1,\ldots,k-1$}
		\State $\mathbf{s}^{(j)} \gets \frac{1}{\rho(M^{-1}A)} M^{-1} A \mathbf{d}^{(j-1)}$ \Comment{For $\ell_1$-Jacobi replace $\rho(M^{-1}A)$ with $1$.}
		\State $\rho_{j} \gets (2\sigma_1 - \rho_{j-1})^{-1}$
		\State $\mathbf{r}^{(j)} \gets \mathbf{r}^{(j-1)} -  \mathbf{s}^{(j)}$ \label{line:alg1}
		\State $\displaystyle \mathbf{d}^{(j)} \gets \rho_{j}\rho_{j-1} \mathbf{d}^{(j-1)} + \frac{2 \rho_{j}}{\delta} \mathbf{r}^{(j)}$ \label{line:alg2}
		\State $\mathbf{x}^{(j+1)} \gets \mathbf{x}^{(j)} + \mathbf{d}^{(j)}$ \label{line:alg3}
		\EndFor
		\State \Return $\mathbf{x}^{(k)}$
		\EndFunction
	\end{algorithmic}
	\caption{Chebyshev acceleration for the polynomial defined in the recurrence relation~\eqref{eq:recurrence-relation-shifted} and a fixed-point iteration with matrix $M$.}\label{alg:chebyshev_iteration}
\end{algorithm}
From this formulation we can observe that the sequence of \texttt{axpy} calls at
lines~\ref{line:alg1}, \ref{line:alg2} and~\ref{line:alg3} accesses the same vectors 
multiple times; it is therefore advisable to minimize the number of accesses to global memory
by making explicit use of shared memory, e.g. delaying the store operations of lines 10 and 11. 
In a more conventional CPU programming context, these 
optimizations are handled implicitly by the combined action of the memory hierarchy  and cache memory 
policies, and the compiler optimizations; however in a GPU context it is often necessary to pay
explicit attention  to such low-level details in the user code. 
The evolution of programming tools promises to address and alleviate  these issues may; 
however at this time we have chosen to  implement an ad-hoc kernel called
\lstinline[language=Fortran]{psb_upd_xyz(a,b,g,d,x,y,z,desc,info)} to handle the vector updates
minimizing the  data movements to/from global memory. With this implementation, the cost of applying
$k$ steps of the basic $\ell_1$-Jacobi smoother in terms of arithmetic operations and data movement is
indeed the same as that of applying the polynomial smoother in Algorithm~\ref{alg:chebyshev_iteration}
with a polynomial of degree $k$, and thus we expect a
the time per iteration to be comparable. Note that the kernel can be reused for  the
implementation of the other two polynomial smoothers based on the use of Chebyshev 
polynomials of the 4\textsuperscript{th} kind, thus unifying all implementation variants, 
and enabling the user to concentrate on the effect on the number of 
iterations provided by the better approximation property.

\section{Numerical Examples}\label{sec:results}
To test the different polynomial smoothers discussed in the previous section 
we need to complete the construction of the multigrid methods with a coarsening strategy and a coarse solver. 
We focus on methods available through the \texttt{PSCToolkit} framework. This is a software project aimed to make available parallel Krylov solvers and AMG preconditioners efficient on heterogeneous clusters so that we can leverage on high-throughput multi/many-core processors and large numbers of computing nodes for solving linear systems with a very large number of unknowns and prepare our software framework to face the exascale challenge. {AMG preconditioners are set by coarsening based on aggregation of unknowns relying on two different approaches, as summarized in the following list:}
\begin{description}
	\item[VBM] the classical smoothed aggregation strategy by Van\v{e}k, Mandel, and Brezina \cite{MR1393006}, that is adapted to the distributed setting in a \emph{decoupled} way, i.e., for which the aggregates are determined only on the local part of the matrix of that level;
	\item[Compatible Matching] the aggregation strategy based on graph weighted matching and compatible relaxation~\cite{MR3865827,MR4331965}, that is instead a parallel \emph{coupled} strategy which relies on the computation of an approximate graph weighted matching of the whole adjacency matrix. For this choice the maximum size of the requested aggregates can be selected in input as a power of $2$, i.e., it corresponds on the number of the matching application we perform at each level. {For the results presented in this paper, we used the default setting for the number of matching steps, which corresponds to aggregates of size up to $8$.} The prolongator obtained by this construction can 
	be used as tentative prolongator for a classic smoothing procedure as in \cite{MR1393006}, i.e., if we call $\hat{P}_l$ the tentative prolongator obtained via the matching strategy at level $l$, the one we currently use is
	\[
	P_l = (I - \omega D_l^{-1}A_l)\hat{P}_l,
	\]
	for $D_l$ the diagonal of $A_l$, and $\omega = \nicefrac{4}{3 \lambda_{\max}}$ for $\lambda_{\max}$ an estimate of the largest eigenvalue of $D_l^{-1}A_l$.
\end{description}
For more details on the AMG preconditioners included in \texttt{PSCToolkit} we refer the reader to~\cite{MR4331965,softimpact}. 

Both the types of the AMG hierarchies can be employed as standard $V$-Cycle~\eqref{eq:amg} in the application phase as preconditioner for a Krylov solver.
For the experiments discussed in this paper,
at the coarsest level we applied a fixed (sufficiently large) number of iteration of the highly parallel $\ell_1$-Jacobi method.
\begin{table}[htbp]
	\centering
	\caption{The table contains the building blocks from \texttt{PSCToolkit} we use to build the preconditioners we consider in the experiments. The numerical experiments will consider all possible combinations of Pre/Post-Smoother and aggregation strategies.}
	\label{tab:preconditioners}
	\begin{tabular}{llll}
		\toprule
		Pre/Post-smoother & Aggregation & Cycle & Coarse  \\
		\midrule
		$\ell_1$-Jacobi & VBM & $V$-Cycle & $\ell_1$-Jacobi  \\
		Chebyshev 4\textsuperscript{th}-kind & Compatible Matching& & \\
		Optimized Chebyshev 4\textsuperscript{th}-kind & & &  \\
		Optimized Chebyshev 1\textsuperscript{st}-kind &  &  & \\
		\bottomrule
	\end{tabular}
	
\end{table}
Table~\ref{tab:preconditioners} succinctly summarizes the different building blocks we use.

To test the proposed strategy we consider linear systems~\eqref{eq:sys1} arising from usual benchmarks, as described in the following.
\begin{description}
	\item[Poisson 3D:] the matrix $A$ comes from the usual 7-points discretization of the Poisson equation on the unit cube, when Dirichlet boundary conditions are applied and right-hand side is set equal to the unit vector.
	\item[Anisotropy 2D:] the matrix $A$ comes from an anisotropic 2D thermal problem with Q1-finite element methods on the domain $[-1,1]^2$, and the material conductivity given by the tensor
	\[
	\mathcal{K}_{\epsilon} = \begin{bmatrix}
		1 & 0 \\
		0 & \epsilon
	\end{bmatrix}
	\]
	rotated by an angle $\theta$ around the origin, Dirichlet boundary conditions fixed on the $y = -1$ face of the square, and right-hand side given by the discretization of 
	\[
	f(x,y) = \exp\left(-100(x^2 + y^2)\right).
	\]  
\end{description}

For all cases the iterative method of choice is a variant of the Conjugate Gradient (CG), requiring only one global synchronization for scalar products~\cite{NOTAY2015237}, and is set to achieve a relative tolerance on the error of $tol = 10^{-7}$ within a maximum number of iteration $\textrm{it}_{\max} = 1000$. The experiments were performed on the Leonardo Italian supercomputer, ranked 7th in the June 2024 Top500 list (BullSequana XH2000, Xeon Platinum 8358 32C 2.6GHz, NVIDIA A100 SXM4 64 GB, Quad-rail NVIDIA HDR100 Infiniband). The code, execution scripts and the pointers to the \texttt{PSCToolkit} packages versions used are available on the GitHub repository   \href{https://github.com/Cirdans-Home/polynomialsmoothers}{Cirdans-Home/polynomialsmoothers}.

\subsection{Weak Scalability Experiments}
We conduct weak scalability experiments using the two preconditioners described at the beginning of Section~\ref{sec:results}. More specifically, we maintain a fixed number of degrees of freedom per computational unit (GPU) while linearly increasing both the problem size and the number of GPUs involved, ranging from 64 to 1024 GPUs (i.e., from 16 to 256 computing nodes). In these experiments, the key metrics of interest are the solution time, which indicates the method's efficiency for the user and its utility within more complex simulations; the number of iterations, which illustrates the theoretical properties of the considered polynomial acceleration; and the time per iteration, which measures the efficiency of the method's implementation.

\subsubsection{Poisson 3D}\label{sec:poisson3D}
For linear systems of the Poisson 3D test case, we present results for the multigrid preconditioner based on compatible matching aggregation because it always produces the smallest number of iterations. These results are complemented by additional data in the supplementary materials, which include higher degree polynomial smoothers and the preconditioner based on VBM aggregation. {The problem size per GPU is $6$ million, allowing us to solve problems with over $6$ billion dofs using $1024$ GPUs.}

\paragraph{Compatible Matching}

We first look at the solution time with different smoothers as the number of GPUs increases;
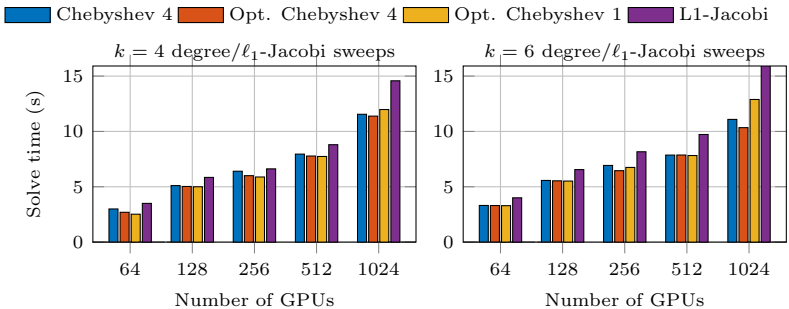
\begin{figure}[htbp]
	\centering
%
%
\definecolor{mycolor1}{rgb}{0.00000,0.44700,0.74100}%
\definecolor{mycolor2}{rgb}{0.85000,0.32500,0.09800}%
\definecolor{mycolor3}{rgb}{0.92900,0.69400,0.12500}%
\definecolor{mycolor4}{rgb}{0.49400,0.18400,0.55600}%
\begin{tikzpicture}

\begin{axis}[%
width=0.33\columnwidth,
height=0.18\columnwidth,
at={(0\columnwidth,0.52\columnwidth)},
scale only axis,
bar shift auto,
ybar=2*\pgflinewidth,
xmin=0.4,
xmax=5.6,
xtick={1,2,3,4,5},
xticklabels={{64},{128},{256},{512},{1024}},ticklabel style = {font=\scriptsize},
xlabel style={font=\scriptsize},
xlabel={Number of GPUs},
ymin=0,
ymax=15.911,
ylabel style={font=\scriptsize},
ylabel={Solve time (s)},
axis background/.style={fill=white},
title style={font=\scriptsize,yshift=-0.8em},
title={$k=4$ degree/$\ell_1$-Jacobi sweeps},
xmajorgrids,
ymajorgrids,
legend columns=4,
legend style={at={(-0.3,1.4)}, anchor=north west, legend cell align=left, align=left, draw=none, fill=none, font=\scriptsize}
]
\addplot[ybar, bar width=0.145, fill=mycolor1, draw=black, area legend] table[row sep=crcr] {%
1	2.99555\\
2	5.10844\\
3	6.40426\\
4	7.95049\\
5	11.5501\\
};
\addplot[forget plot, color=white!15!black] table[row sep=crcr] {%
0.654545454545455	0\\
5.34545454545454	0\\
};
\addlegendentry{Chebyshev 4}

\addplot[ybar, bar width=0.145, fill=mycolor2, draw=black, area legend] table[row sep=crcr] {%
1	2.6942\\
2	5.02838\\
3	6.00266\\
4	7.77814\\
5	11.384\\
};
\addplot[forget plot, color=white!15!black] table[row sep=crcr] {%
0.654545454545455	0\\
5.34545454545454	0\\
};
\addlegendentry{Opt. Chebyshev 4}

\addplot[ybar, bar width=0.145, fill=mycolor3, draw=black, area legend] table[row sep=crcr] {%
1	2.5222\\
2	5.0013\\
3	5.8837\\
4	7.73889\\
5	11.9796\\
};
\addplot[forget plot, color=white!15!black] table[row sep=crcr] {%
0.654545454545455	0\\
5.34545454545454	0\\
};
\addlegendentry{Opt. Chebyshev 1}

\addplot[ybar, bar width=0.145, fill=mycolor4, draw=black, area legend] table[row sep=crcr] {%
1	3.49991\\
2	5.84694\\
3	6.61824\\
4	8.80121\\
5	14.5746\\
};
\addplot[forget plot, color=white!15!black] table[row sep=crcr] {%
0.654545454545455	0\\
5.34545454545454	0\\
};
\addlegendentry{L1-Jacobi}

\end{axis}

\begin{axis}[%
width=0.33\columnwidth,
height=0.18\columnwidth,
at={(0.378\columnwidth,0.52\columnwidth)},
scale only axis,
bar shift auto,
ybar=2*\pgflinewidth,
xmin=0.4,
xmax=5.6,
xtick={1,2,3,4,5},
xticklabels={{64},{128},{256},{512},{1024}},ticklabel style = {font=\scriptsize},
xlabel style={font=\scriptsize},
xlabel={Number of GPUs},
ymin=0,
ymax=15.911,
ylabel style={font=\scriptsize},
axis background/.style={fill=white},
title style={font=\scriptsize,yshift=-0.8em},
title={$k=6$ degree/$\ell_1$-Jacobi sweeps},
xmajorgrids,
ymajorgrids
]
\addplot[ybar, bar width=0.145, fill=mycolor1, draw=black, area legend] table[row sep=crcr] {%
1	3.31203\\
2	5.57183\\
3	6.92748\\
4	7.86276\\
5	11.0869\\
};
\addplot[forget plot, color=white!15!black] table[row sep=crcr] {%
0.654545454545455	0\\
5.34545454545454	0\\
};
\addplot[ybar, bar width=0.145, fill=mycolor2, draw=black, area legend] table[row sep=crcr] {%
1	3.30466\\
2	5.53617\\
3	6.45388\\
4	7.86843\\
5	10.3415\\
};
\addplot[forget plot, color=white!15!black] table[row sep=crcr] {%
0.654545454545455	0\\
5.34545454545454	0\\
};
\addplot[ybar, bar width=0.145, fill=mycolor3, draw=black, area legend] table[row sep=crcr] {%
1	3.29101\\
2	5.51514\\
3	6.7484\\
4	7.82342\\
5	12.888\\
};
\addplot[forget plot, color=white!15!black] table[row sep=crcr] {%
0.654545454545455	0\\
5.34545454545454	0\\
};
\addplot[ybar, bar width=0.145, fill=mycolor4, draw=black, area legend] table[row sep=crcr] {%
1	4.00202\\
2	6.55137\\
3	8.16297\\
4	9.72351\\
5	15.911\\
};
\addplot[forget plot, color=white!15!black] table[row sep=crcr] {%
0.654545454545455	0\\
5.34545454545454	0\\
};
\end{axis}

\end{tikzpicture}%
	
	\caption{Solve time (s). Time to solution (s) for the Poisson 3D test case employing the multigrid scheme based on the compatible matching aggregation in conjunction with the different polynomial accelerators and the baseline $\ell_1$-Jacobi for comparison.}
	\label{fig:3d_1024_match_tsolve_reduced}
\end{figure}
As expected, all the polynomial accelerators reduces the solve time with respect to the baseline $\ell_1$-Jacobi; the reduction is more significant as the problem size increases and as polynomial degree (number of baseline iterations) increases. For a low degree ($k=4$), the quasi-optimal 1\textsuperscript{st}-kind Chebyshev polynomial is better or comparable with the optimized 4\textsuperscript{th}-kind ones. Their application produces the best solution times for all the
number of GPUs but the largest one, where we observe the best solve time for the optimized 4\textsuperscript{th}-kind polynomials when $k=6$. 
This confirms that having a polynomial smoother that performs well for low polynomial degrees can be extremely effective. In other words, the initial transient behavior of the estimate turns out to be more relevant than the asymptotic behavior in the degree. 
Notably, the proposed method significantly improves performance. It results
in a solve time reduction ranging from $\approx 18\%$ to $\approx 35 \%$ for the largest problem size, {which corresponds to a proportional reduction in energy consumption}.
This gain in the solve times comes from the better convergence properties of the polynomial accelerators and from the efficiency of their GPU implementation, as shown in Fig.~\ref{fig:3d_1024_match_tsolve_reduced} and Fig.~\ref{fig:3d_1024_match_timeperiteration_reduced}, respectively.
Indeed, if we investigate the number of iterations, we observe that, Fig.~\ref{fig:3d_1024_match_itercount_reduced} is in a good agreement with the bound behaviour in  Fig.~\ref{fig:boundcomparison}; indeed, the quasi-optimal 1\textsuperscript{st}-kind Chebyshev polynomial and the optimized 4\textsuperscript{th}-kind ones return the same number of iterations for $k=4$, while in the case of $k=6$, the optimized 4\textsuperscript{th}-kind polynomials reduces the number of iterations more than the other approaches when solving the largest size problem. In particular, we observe that increasing polynomial degree
from $k=4$ to $k=6$ reduces the number of iterations from $18$ to $14$ in the case of the optimized 4\textsuperscript{th}-kind 
while we go from $18$ to $15$ for the quasi-optimal 1\textsuperscript{st}-kind. On the contrary the baseline is able to reduce the iterations only from $21$ to $20$. The times per iteration of the different polynomial accelerators
remain fully comparable to those of the $\ell_1$-Jacobi method, showing that the advantage in the approximation properties of the polynomial smoother does not increase the computational cost of the entire solution procedure.
\begin{figure}[htbp]
	\centering
%
%
\definecolor{mycolor1}{rgb}{0.00000,0.44700,0.74100}%
\definecolor{mycolor2}{rgb}{0.85000,0.32500,0.09800}%
\definecolor{mycolor3}{rgb}{0.92900,0.69400,0.12500}%
\definecolor{mycolor4}{rgb}{0.49400,0.18400,0.55600}%
\begin{tikzpicture}

\begin{axis}[%
width=0.33\columnwidth,
height=0.18\columnwidth,
at={(0\columnwidth,0.519\columnwidth)},
scale only axis,
bar shift auto,
ybar=2*\pgflinewidth,
xmin=0.4,
xmax=5.6,
xtick={1,2,3,4,5},
xticklabels={{64},{128},{256},{512},{1024}},ticklabel style = {font=\scriptsize},
ymin=0,
ymax=21,
ylabel style={font=\scriptsize},
ylabel={Iterations},
xlabel style={font=\scriptsize},
xlabel={Number of GPUs},
axis background/.style={fill=white},
title style={font=\scriptsize,yshift=-0.8em},
title={$k=4$ degree/$\ell_1$-Jacobi sweeps},
xmajorgrids,
ymajorgrids,
legend columns=4,
legend style={at={(-0.3,1.4)}, anchor=north west, legend cell align=left, align=left, draw=none, fill=none, font=\scriptsize}
]
\addplot[ybar, bar width=0.145, fill=mycolor1, draw=black, area legend] table[row sep=crcr] {%
1	10\\
2	11\\
3	17\\
4	14\\
5	18\\
};
\addplot[forget plot, color=white!15!black] table[row sep=crcr] {%
0.654545454545455	0\\
5.34545454545454	0\\
};
\addlegendentry{Chebyshev 4}

\addplot[ybar, bar width=0.145, fill=mycolor2, draw=black, area legend] table[row sep=crcr] {%
1	9\\
2	11\\
3	16\\
4	14\\
5	18\\
};
\addplot[forget plot, color=white!15!black] table[row sep=crcr] {%
0.654545454545455	0\\
5.34545454545454	0\\
};
\addlegendentry{Opt. Chebyshev 4}

\addplot[ybar, bar width=0.145, fill=mycolor3, draw=black, area legend] table[row sep=crcr] {%
1	9\\
2	11\\
3	16\\
4	14\\
5	18\\
};
\addplot[forget plot, color=white!15!black] table[row sep=crcr] {%
0.654545454545455	0\\
5.34545454545454	0\\
};
\addlegendentry{Opt. Chebyshev 1}

\addplot[ybar, bar width=0.145, fill=mycolor4, draw=black, area legend] table[row sep=crcr] {%
1	12\\
2	13\\
3	18\\
4	16\\
5	21\\
};
\addplot[forget plot, color=white!15!black] table[row sep=crcr] {%
0.654545454545455	0\\
5.34545454545454	0\\
};
\addlegendentry{L1-Jacobi}

\end{axis}

\begin{axis}[%
width=0.33\columnwidth,
height=0.18\columnwidth,
at={(0.378\columnwidth,0.519\columnwidth)},
scale only axis,
bar shift auto,
ybar=2*\pgflinewidth,
xmin=0.4,
xmax=5.6,
xtick={1,2,3,4,5},
xticklabels={{64},{128},{256},{512},{1024}},ticklabel style = {font=\scriptsize},
xlabel style={font=\scriptsize},
xlabel={Number of GPUs},
ymin=0,
ymax=21,
ylabel style={font=\scriptsize},
axis background/.style={fill=white},
title style={font=\scriptsize,yshift=-0.8em},
title={$k=6$ degree/$\ell_1$-Jacobi sweeps},
xmajorgrids,
ymajorgrids
]
\addplot[ybar, bar width=0.145, fill=mycolor1, draw=black, area legend] table[row sep=crcr] {%
1	9\\
2	10\\
3	15\\
4	12\\
5	15\\
};
\addplot[forget plot, color=white!15!black] table[row sep=crcr] {%
0.654545454545455	0\\
5.34545454545454	0\\
};
\addplot[ybar, bar width=0.145, fill=mycolor2, draw=black, area legend] table[row sep=crcr] {%
1	9\\
2	10\\
3	14\\
4	12\\
5	14\\
};
\addplot[forget plot, color=white!15!black] table[row sep=crcr] {%
0.654545454545455	0\\
5.34545454545454	0\\
};
\addplot[ybar, bar width=0.145, fill=mycolor3, draw=black, area legend] table[row sep=crcr] {%
1	9\\
2	10\\
3	15\\
4	12\\
5	16\\
};
\addplot[forget plot, color=white!15!black] table[row sep=crcr] {%
0.654545454545455	0\\
5.34545454545454	0\\
};
\addplot[ybar, bar width=0.145, fill=mycolor4, draw=black, area legend] table[row sep=crcr] {%
1	11\\
2	12\\
3	18\\
4	15\\
5	20\\
};
\addplot[forget plot, color=white!15!black] table[row sep=crcr] {%
0.654545454545455	0\\
5.34545454545454	0\\
};
\end{axis}

\end{tikzpicture}%
	
	\caption{Iteration count. Total number of CG iterations for the Poisson 3D test case employing the multigrid scheme based on the compatible matching aggregation in conjunction with the different polynomial accelerators and the baseline $\ell_1$-Jacobi for comparison.}
	\label{fig:3d_1024_match_itercount_reduced}
\end{figure}
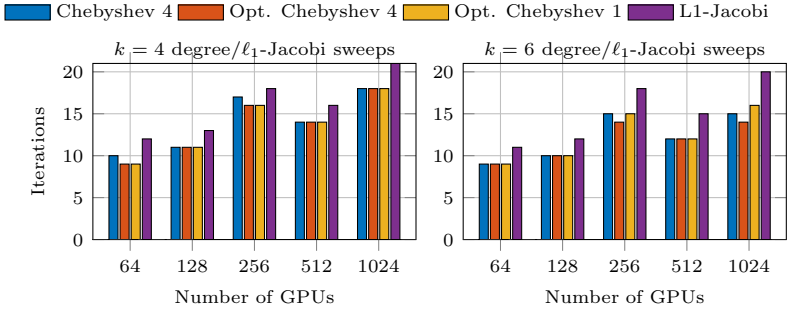
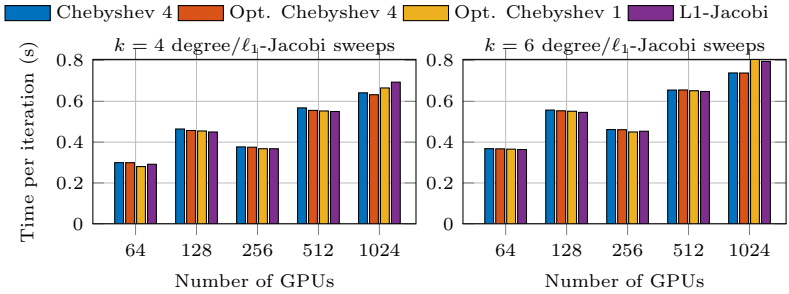
\begin{figure}[htbp]
	\centering
%
%
\definecolor{mycolor1}{rgb}{0.00000,0.44700,0.74100}%
\definecolor{mycolor2}{rgb}{0.85000,0.32500,0.09800}%
\definecolor{mycolor3}{rgb}{0.92900,0.69400,0.12500}%
\definecolor{mycolor4}{rgb}{0.49400,0.18400,0.55600}%
\begin{tikzpicture}

\begin{axis}[%
width=0.33\columnwidth,
height=0.168\columnwidth,
at={(0\columnwidth,0.519\columnwidth)},
scale only axis,
bar shift auto,
ybar=2*\pgflinewidth,
xmin=0.3,
xmax=5.6,
xtick={1,2,3,4,5},
xticklabels={{64},{128},{256},{512},{1024}},ticklabel style = {font=\scriptsize},
ymin=0,
ymax=0.805499,
ylabel style={font=\scriptsize},
ylabel={Time per iteration (s)},
xlabel style={font=\scriptsize},
xlabel={Number of GPUs},
axis background/.style={fill=white},
title style={font=\scriptsize,yshift=-0.8em},
title={$k=4$ degree/$\ell_1$-Jacobi sweeps},
xmajorgrids,
ymajorgrids,
legend columns=4,
legend style={at={(-0.3,1.4)}, anchor=north west, legend cell align=left, align=left, draw=none, fill=none, font=\scriptsize}
]
\addplot[ybar, bar width=0.145, fill=mycolor1, draw=black, area legend] table[row sep=crcr] {%
1	0.299555\\
2	0.464404\\
3	0.376721\\
4	0.567892\\
5	0.64167\\
};
\addplot[forget plot, color=white!15!black] table[row sep=crcr] {%
0.654545454545455	0\\
5.34545454545454	0\\
};
\addlegendentry{Chebyshev 4}

\addplot[ybar, bar width=0.145, fill=mycolor2, draw=black, area legend] table[row sep=crcr] {%
1	0.299355\\
2	0.457125\\
3	0.375166\\
4	0.555581\\
5	0.632442\\
};
\addplot[forget plot, color=white!15!black] table[row sep=crcr] {%
0.654545454545455	0\\
5.34545454545454	0\\
};
\addlegendentry{Opt. Chebyshev 4}

\addplot[ybar, bar width=0.145, fill=mycolor3, draw=black, area legend] table[row sep=crcr] {%
1	0.280245\\
2	0.454664\\
3	0.367731\\
4	0.552778\\
5	0.665533\\
};
\addplot[forget plot, color=white!15!black] table[row sep=crcr] {%
0.654545454545455	0\\
5.34545454545454	0\\
};
\addlegendentry{Opt. Chebyshev 1}

\addplot[ybar, bar width=0.145, fill=mycolor4, draw=black, area legend] table[row sep=crcr] {%
1	0.291659\\
2	0.449765\\
3	0.36768\\
4	0.550076\\
5	0.694026\\
};
\addplot[forget plot, color=white!15!black] table[row sep=crcr] {%
0.654545454545455	0\\
5.34545454545454	0\\
};
\addlegendentry{L1-Jacobi}

\end{axis}

\begin{axis}[%
width=0.33\columnwidth,
height=0.168\columnwidth,
at={(0.378\columnwidth,0.519\columnwidth)},
scale only axis,
bar shift auto,
ybar=2*\pgflinewidth,
xmin=0.3,
xmax=5.6,
xtick={1,2,3,4,5},
xticklabels={{64},{128},{256},{512},{1024}},ticklabel style = {font=\scriptsize},
xlabel style={font=\scriptsize},
xlabel={Number of GPUs},
ymin=0,
ymax=0.805499,
axis background/.style={fill=white},
title style={font=\scriptsize,yshift=-0.8em},
title={$k=6$ degree/$\ell_1$-Jacobi sweeps},
xmajorgrids,
ymajorgrids
]
\addplot[ybar, bar width=0.145, fill=mycolor1, draw=black, area legend] table[row sep=crcr] {%
1	0.368003\\
2	0.557183\\
3	0.461832\\
4	0.65523\\
5	0.739126\\
};
\addplot[forget plot, color=white!15!black] table[row sep=crcr] {%
0.654545454545455	0\\
5.34545454545454	0\\
};
\addplot[ybar, bar width=0.145, fill=mycolor2, draw=black, area legend] table[row sep=crcr] {%
1	0.367185\\
2	0.553617\\
3	0.460992\\
4	0.655702\\
5	0.738682\\
};
\addplot[forget plot, color=white!15!black] table[row sep=crcr] {%
0.654545454545455	0\\
5.34545454545454	0\\
};
\addplot[ybar, bar width=0.145, fill=mycolor3, draw=black, area legend] table[row sep=crcr] {%
1	0.365668\\
2	0.551514\\
3	0.449893\\
4	0.651952\\
5	0.805499\\
};
\addplot[forget plot, color=white!15!black] table[row sep=crcr] {%
0.654545454545455	0\\
5.34545454545454	0\\
};
\addplot[ybar, bar width=0.145, fill=mycolor4, draw=black, area legend] table[row sep=crcr] {%
1	0.36382\\
2	0.545948\\
3	0.453498\\
4	0.648234\\
5	0.79555\\
};
\addplot[forget plot, color=white!15!black] table[row sep=crcr] {%
0.654545454545455	0\\
5.34545454545454	0\\
};
\end{axis}

\end{tikzpicture}%
	
	\caption{Time per iteration. Time per iteration for the Poisson 3D test case employing the multigrid scheme based on the compatible matching aggregation in conjunction with the different polynomial accelerators and the baseline $\ell_1$-Jacobi for comparison.}
	\label{fig:3d_1024_match_timeperiteration_reduced}
\end{figure}

\subsubsection{Anisotropy 2D}\label{sec:anisotropy2D}
We examine the problem based on the 2D diffusion operator with anisotropy, specifically for an anisotropy angle of $\theta = \pi/6$ and a coefficient value of $\epsilon = 100$. In this case, the best results both in terms of iterations and time to solve are obtained when the multigrid preconditioner setup is based on the VBM classical smoothed aggregation. These experiments are supplemented by additional results in the supplementary materials, which include higher polynomial degrees and multigrid with aggregation based on matching.
{The problem size per GPU is about $4$ million, allowing us to solve problems with over $4$ billion dofs using $1024$ GPUs.}

\paragraph{VBM} As in the previous section, in Fig.~\ref{fig:anisotropy_1024_soc1_tsolve_reduced} we report the time needed to reach the solution of the linear system by keeping the multigrid preconditioner hierarchy fixed and changing the smoother and the polynomial smoother degree (or number of smoothing iterations for the $\ell_1$-Jacobi).
\begin{figure}[htbp]
	\centering
%
%
\definecolor{mycolor1}{rgb}{0.00000,0.44700,0.74100}%
\definecolor{mycolor2}{rgb}{0.85000,0.32500,0.09800}%
\definecolor{mycolor3}{rgb}{0.92900,0.69400,0.12500}%
\definecolor{mycolor4}{rgb}{0.49400,0.18400,0.55600}%
\begin{tikzpicture}

\begin{axis}[%
width=0.33\columnwidth,
height=0.18\columnwidth,
at={(0\columnwidth,0.466\columnwidth)},
scale only axis,
bar shift auto,
xmin=0.4,
xmax=5.6,
xtick={1,2,3,4,5},
xticklabels={{64},{128},{256},{512},{1024}},ticklabel style = {font=\scriptsize},
xlabel style={font=\scriptsize},
xlabel={Number of GPUs},
ymin=0,
ymax=295.042,
ybar=2*\pgflinewidth,
ylabel style={font=\scriptsize},
ylabel={Solve time (s)},
axis background/.style={fill=white},
title style={font=\scriptsize,yshift=-0.8em},
title={$k = 4$ degree/$\ell_1$-Jacobi sweeps},
xmajorgrids,
ymajorgrids,
legend columns=4,
legend style={at={(-0.3,1.4)}, anchor=north west, legend cell align=left, align=left, draw=none, fill=none, font=\scriptsize}
]
\addplot[ybar, bar width=0.145, fill=mycolor1, draw=black, area legend] table[row sep=crcr] {%
1	30.1411\\
2	44.9588\\
3	58.2224\\
4	96.3833\\
5	156.201\\
};
\addplot[forget plot, color=white!15!black] table[row sep=crcr] {%
0.654545454545455	0\\
5.34545454545454	0\\
};
\addlegendentry{Chebyshev 4}

\addplot[ybar, bar width=0.145, fill=mycolor2, draw=black, area legend] table[row sep=crcr] {%
1	30.2535\\
2	42.6847\\
3	58.8012\\
4	96.4112\\
5	150.529\\
};
\addplot[forget plot, color=white!15!black] table[row sep=crcr] {%
0.654545454545455	0\\
5.34545454545454	0\\
};
\addlegendentry{Opt. Chebyshev 4}

\addplot[ybar, bar width=0.145, fill=mycolor3, draw=black, area legend] table[row sep=crcr] {%
1	29.5266\\
2	44.2639\\
3	59.34\\
4	96.0507\\
5	158.523\\
};
\addplot[forget plot, color=white!15!black] table[row sep=crcr] {%
0.654545454545455	0\\
5.34545454545454	0\\
};
\addlegendentry{Opt. Chebyshev 1}

\addplot[ybar, bar width=0.145, fill=mycolor4, draw=black, area legend] table[row sep=crcr] {%
1	34.5084\\
2	47.7808\\
3	59.4805\\
4	99.6036\\
5	164.831\\
};
\addplot[forget plot, color=white!15!black] table[row sep=crcr] {%
0.654545454545455	0\\
5.34545454545454	0\\
};
\addlegendentry{$\ell_1$-Jacobi}

\end{axis}

\begin{axis}[%
width=0.33\columnwidth,
height=0.18\columnwidth,
at={(0.378\columnwidth,0.466\columnwidth)},
scale only axis,
bar shift auto,
ybar=2*\pgflinewidth,
xmin=0.4,
xmax=5.6,
xtick={1,2,3,4,5},
xticklabels={{64},{128},{256},{512},{1024}},ticklabel style = {font=\scriptsize},
xlabel style={font=\scriptsize},
xlabel={Number of GPUs},
ymin=0,
ymax=295.042,
axis background/.style={fill=white},
title style={font=\scriptsize,yshift=-0.8em},
title={$k = 6$ degree/$\ell_1$-Jacobi sweeps},
xmajorgrids,
ymajorgrids
]
\addplot[ybar, bar width=0.145, fill=mycolor1, draw=black, area legend] table[row sep=crcr] {%
1	37.6931\\
2	53.9501\\
3	63.6195\\
4	117.123\\
5	175.691\\
};
\addplot[forget plot, color=white!15!black] table[row sep=crcr] {%
0.654545454545455	0\\
5.34545454545454	0\\
};
\addplot[ybar, bar width=0.145, fill=mycolor2, draw=black, area legend] table[row sep=crcr] {%
1	35.7975\\
2	50.807\\
3	65.0002\\
4	109.008\\
5	177.742\\
};
\addplot[forget plot, color=white!15!black] table[row sep=crcr] {%
0.654545454545455	0\\
5.34545454545454	0\\
};
\addplot[ybar, bar width=0.145, fill=mycolor3, draw=black, area legend] table[row sep=crcr] {%
1	36.6786\\
2	51.8935\\
3	69.8769\\
4	112.191\\
5	188.397\\
};
\addplot[forget plot, color=white!15!black] table[row sep=crcr] {%
0.654545454545455	0\\
5.34545454545454	0\\
};
\addplot[ybar, bar width=0.145, fill=mycolor4, draw=black, area legend] table[row sep=crcr] {%
1	39.027\\
2	58.439\\
3	74.4741\\
4	120.732\\
5	221.694\\
};
\addplot[forget plot, color=white!15!black] table[row sep=crcr] {%
0.654545454545455	0\\
5.34545454545454	0\\
};
\end{axis}

\end{tikzpicture}%
	\caption{Solve time (s). Time to solution (s) for the Anisotropy 2D test case $(\varepsilon=100,\theta=\pi/6)$ employing the multigrid scheme based on the VBM aggregation in conjunction with the different polynomial accelerators and the baseline $\ell_1$-Jacobi for comparison.}
	\label{fig:anisotropy_1024_soc1_tsolve_reduced}
\end{figure}
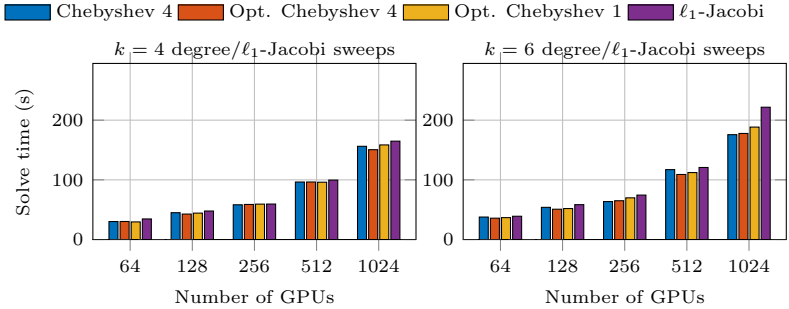
As for the previous test case, using the polynomial accelerations generally reduces the solve times although the percentage of the reduction is smaller ({around $10 \%$ for the best case}) than in the results discussed in the previous section. This demonstrates a smaller impact of the quality of the smoother in this type of preconditioner based on classical smoothed aggregation, as also shown in the results on the Poisson 3D test case reported in the supplementary materials.
The time needed to reach a solution is always lower for the preconditioner using the lightest smoother, i.e., the one with the smallest degree of the polynomial acceleration or, equivalently, the smallest number of matrix-vector products. It is generally obtained when the optimized 4\textsuperscript{th}-kind polynomials are applied when the number of GPUs increases. Indeed, if we turn our attention to Fig.~\ref{fig:anisotropy_1024_soc1_itercount_reduced}, the version based on the optimized 4\textsuperscript{th}-kind polynomials produces the smallest number of iterations for both the polynomial degrees, while, as expected, the quasi-optimal 1\textsuperscript{st}-kind ones are comparable to the optimized 4\textsuperscript{th}-kind polynomials when the lowest degree is used. Applying polynomials accelerators with a larger degree rewards in terms of number of iteration to obtain the given accuracy, but does not pay in terms of total solve times, i.e., the reduction in the number of iterations is not sufficient to balance the cost of the increasing application cost per iteration as shown in Fig.~\ref{fig:anisotropy_1024_soc1_timeperiteration_reduced}. This cost is generally consistent with that of applying a corresponding number of iterations of the $\ell_1$-Jacobi method. That is, we can confirm that from a node-level efficiency perspective and thanks to the careful implementation of shared memory accesses and GPU threads, the use of polynomial smoothers with better approximation capability is achieved practically at a cost that falls within the machine's oscillations.
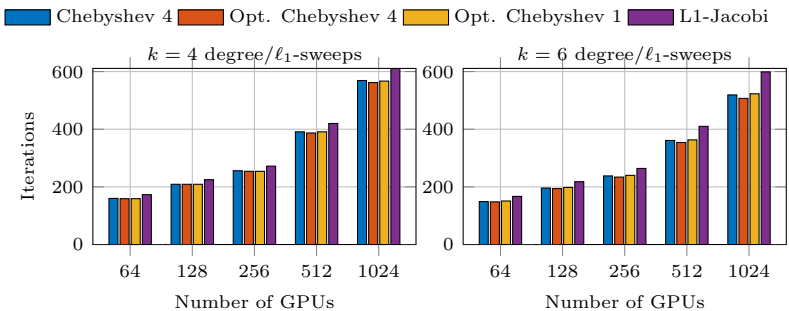
\begin{figure}[htbp]
	\centering
%
%
\definecolor{mycolor1}{rgb}{0.00000,0.44700,0.74100}%
\definecolor{mycolor2}{rgb}{0.85000,0.32500,0.09800}%
\definecolor{mycolor3}{rgb}{0.92900,0.69400,0.12500}%
\definecolor{mycolor4}{rgb}{0.49400,0.18400,0.55600}%
\begin{tikzpicture}

\begin{axis}[%
width=0.33\columnwidth,
height=0.18\columnwidth,
at={(0\columnwidth,0.463\columnwidth)},
scale only axis,
bar shift auto,
xmin=0.4,
xmax=5.6,
xtick={1,2,3,4,5},
xticklabels={{64},{128},{256},{512},{1024}},ticklabel style = {font=\scriptsize},
xlabel style={font=\scriptsize},
ybar=2*\pgflinewidth,
xlabel style={font=\scriptsize},
xlabel={Number of GPUs},
ymin=0,
ymax=611,
ylabel style={font=\scriptsize},
ylabel={Iterations},
axis background/.style={fill=white},
title style={font=\scriptsize,yshift=-0.8em},
title={$k=4$ degree/$\ell_1$-sweeps},
xmajorgrids,
ymajorgrids,
legend columns=4,
legend style={at={(-0.3,1.4)}, anchor=north west, legend cell align=left, align=left, draw=none, fill=none, font=\scriptsize}
]
\addplot[ybar, bar width=0.145, fill=mycolor1, draw=black, area legend] table[row sep=crcr] {%
1	160\\
2	209\\
3	256\\
4	391\\
5	569\\
};
\addplot[forget plot, color=white!15!black] table[row sep=crcr] {%
0.654545454545455	0\\
5.34545454545454	0\\
};
\addlegendentry{Chebyshev 4}

\addplot[ybar, bar width=0.145, fill=mycolor2, draw=black, area legend] table[row sep=crcr] {%
1	159\\
2	209\\
3	254\\
4	387\\
5	562\\
};
\addplot[forget plot, color=white!15!black] table[row sep=crcr] {%
0.654545454545455	0\\
5.34545454545454	0\\
};
\addlegendentry{Opt. Chebyshev 4}

\addplot[ybar, bar width=0.145, fill=mycolor3, draw=black, area legend] table[row sep=crcr] {%
1	159\\
2	209\\
3	254\\
4	391\\
5	567\\
};
\addplot[forget plot, color=white!15!black] table[row sep=crcr] {%
0.654545454545455	0\\
5.34545454545454	0\\
};
\addlegendentry{Opt. Chebyshev 1}

\addplot[ybar, bar width=0.145, fill=mycolor4, draw=black, area legend] table[row sep=crcr] {%
1	173\\
2	225\\
3	272\\
4	420\\
5	611\\
};
\addplot[forget plot, color=white!15!black] table[row sep=crcr] {%
0.654545454545455	0\\
5.34545454545454	0\\
};
\addlegendentry{L1-Jacobi}

\end{axis}

\begin{axis}[%
width=0.33\columnwidth,
height=0.18\columnwidth,
at={(0.378\columnwidth,0.463\columnwidth)},
scale only axis,
bar shift auto,
xmin=0.4,
xmax=5.6,
xtick={1,2,3,4,5},
xticklabels={{64},{128},{256},{512},{1024}},ticklabel style = {font=\scriptsize},
xlabel style={font=\scriptsize},
xlabel={Number of GPUs},
ybar=2*\pgflinewidth,
ymin=0,
ymax=611,
axis background/.style={fill=white},
title style={font=\scriptsize,yshift=-0.8em},
title={$k=6$ degree/$\ell_1$-sweeps},
xmajorgrids,
ymajorgrids
]
\addplot[ybar, bar width=0.145, fill=mycolor1, draw=black, area legend] table[row sep=crcr] {%
1	149\\
2	196\\
3	238\\
4	361\\
5	519\\
};
\addplot[forget plot, color=white!15!black] table[row sep=crcr] {%
0.654545454545455	0\\
5.34545454545454	0\\
};
\addplot[ybar, bar width=0.145, fill=mycolor2, draw=black, area legend] table[row sep=crcr] {%
1	148\\
2	194\\
3	234\\
4	354\\
5	507\\
};
\addplot[forget plot, color=white!15!black] table[row sep=crcr] {%
0.654545454545455	0\\
5.34545454545454	0\\
};
\addplot[ybar, bar width=0.145, fill=mycolor3, draw=black, area legend] table[row sep=crcr] {%
1	151\\
2	198\\
3	240\\
4	363\\
5	523\\
};
\addplot[forget plot, color=white!15!black] table[row sep=crcr] {%
0.654545454545455	0\\
5.34545454545454	0\\
};
\addplot[ybar, bar width=0.145, fill=mycolor4, draw=black, area legend] table[row sep=crcr] {%
1	167\\
2	218\\
3	264\\
4	410\\
5	599\\
};
\addplot[forget plot, color=white!15!black] table[row sep=crcr] {%
0.654545454545455	0\\
5.34545454545454	0\\
};
\end{axis}

\end{tikzpicture}%
	\caption{Iteration count. Total number of CG iterations for the Anisotropy 2D test case $(\varepsilon=100,\theta=\pi/6)$ employing the multigrid scheme based on the VBM aggregation in conjunction with the different polynomial accelerators and the baseline $\ell_1$-Jacobi for comparison.}
	\label{fig:anisotropy_1024_soc1_itercount_reduced}
\end{figure}
\begin{figure}[htbp]
	\centering
%
%
\definecolor{mycolor1}{rgb}{0.00000,0.44700,0.74100}%
\definecolor{mycolor2}{rgb}{0.85000,0.32500,0.09800}%
\definecolor{mycolor3}{rgb}{0.92900,0.69400,0.12500}%
\definecolor{mycolor4}{rgb}{0.49400,0.18400,0.55600}%
\begin{tikzpicture}

\begin{axis}[%
width=0.33\columnwidth,
height=0.18\columnwidth,
at={(0\columnwidth,0.467\columnwidth)},
scale only axis,
bar shift auto,
xmin=0.4,
xmax=5.6,
xtick={1,2,3,4,5},
xticklabels={{64},{128},{256},{512},{1024}},ticklabel style = {font=\scriptsize},
xlabel style={font=\scriptsize},
xlabel={Number of GPUs},
ymin=0,
ymax=0.535495,
ybar=2*\pgflinewidth,
ylabel style={font=\scriptsize},
ylabel={Time per iteration (s)},
axis background/.style={fill=white},
title style={font=\scriptsize,yshift=-0.8em},
title={$k = 4$ degree/$\ell_1$-Jacobi sweeps},
xmajorgrids,
ymajorgrids,
legend columns=4,
legend style={at={(-0.3,1.4)}, anchor=north west, legend cell align=left, align=left, draw=none, fill=none, font=\scriptsize}
]
\addplot[ybar, bar width=0.145, fill=mycolor1, draw=black, area legend] table[row sep=crcr] {%
1	0.188382\\
2	0.215114\\
3	0.227431\\
4	0.246505\\
5	0.274518\\
};
\addplot[forget plot, color=white!15!black] table[row sep=crcr] {%
0.654545454545455	0\\
5.34545454545454	0\\
};
\addlegendentry{Chebyshev 4}

\addplot[ybar, bar width=0.145, fill=mycolor2, draw=black, area legend] table[row sep=crcr] {%
1	0.190274\\
2	0.204233\\
3	0.231501\\
4	0.249125\\
5	0.267846\\
};
\addplot[forget plot, color=white!15!black] table[row sep=crcr] {%
0.654545454545455	0\\
5.34545454545454	0\\
};
\addlegendentry{Opt. Chebyshev 4}

\addplot[ybar, bar width=0.145, fill=mycolor3, draw=black, area legend] table[row sep=crcr] {%
1	0.185702\\
2	0.211789\\
3	0.233622\\
4	0.245654\\
5	0.279582\\
};
\addplot[forget plot, color=white!15!black] table[row sep=crcr] {%
0.654545454545455	0\\
5.34545454545454	0\\
};
\addlegendentry{Opt. Chebyshev 1}

\addplot[ybar, bar width=0.145, fill=mycolor4, draw=black, area legend] table[row sep=crcr] {%
1	0.199471\\
2	0.212359\\
3	0.218678\\
4	0.237151\\
5	0.269773\\
};
\addplot[forget plot, color=white!15!black] table[row sep=crcr] {%
0.654545454545455	0\\
5.34545454545454	0\\
};
\addlegendentry{L1-Jacobi}

\end{axis}

\begin{axis}[%
width=0.33\columnwidth,
height=0.18\columnwidth,
at={(0.378\columnwidth,0.467\columnwidth)},
scale only axis,
bar shift auto,
xmin=0.4,
xmax=5.6,
xtick={1,2,3,4,5},
xticklabels={{64},{128},{256},{512},{1024}},ticklabel style = {font=\scriptsize},
ymin=0,
ymax=0.535495,
ybar=2*\pgflinewidth,
ylabel style={font=\scriptsize},
xlabel style={font=\scriptsize},
xlabel={Number of GPUs},
axis background/.style={fill=white},
title style={font=\scriptsize,yshift=-0.8em},
title={$k = 6$ degree/$\ell_1$-Jacobi sweeps},
xmajorgrids,
ymajorgrids
]
\addplot[ybar, bar width=0.145, fill=mycolor1, draw=black, area legend] table[row sep=crcr] {%
1	0.252974\\
2	0.275256\\
3	0.267309\\
4	0.324439\\
5	0.338519\\
};
\addplot[forget plot, color=white!15!black] table[row sep=crcr] {%
0.654545454545455	0\\
5.34545454545454	0\\
};
\addplot[ybar, bar width=0.145, fill=mycolor2, draw=black, area legend] table[row sep=crcr] {%
1	0.241875\\
2	0.261892\\
3	0.277779\\
4	0.307932\\
5	0.350576\\
};
\addplot[forget plot, color=white!15!black] table[row sep=crcr] {%
0.654545454545455	0\\
5.34545454545454	0\\
};
\addplot[ybar, bar width=0.145, fill=mycolor3, draw=black, area legend] table[row sep=crcr] {%
1	0.242905\\
2	0.262088\\
3	0.291154\\
4	0.309067\\
5	0.360224\\
};
\addplot[forget plot, color=white!15!black] table[row sep=crcr] {%
0.654545454545455	0\\
5.34545454545454	0\\
};
\addplot[ybar, bar width=0.145, fill=mycolor4, draw=black, area legend] table[row sep=crcr] {%
1	0.233695\\
2	0.268069\\
3	0.282099\\
4	0.294468\\
5	0.370107\\
};
\addplot[forget plot, color=white!15!black] table[row sep=crcr] {%
0.654545454545455	0\\
5.34545454545454	0\\
};
\end{axis}

\end{tikzpicture}%
	\caption{Time per iteration. Time per iteration for the Anisotropy 2D test case $(\varepsilon=100,\theta=\pi/6)$ employing the multigrid scheme based on the VBM aggregation in conjunction with the different polynomial accelerators and the baseline $\ell_1$-Jacobi for comparison.}
	\label{fig:anisotropy_1024_soc1_timeperiteration_reduced}
\end{figure}

\section{Conclusion and Future Work}\label{sec:conclusion}
In conclusion, our results demonstrate that polynomial smoothers based on Chebyshev polynomials constitute an effective alternative to traditional smoothers, offering notable improvements in GPU performance and scalability. Although optimizing the convergence bound using 1\textsuperscript{st}-kind Chebyshev polynomials only achieves slightly superior convergence properties for low-degree polynomial accelerators compared to 4\textsuperscript{th}-kind Chebyshev polynomials, {this improvement is meaningful: low-degree polynomials require fewer SpMV operations and thus incur lower computational overhead, enhancing practical efficiency. Consequently, the proposed 1\textsuperscript{st}-kind Chebyshev smoothers combine several key advantages over existing methods—enhanced convergence at low degrees, independence from matrix eigenvalue estimates, and computational efficiency in parallel implementations. These features make them an appealing choice for performance-critical applications and a promising solution for the exascale era.}
From an implementation perspective, we illustrated how efficient memory management on GPUs allows the application cost of a $k$-degree polynomial accelerator, regardless of the type selection, to match that of $k$ applications of the basic smoother, despite the increased number of \texttt{axpy} operations.

To further extend the applicability of our methods, we plan to investigate solving minimax problems in the complex plane, targeting non-symmetric linear systems that arise from discretizing sectorial differential operators. Additionally, we anticipate that using inter-vector operations in our kernels will enable us to leverage hardware-agnostic technologies, achieving similar performance with lower maintenance costs. These future endeavors will enhance the robustness and versatility of our approach in various high-performance computing scenarios.

\bmhead{Acknowledgements}
FD and SM acknowledge the MUR Excellence Department Project awarded to the Department of Mathematics, University of Pisa, CUP I57G22000700001. PD, FD and SF are members of INdAM-GNCS. We thank the referees for their valuable comments and suggestions, which improved the
clarity and presentation of the manuscript.

\section*{Declarations}

\bmhead{Funding} This work was partially supported by: Spoke 1 ``FutureHPC \& BigData'' and Spoke 6 ``Multiscale Modelling \& Engineering Applications'' of the Italian Research Center on High-Performance Computing, Big Data and Quantum Computing (ICSC) funded by MUR Missione 4 Componente 2 Investimento 1.4: Potenziamento strutture di ricerca e creazione di ``campioni nazionali di R\&S (M4C2-19)'' - Next Generation EU (NGEU); the ``Energy Oriented Center of Excellence (EoCoE III): Fostering the European Energy Transition with Exascale" EuroHPC Project N. 101144014, funded by European Commission (EC); the ``INdAM-GNCS Project: Metodi basati su matrici e tensori strutturati per problemi di algebra lineare di grandi dimensioni'', CUP E53C22001930001; the European Union - NextGenerationEU under the National Recovery and Resilience Plan (PNRR) - Mission 4 Education and research - Component 2 from research to business - Investment 1.1 Notice Prin 2022 - DD N. 104 2/2/2022, titled ``Low-rank Structures and Numerical Methods in Matrix and Tensor Computations and their Application'', proposal code 20227PCCKZ -- CUP I53D23002280006. 

\bmhead{Conflict of interest} The authors declare that they have no conflict of interest.
\bmhead{Data availability} Data sharing not is applicable to this article as no new datasets were generated
during the current study.
\bmhead{Code availability} The code, execution scripts and the pointers to the \texttt{PSCToolkit} packages versions used are available on the GitHub repository   \href{https://github.com/Cirdans-Home/polynomialsmoothers}{Cirdans-Home/polynomialsmoothers}.
\bmhead{Author contribution} All authors contributed equally to the paper.

%
%






\begin{appendices}

\section{Proof of Lemma~\ref{lem:monotonicity}, and Lemma~\ref{lem:interlacing}}
\label{appendix}
Here we report the proofs of the two lemmas concerning the minimax approximation with Chebyshev polynomials of the 1\textsuperscript{st} kind.

\begin{proof}[Proof of Lemma~\ref{lem:monotonicity}]
	Observe that we can write $f(x)= f_1(x) \cdot f_2(x)$ where
	$$
	f_1(x):=\frac{x\tau_k^{[a, 1]}(x)}{1-\tau_k^{[a, 1]}(x)},\qquad f_2(x):=\frac{\tau_k^{[a, 1]}(x)}{1+\tau_k^{[a, 1]}(x)},
	$$
	and $f_1,f_2$ are both positive functions on $(0,a]$. Moreover, $f_2(x)^{-1}= 1+\frac{1}{\tau_k^{[a, 1]}(x)}$ is monotonically increasing on $(0,a]$ and this implies that $f_2$ is monotonically decreasing on $(0,a]$. We will show that also $f_1$ is monotonically decreasing and this will give us the claim as the product of non negative decreasing functions is also decreasing.
	
	Let us indicate with $p(x):= \tau_k(\frac{1+a-2x}{1-a})$ and $c_k:=\tau_k(\frac{1+a}{1-a})$, so that $\tau_k^{[a, 1]}(x)= p(x)/c_k$. Studying the negativity of the derivative of $f_1$ yields
	\begin{align*}
		\frac{df_1}{dx}(x)&=\frac{\tau_k^{[a, 1]}(x)\left(1-\tau_k^{[a, 1]}(x)\right)+x(\tau_k^{[a, 1]})'(x)}{\left(1- \tau_k^{[a, 1]}(x)\right)^2}<0 &\text{ for $x\in(0,a]$}\\ &\Leftrightarrow \tau_k^{[a, 1]}(x)\left(1-\tau_k^{[a, 1]}(x)\right)+x(\tau_k^{[a, 1]})'(x)<0&\text{ for $x\in(0,a]$}\\
		& \Leftrightarrow p(x)\left(1- \frac{p(x)^2}{c_k}\right)+xp'(x)<0&\text{ for $x\in(0,a]$}\\
		&\Leftrightarrow 1-\frac{p(x)}{c_k}+x\frac{p'(x)}{p(x)}<0&\text{ for $x\in(0,a]$},
	\end{align*}
	where we have used that $c_k>0$ and that $p(x)>0$ on $(0, a]$. 
	
	We now show that the function $-\frac{p(x)}{c_k}+x\frac{p'(x)}{p(x)}$ is monotonically decreasing and this implies the claim since $1-\frac{p(x)}{c_k}+x\frac{p'(x)}{p(x)}$ takes the value $0$ at $x=0$. Taking the derivative yields
	\begin{align*}
		\frac{d}{dx}\left(-\frac{p(x)}{c_k}+x\frac{p'(x)}{p(x)}\right)&=\frac{x(p''(x)p(x)-p'(x)^2)}{p(x)^2}<0&\text{ for $x\in(0,a]$}\\
		&\Leftrightarrow p''(x)p(x)-p'(x)^2<0&\text{ for $x\in(0,a]$}\\
		&\Leftrightarrow \frac{p''(x)}{p'(x)}-\frac{p'(x)}{p(x)}>0&\text{ for $x\in(0,a]$}\\
		&\Leftrightarrow \frac{d}{dx}\left(\log\left(-\frac{p'(x)}{p(x)}\right)\right)>0&\text{ for $x\in(0,a]$}\\
		&\Leftrightarrow \log\left(-\frac{p'(x)}{p(x)}\right) \text{is increasing} &\text{ for $x\in(0,a]$},
	\end{align*}
	where we have used that $p(x)>0$ and $p'(x)<0$ on $(0, a]$.
	Since the logarithm is increasing, it is sufficient to show that $-\frac{p'(x)}{p(x)}$ is also increasing. Note that $\frac{p'(x)}{p(x)}= \frac{d}{dx}(\log(p(x)))$ and that
	$$\log(p(x))=\log\left(c_k\prod_{j=1}^k(x_j-x)\right)=\log(c_k)+\sum_{j=1}^k \log(x_j-x),$$ where the quantities $x_j\in(a,1)$ are the roots of $p(x)$, for $j=1,\dots,k$. In particular, $\log(p(x))$ is a concave function (being the sum of concave functions) and this yields
	$$
	0>-\frac{d^2}{dx}\left(\log(p(x)\right)= \frac{d}{dx}\left(-\frac{p'(x)}{p(x)}\right),
	$$
	that implies the claim.
\end{proof}

\begin{proof}[Proof of Lemma~\ref{lem:interlacing}]
	Let us define the polynomials $p(x)= \sum_{j=0}^{k}\binom{2k}{2j}x^{j}$ with roots $\alpha_1,\dots,\alpha_k$, and $q(x)=\sum_{j=0}^{k-1} \binom{2k}{2j+1}x^{j}$ with roots $\delta_1,\dots, \delta_{k-1}$. We begin by showing that the roots of the $p(x)$ and $q(x)$ are all negative real and verify the interlacing property:
	\begin{equation}\label{eq:interlacing}
		0>\alpha_1>\delta_1>\alpha_2>\delta_2>\dots>\alpha_{k-1}>\delta_{k-1}>\alpha_k. 
	\end{equation}
	Note that if $\gamma$ is a root of $p(x^2)$ then $\gamma^2$ is equal to $\alpha_i$ for a certain $i=1,\dots,k$. Moreover, we have
	$$
	p(x^2) = \sum_{j=0}^{k}\binom{2k}{2j}x^{2j}=\frac{1}{2}[(1+x)^{2k}+(1-x)^{2k}],
	$$
	so that $p(x^2)=0$ if and only if $|1+x|=|1-x|$. The latter implies that all the roots are of the form $x=\mathbf i b$ with $b\in\mathbb R$. Then,
	\begin{align*}
		p((ib)^2)&= \mathrm{Re}\left((1+\mathbf i b)^{2k}\right)=0\\
		&\Leftrightarrow \mathrm{Arg}\left(1+\mathbf i b\right)=\frac{(2j+1)\pi}{4k},\quad j=0,\dots,2k-1,\\
		&\Leftrightarrow \arctan(b)= \frac{(2j+1)\pi}{4k},\quad j=0,\dots,2k-1,\\
		&\Rightarrow \text{the roots of $p(x)$ are } \alpha_j=-\tan\left(\frac{(2j+1)\pi}{4k}\right)^2, \quad j=0,\dots,k-1.
	\end{align*}
	Similarly, we have
	$$
	xq(x^2) =\sum_{j=0}^{k-1} \binom{2k}{2j+1}x^{2j+1}=\frac{1}{2}[(1+x)^{2k}-(1-x)^{2k}],
	$$
	and if $\gamma$ is a nonzero root of $xq(x^2)$ then $\gamma^2=\delta_j$ for a certain $j=1,\dots,k-1$. For the same reason of $p(x^2)$, $xq(x^2)$ has only purely immaginary roots of the form $x=\mathbf i b$ and
	\begin{align*}
		(\mathbf i b)\cdot q((ib)^2)&= \mathrm{Im}\left((1+\mathbf i b)^{2k}\right)=0\\
		&\Leftrightarrow \mathrm{Arg}\left(1+\mathbf i b\right)=\frac{j\pi}{2k},\quad j=0,\dots,2k-1,\\
		&\Leftrightarrow \arctan(b)= \frac{j\pi}{2k},\quad j=0,\dots,2k-1,\\
		&\Rightarrow \text{the roots of $q(x)$ are } \delta_j=-\tan\left(\frac{j\pi}{2k}\right)^2, \quad j=1,\dots,k-1.
	\end{align*}
	We remark that the values $\alpha_j$ and $\delta_j$ verify \eqref{eq:interlacing}.
	
	To conclude, we look at the first derivative of $2k \cdot g(x)=\frac{p(x)}{q(x)}$ over $(0, 1)$:
	\begin{align*}
		2k g'(x)&=\frac{p'(x)q(x)-p(x)q'(x)}{q(x)^2}>0 \qquad \text{for $x\in(0,1)$}\\
		&\Leftrightarrow\frac{p'(x)}{p(x)}-\frac{q'(x)}{q(x)}>0 \qquad \text{for $x\in(0,1)$}\\
		&\Leftrightarrow\frac{d}{dx}\left(\log\left(\frac{p(x)}{q(x)}\right)\right)>0\qquad \text{for $x\in(0,1)$}\\
		&\Leftrightarrow \sum_{j=1}^{k-1}\frac{d}{dx}\left(\log\left(\frac{x-\alpha_j}{x-\delta_j}\right)\right) + \frac{d}{dx}(\log(x-\alpha_k))>0\qquad \text{for $x\in(0,1)$}\\
		&\Leftrightarrow \sum_{j=1}^{k-1}\frac{\alpha_j-\delta_j}{(x-\alpha_j)(x-\delta_j)} +\frac{1}{x-\alpha_k}>0\qquad \text{for $x\in(0,1)$}.
	\end{align*}
	The latter relation is implied by \eqref{eq:interlacing}, and this gives the claim.
\end{proof}

%

\section{Additional Details on the Numerical experiments in Section~\ref{sec:results}}\label{sm:additional_details}
The experimental setting is exactly the same, both from the computing point of view and from the point of view of the software requirements. The sections are mirrors of those contained in the paper.

\subsection{Poisson 3D}

The following two sections contain additional information regarding the Poisson 3D problem. They extend the results contained in Section~\ref{sec:poisson3D} adding to the considerations the effect of the smoothers for a higher degree of the polynomial and the use of the compatible matching aggregation.

\subsubsection{Compatible Matching}\label{sec:sm_match_poisson3d}
In Fig.~\ref{fig:3d_1024_match_tsolve_reduced} we reported the time needed to reach the solution of the linear system by keeping the multigrid preconditioner hierarchy fixed and changing the smoother and between the different types of polynomial acceleration and with respect to the degree--number of vector matrix products used per iteration; we extend the results in Fig.~\ref{fig:3d_1024_match_tsolve} by considering polynomial of higher degree while plotting all the figures on the same scale.
\begin{figure}[htbp]
	\centering
%
%
\definecolor{mycolor1}{rgb}{0.00000,0.44700,0.74100}%
\definecolor{mycolor2}{rgb}{0.85000,0.32500,0.09800}%
\definecolor{mycolor3}{rgb}{0.92900,0.69400,0.12500}%
\definecolor{mycolor4}{rgb}{0.49400,0.18400,0.55600}%
\begin{tikzpicture}

\begin{axis}[%
width=0.33\columnwidth,
height=0.168\columnwidth,
at={(0\columnwidth,0.52\columnwidth)},
scale only axis,
bar shift auto,
ybar=2*\pgflinewidth,
xmin=0.4,
xmax=5.6,
xtick={1,2,3,4,5},
xticklabels={{64},{128},{256},{512},{1024}},ticklabel style = {font=\scriptsize},
ymin=0,
ymax=20.3605,
ylabel style={font=\scriptsize},
ylabel={Solve time (s)},
axis background/.style={fill=white},
title style={font=\scriptsize,yshift=-0.8em},
title={$k=4$ degree/$\ell_1$-Jacobi sweeps},
xmajorgrids,
ymajorgrids,
legend columns=4,
legend style={at={(-0.3,1.4)}, anchor=north west, legend cell align=left, align=left, draw=none, fill=none, font=\scriptsize}
]
\addplot[ybar, bar width=0.145, fill=mycolor1, draw=black, area legend] table[row sep=crcr] {%
1	2.99555\\
2	5.10844\\
3	6.40426\\
4	7.95049\\
5	11.5501\\
};
\addplot[forget plot, color=white!15!black] table[row sep=crcr] {%
0.654545454545455	0\\
5.34545454545454	0\\
};
\addlegendentry{Chebyshev 4}

\addplot[ybar, bar width=0.145, fill=mycolor2, draw=black, area legend] table[row sep=crcr] {%
1	2.6942\\
2	5.02838\\
3	6.00266\\
4	7.77814\\
5	11.384\\
};
\addplot[forget plot, color=white!15!black] table[row sep=crcr] {%
0.654545454545455	0\\
5.34545454545454	0\\
};
\addlegendentry{Opt. Chebyshev 4}

\addplot[ybar, bar width=0.145, fill=mycolor3, draw=black, area legend] table[row sep=crcr] {%
1	2.5222\\
2	5.0013\\
3	5.8837\\
4	7.73889\\
5	11.9796\\
};
\addplot[forget plot, color=white!15!black] table[row sep=crcr] {%
0.654545454545455	0\\
5.34545454545454	0\\
};
\addlegendentry{Opt. Chebyshev 1}

\addplot[ybar, bar width=0.145, fill=mycolor4, draw=black, area legend] table[row sep=crcr] {%
1	3.49991\\
2	5.84694\\
3	6.61824\\
4	8.80121\\
5	14.5746\\
};
\addplot[forget plot, color=white!15!black] table[row sep=crcr] {%
0.654545454545455	0\\
5.34545454545454	0\\
};
\addlegendentry{L1-Jacobi}

\end{axis}

\begin{axis}[%
width=0.33\columnwidth,
height=0.168\columnwidth,
at={(0.378\columnwidth,0.52\columnwidth)},
scale only axis,
bar shift auto,
ybar=2*\pgflinewidth,
xmin=0.4,
xmax=5.6,
xtick={1,2,3,4,5},
xticklabels={{64},{128},{256},{512},{1024}},ticklabel style = {font=\scriptsize},
ymin=0,
ymax=20.3605,
ylabel style={font=\scriptsize},
axis background/.style={fill=white},
title style={font=\scriptsize,yshift=-0.8em},
title={$k=6$ degree/$\ell_1$-Jacobi sweeps},
xmajorgrids,
ymajorgrids
]
\addplot[ybar, bar width=0.145, fill=mycolor1, draw=black, area legend] table[row sep=crcr] {%
1	3.31203\\
2	5.57183\\
3	6.92748\\
4	7.86276\\
5	11.0869\\
};
\addplot[forget plot, color=white!15!black] table[row sep=crcr] {%
0.654545454545455	0\\
5.34545454545454	0\\
};
\addplot[ybar, bar width=0.145, fill=mycolor2, draw=black, area legend] table[row sep=crcr] {%
1	3.30466\\
2	5.53617\\
3	6.45388\\
4	7.86843\\
5	10.3415\\
};
\addplot[forget plot, color=white!15!black] table[row sep=crcr] {%
0.654545454545455	0\\
5.34545454545454	0\\
};
\addplot[ybar, bar width=0.145, fill=mycolor3, draw=black, area legend] table[row sep=crcr] {%
1	3.29101\\
2	5.51514\\
3	6.7484\\
4	7.82342\\
5	12.888\\
};
\addplot[forget plot, color=white!15!black] table[row sep=crcr] {%
0.654545454545455	0\\
5.34545454545454	0\\
};
\addplot[ybar, bar width=0.145, fill=mycolor4, draw=black, area legend] table[row sep=crcr] {%
1	4.00202\\
2	6.55137\\
3	8.16297\\
4	9.72351\\
5	15.911\\
};
\addplot[forget plot, color=white!15!black] table[row sep=crcr] {%
0.654545454545455	0\\
5.34545454545454	0\\
};
\end{axis}

\begin{axis}[%
width=0.33\columnwidth,
height=0.168\columnwidth,
at={(0\columnwidth,0.26\columnwidth)},
scale only axis,
bar shift auto,
ybar=2*\pgflinewidth,
xmin=0.4,
xmax=5.6,
xtick={1,2,3,4,5},
xticklabels={{64},{128},{256},{512},{1024}},ticklabel style = {font=\scriptsize},
ymin=0,
ymax=20.3605,
ylabel style={font=\scriptsize},
ylabel={Solve time (s)},
axis background/.style={fill=white},
title style={font=\scriptsize,yshift=-0.8em},
title={$k=8$ degree/$\ell_1$-Jacobi sweeps},
xmajorgrids,
ymajorgrids
]
\addplot[ybar, bar width=0.145, fill=mycolor1, draw=black, area legend] table[row sep=crcr] {%
1	3.55554\\
2	5.77133\\
3	7.12979\\
4	8.37889\\
5	10.9739\\
};
\addplot[forget plot, color=white!15!black] table[row sep=crcr] {%
0.654545454545455	0\\
5.34545454545454	0\\
};
\addplot[ybar, bar width=0.145, fill=mycolor2, draw=black, area legend] table[row sep=crcr] {%
1	3.55148\\
2	5.83995\\
3	6.58093\\
4	8.46996\\
5	10.9109\\
};
\addplot[forget plot, color=white!15!black] table[row sep=crcr] {%
0.654545454545455	0\\
5.34545454545454	0\\
};
\addplot[ybar, bar width=0.145, fill=mycolor3, draw=black, area legend] table[row sep=crcr] {%
1	3.42057\\
2	5.8209\\
3	6.96293\\
4	8.33664\\
5	12.995\\
};
\addplot[forget plot, color=white!15!black] table[row sep=crcr] {%
0.654545454545455	0\\
5.34545454545454	0\\
};
\addplot[ybar, bar width=0.145, fill=mycolor4, draw=black, area legend] table[row sep=crcr] {%
1	4.83018\\
2	7.66922\\
3	9.13501\\
4	10.9936\\
5	17.1865\\
};
\addplot[forget plot, color=white!15!black] table[row sep=crcr] {%
0.654545454545455	0\\
5.34545454545454	0\\
};
\end{axis}

\begin{axis}[%
width=0.33\columnwidth,
height=0.168\columnwidth,
at={(0.378\columnwidth,0.26\columnwidth)},
scale only axis,
bar shift auto,
ybar=2*\pgflinewidth,
xmin=0.4,
xmax=5.6,
xtick={1,2,3,4,5},
xticklabels={{64},{128},{256},{512},{1024}},ticklabel style = {font=\scriptsize},
xlabel style={font=\scriptsize},
xlabel={Number of GPUs},
ymin=0,
ymax=20.3605,
ylabel style={font=\scriptsize},
axis background/.style={fill=white},
title style={font=\scriptsize,yshift=-0.8em},
title={$k=10$ degree/$\ell_1$-Jacobi sweeps},
xmajorgrids,
ymajorgrids
]
\addplot[ybar, bar width=0.145, fill=mycolor1, draw=black, area legend] table[row sep=crcr] {%
1	4.13251\\
2	6.68876\\
3	6.99996\\
4	8.51958\\
5	11.3681\\
};
\addplot[forget plot, color=white!15!black] table[row sep=crcr] {%
0.654545454545455	0\\
5.34545454545454	0\\
};
\addplot[ybar, bar width=0.145, fill=mycolor2, draw=black, area legend] table[row sep=crcr] {%
1	3.61456\\
2	5.98068\\
3	6.98126\\
4	8.50166\\
5	10.3614\\
};
\addplot[forget plot, color=white!15!black] table[row sep=crcr] {%
0.654545454545455	0\\
5.34545454545454	0\\
};
\addplot[ybar, bar width=0.145, fill=mycolor3, draw=black, area legend] table[row sep=crcr] {%
1	4.07997\\
2	6.66412\\
3	7.42596\\
4	8.44117\\
5	12.4769\\
};
\addplot[forget plot, color=white!15!black] table[row sep=crcr] {%
0.654545454545455	0\\
5.34545454545454	0\\
};
\addplot[ybar, bar width=0.145, fill=mycolor4, draw=black, area legend] table[row sep=crcr] {%
1	5.12613\\
2	8.08032\\
3	10.5244\\
4	11.755\\
5	19.3676\\
};
\addplot[forget plot, color=white!15!black] table[row sep=crcr] {%
0.654545454545455	0\\
5.34545454545454	0\\
};
\end{axis}

\begin{axis}[%
width=0.33\columnwidth,
height=0.168\columnwidth,
at={(0\columnwidth,0\columnwidth)},
scale only axis,
bar shift auto,
ybar=2*\pgflinewidth,
xmin=0.4,
xmax=5.6,
xtick={1,2,3,4,5},
xticklabels={{64},{128},{256},{512},{1024}},ticklabel style = {font=\scriptsize},
xlabel style={font=\scriptsize},
xlabel={Number of GPUs},
ymin=0,
ymax=20.3605,
ylabel style={font=\scriptsize},
ylabel={Solve time (s)},
axis background/.style={fill=white},
title style={font=\scriptsize,yshift=-0.8em},
title={$k=12$ degree/$\ell_1$-Jacobi sweeps},
xmajorgrids,
ymajorgrids
]
\addplot[ybar, bar width=0.145, fill=mycolor1, draw=black, area legend] table[row sep=crcr] {%
1	4.13887\\
2	6.71727\\
3	7.23561\\
4	9.50135\\
5	11.4897\\
};
\addplot[forget plot, color=white!15!black] table[row sep=crcr] {%
0.654545454545455	0\\
5.34545454545454	0\\
};
\addplot[ybar, bar width=0.145, fill=mycolor2, draw=black, area legend] table[row sep=crcr] {%
1	4.1789\\
2	6.73042\\
3	6.48069\\
4	9.47244\\
5	11.7892\\
};
\addplot[forget plot, color=white!15!black] table[row sep=crcr] {%
0.654545454545455	0\\
5.34545454545454	0\\
};
\addplot[ybar, bar width=0.145, fill=mycolor3, draw=black, area legend] table[row sep=crcr] {%
1	3.90646\\
2	6.65484\\
3	7.76683\\
4	9.4001\\
5	12.7408\\
};
\addplot[forget plot, color=white!15!black] table[row sep=crcr] {%
0.654545454545455	0\\
5.34545454545454	0\\
};
\addplot[ybar, bar width=0.145, fill=mycolor4, draw=black, area legend] table[row sep=crcr] {%
1	5.53168\\
2	9.092\\
3	11.2562\\
4	13.2292\\
5	20.3605\\
};
\addplot[forget plot, color=white!15!black] table[row sep=crcr] {%
0.654545454545455	0\\
5.34545454545454	0\\
};
\end{axis}
\end{tikzpicture}%
	
	\caption{Solve time (s). Time to solution (s) for the Poisson 3D test case employing the multigrid scheme based on the compatible matching aggregation in conjunction with the different polynomial accelerators and the baseline $\ell_1$-Jacobi for comparison.}
	\label{fig:3d_1024_match_tsolve}
\end{figure}
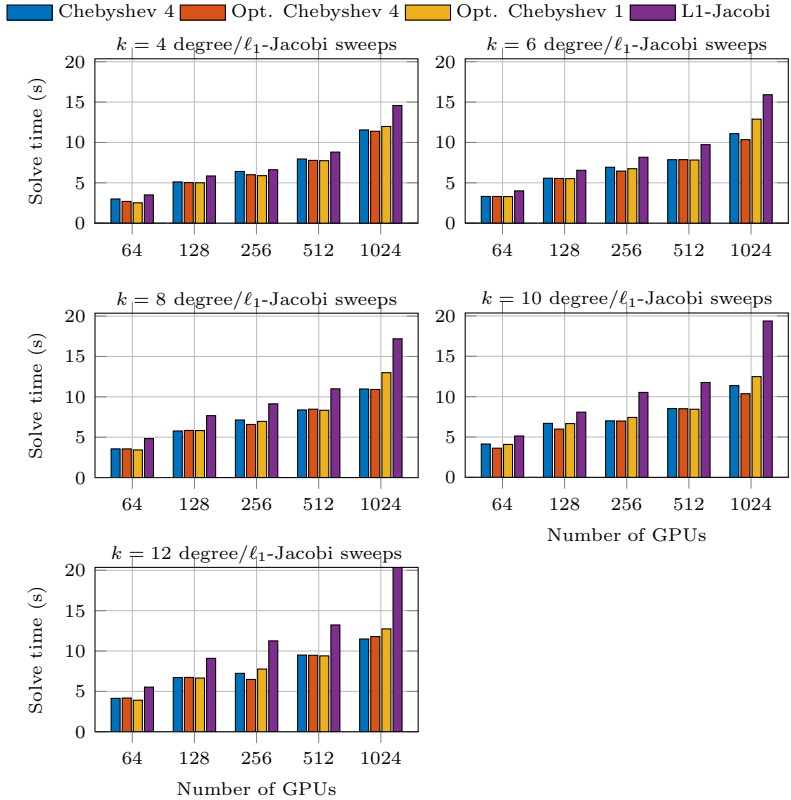
The behavior of the solution times is analogous to that observed for lower degrees in the paper. There is an increase in the total solution time with the increase in the degree of the polynomial used as smoother, however, the increase ratio observed for the polynomial smoothers is smaller than the increase ratio for the baseline $\ell_1$-Jacobi, especially for increasing problem size and number of GPUs. This confirms the better scalability potential of the polynomial accelerations, in the face of the expected reduction in the number of iterations as $k$ increases shown in Fig.~\ref{fig:3d_1024_match_itercount}.
\begin{figure}[htbp]
	\centering
%
%
\definecolor{mycolor1}{rgb}{0.00000,0.44700,0.74100}%
\definecolor{mycolor2}{rgb}{0.85000,0.32500,0.09800}%
\definecolor{mycolor3}{rgb}{0.92900,0.69400,0.12500}%
\definecolor{mycolor4}{rgb}{0.49400,0.18400,0.55600}%
\begin{tikzpicture}

\begin{axis}[%
width=0.33\columnwidth,
height=0.168\columnwidth,
at={(0\columnwidth,0.519\columnwidth)},
scale only axis,
bar shift auto,
ybar=2*\pgflinewidth,
xmin=0.4,
xmax=5.6,
xtick={1,2,3,4,5},
xticklabels={{64},{128},{256},{512},{1024}},ticklabel style = {font=\scriptsize},
ymin=0,
ymax=21,
ylabel style={font=\scriptsize},
ylabel={Iterations},
axis background/.style={fill=white},
title style={font=\scriptsize,yshift=-0.8em},
title={$k=4$ degree/$\ell_1$-Jacobi sweeps},
xmajorgrids,
ymajorgrids,
legend columns=4,
legend style={at={(-0.3,1.4)}, anchor=north west, legend cell align=left, align=left, draw=none, fill=none, font=\scriptsize}
]
\addplot[ybar, bar width=0.145, fill=mycolor1, draw=black, area legend] table[row sep=crcr] {%
1	10\\
2	11\\
3	17\\
4	14\\
5	18\\
};
\addplot[forget plot, color=white!15!black] table[row sep=crcr] {%
0.654545454545455	0\\
5.34545454545454	0\\
};
\addlegendentry{Chebyshev 4}

\addplot[ybar, bar width=0.145, fill=mycolor2, draw=black, area legend] table[row sep=crcr] {%
1	9\\
2	11\\
3	16\\
4	14\\
5	18\\
};
\addplot[forget plot, color=white!15!black] table[row sep=crcr] {%
0.654545454545455	0\\
5.34545454545454	0\\
};
\addlegendentry{Opt. Chebyshev 4}

\addplot[ybar, bar width=0.145, fill=mycolor3, draw=black, area legend] table[row sep=crcr] {%
1	9\\
2	11\\
3	16\\
4	14\\
5	18\\
};
\addplot[forget plot, color=white!15!black] table[row sep=crcr] {%
0.654545454545455	0\\
5.34545454545454	0\\
};
\addlegendentry{Opt. Chebyshev 1}

\addplot[ybar, bar width=0.145, fill=mycolor4, draw=black, area legend] table[row sep=crcr] {%
1	12\\
2	13\\
3	18\\
4	16\\
5	21\\
};
\addplot[forget plot, color=white!15!black] table[row sep=crcr] {%
0.654545454545455	0\\
5.34545454545454	0\\
};
\addlegendentry{L1-Jacobi}

\end{axis}

\begin{axis}[%
width=0.33\columnwidth,
height=0.168\columnwidth,
at={(0.378\columnwidth,0.519\columnwidth)},
scale only axis,
bar shift auto,
ybar=2*\pgflinewidth,
xmin=0.4,
xmax=5.6,
xtick={1,2,3,4,5},
xticklabels={{64},{128},{256},{512},{1024}},ticklabel style = {font=\scriptsize},
ymin=0,
ymax=21,
ylabel style={font=\scriptsize},
axis background/.style={fill=white},
title style={font=\scriptsize,yshift=-0.8em},
title={$k=6$ degree/$\ell_1$-Jacobi sweeps},
xmajorgrids,
ymajorgrids
]
\addplot[ybar, bar width=0.145, fill=mycolor1, draw=black, area legend] table[row sep=crcr] {%
1	9\\
2	10\\
3	15\\
4	12\\
5	15\\
};
\addplot[forget plot, color=white!15!black] table[row sep=crcr] {%
0.654545454545455	0\\
5.34545454545454	0\\
};
\addplot[ybar, bar width=0.145, fill=mycolor2, draw=black, area legend] table[row sep=crcr] {%
1	9\\
2	10\\
3	14\\
4	12\\
5	14\\
};
\addplot[forget plot, color=white!15!black] table[row sep=crcr] {%
0.654545454545455	0\\
5.34545454545454	0\\
};
\addplot[ybar, bar width=0.145, fill=mycolor3, draw=black, area legend] table[row sep=crcr] {%
1	9\\
2	10\\
3	15\\
4	12\\
5	16\\
};
\addplot[forget plot, color=white!15!black] table[row sep=crcr] {%
0.654545454545455	0\\
5.34545454545454	0\\
};
\addplot[ybar, bar width=0.145, fill=mycolor4, draw=black, area legend] table[row sep=crcr] {%
1	11\\
2	12\\
3	18\\
4	15\\
5	20\\
};
\addplot[forget plot, color=white!15!black] table[row sep=crcr] {%
0.654545454545455	0\\
5.34545454545454	0\\
};
\end{axis}

\begin{axis}[%
width=0.33\columnwidth,
height=0.168\columnwidth,
at={(0\columnwidth,0.26\columnwidth)},
scale only axis,
bar shift auto,
ybar=2*\pgflinewidth,
xmin=0.4,
xmax=5.6,
xtick={1,2,3,4,5},
xticklabels={{64},{128},{256},{512},{1024}},ticklabel style = {font=\scriptsize},
ymin=0,
ymax=21,
ylabel style={font=\scriptsize},
ylabel={Iterations},
axis background/.style={fill=white},
title style={font=\scriptsize,yshift=-0.8em},
title={$k=8$ degree/$\ell_1$-Jacobi sweeps},
xmajorgrids,
ymajorgrids
]
\addplot[ybar, bar width=0.145, fill=mycolor1, draw=black, area legend] table[row sep=crcr] {%
1	8\\
2	9\\
3	13\\
4	11\\
5	13\\
};
\addplot[forget plot, color=white!15!black] table[row sep=crcr] {%
0.654545454545455	0\\
5.34545454545454	0\\
};
\addplot[ybar, bar width=0.145, fill=mycolor2, draw=black, area legend] table[row sep=crcr] {%
1	8\\
2	9\\
3	12\\
4	11\\
5	13\\
};
\addplot[forget plot, color=white!15!black] table[row sep=crcr] {%
0.654545454545455	0\\
5.34545454545454	0\\
};
\addplot[ybar, bar width=0.145, fill=mycolor3, draw=black, area legend] table[row sep=crcr] {%
1	8\\
2	9\\
3	13\\
4	11\\
5	14\\
};
\addplot[forget plot, color=white!15!black] table[row sep=crcr] {%
0.654545454545455	0\\
5.34545454545454	0\\
};
\addplot[ybar, bar width=0.145, fill=mycolor4, draw=black, area legend] table[row sep=crcr] {%
1	11\\
2	12\\
3	17\\
4	15\\
5	19\\
};
\addplot[forget plot, color=white!15!black] table[row sep=crcr] {%
0.654545454545455	0\\
5.34545454545454	0\\
};
\end{axis}

\begin{axis}[%
width=0.33\columnwidth,
height=0.168\columnwidth,
at={(0.378\columnwidth,0.26\columnwidth)},
scale only axis,
bar shift auto,
ybar=2*\pgflinewidth,
xmin=0.4,
xmax=5.6,
xtick={1,2,3,4,5},
xticklabels={{64},{128},{256},{512},{1024}},ticklabel style = {font=\scriptsize},
xlabel style={font=\scriptsize},
xlabel={Number of GPUs},
ymin=0,
ymax=21,
ylabel style={font=\scriptsize},
axis background/.style={fill=white},
title style={font=\scriptsize,yshift=-0.8em},
title={$k=10$ degree/$\ell_1$-Jacobi sweeps},
xmajorgrids,
ymajorgrids
]
\addplot[ybar, bar width=0.145, fill=mycolor1, draw=black, area legend] table[row sep=crcr] {%
1	8\\
2	9\\
3	11\\
4	10\\
5	12\\
};
\addplot[forget plot, color=white!15!black] table[row sep=crcr] {%
0.654545454545455	0\\
5.34545454545454	0\\
};
\addplot[ybar, bar width=0.145, fill=mycolor2, draw=black, area legend] table[row sep=crcr] {%
1	7\\
2	8\\
3	11\\
4	10\\
5	11\\
};
\addplot[forget plot, color=white!15!black] table[row sep=crcr] {%
0.654545454545455	0\\
5.34545454545454	0\\
};
\addplot[ybar, bar width=0.145, fill=mycolor3, draw=black, area legend] table[row sep=crcr] {%
1	8\\
2	9\\
3	12\\
4	10\\
5	12\\
};
\addplot[forget plot, color=white!15!black] table[row sep=crcr] {%
0.654545454545455	0\\
5.34545454545454	0\\
};
\addplot[ybar, bar width=0.145, fill=mycolor4, draw=black, area legend] table[row sep=crcr] {%
1	10\\
2	11\\
3	17\\
4	14\\
5	19\\
};
\addplot[forget plot, color=white!15!black] table[row sep=crcr] {%
0.654545454545455	0\\
5.34545454545454	0\\
};
\end{axis}

\begin{axis}[%
width=0.33\columnwidth,
height=0.168\columnwidth,
at={(0\columnwidth,0\columnwidth)},
scale only axis,
bar shift auto,
ybar=2*\pgflinewidth,
xmin=0.4,
xmax=5.6,
xtick={1,2,3,4,5},
xticklabels={{64},{128},{256},{512},{1024}},ticklabel style = {font=\scriptsize},
xlabel style={font=\scriptsize},
xlabel={Number of GPUs},
ymin=0,
ymax=21,
ylabel style={font=\scriptsize},
ylabel={Iterations},
axis background/.style={fill=white},
title style={font=\scriptsize,yshift=-0.8em},
title={$k=12$ degree/$\ell_1$-Jacobi sweeps},
xmajorgrids,
ymajorgrids
]
\addplot[ybar, bar width=0.145, fill=mycolor1, draw=black, area legend] table[row sep=crcr] {%
1	7\\
2	8\\
3	10\\
4	10\\
5	11\\
};
\addplot[forget plot, color=white!15!black] table[row sep=crcr] {%
0.654545454545455	0\\
5.34545454545454	0\\
};
\addplot[ybar, bar width=0.145, fill=mycolor2, draw=black, area legend] table[row sep=crcr] {%
1	7\\
2	8\\
3	9\\
4	10\\
5	11\\
};
\addplot[forget plot, color=white!15!black] table[row sep=crcr] {%
0.654545454545455	0\\
5.34545454545454	0\\
};
\addplot[ybar, bar width=0.145, fill=mycolor3, draw=black, area legend] table[row sep=crcr] {%
1	7\\
2	8\\
3	11\\
4	10\\
5	11\\
};
\addplot[forget plot, color=white!15!black] table[row sep=crcr] {%
0.654545454545455	0\\
5.34545454545454	0\\
};
\addplot[ybar, bar width=0.145, fill=mycolor4, draw=black, area legend] table[row sep=crcr] {%
1	10\\
2	11\\
3	16\\
4	14\\
5	18\\
};
\addplot[forget plot, color=white!15!black] table[row sep=crcr] {%
0.654545454545455	0\\
5.34545454545454	0\\
};
\end{axis}
\end{tikzpicture}%
	
	\caption{Iteration count. Total number of FCG iterations for the Poisson 3D test case employing the multigrid scheme based on the compatible matching aggregation in conjunction with the different polynomial accelerators and the baseline $\ell_1$-Jacobi for comparison.}
	\label{fig:3d_1024_match_itercount}
\end{figure}
However, the improvement in the number of iterations is not sufficient to amortize the cost of the additional sparse matrix-vector products needed to apply the higher degree polynomial. Hence the error reduction effect due to the smoother is again more important with respect to a low degree polynomial than with respect to a higher degree polynomial. To conclude this set of experiments we can look again at the implementation scalability properties of the method by focusing on the iteration time of the different polynomial smoothers in Fig.~\ref{fig:3d_1024_match_timeperiteration}.
\begin{figure}[htbp]
	\centering
%
%
\definecolor{mycolor1}{rgb}{0.00000,0.44700,0.74100}%
\definecolor{mycolor2}{rgb}{0.85000,0.32500,0.09800}%
\definecolor{mycolor3}{rgb}{0.92900,0.69400,0.12500}%
\definecolor{mycolor4}{rgb}{0.49400,0.18400,0.55600}%
\begin{tikzpicture}

\begin{axis}[%
width=0.33\columnwidth,
height=0.168\columnwidth,
at={(0\columnwidth,0.519\columnwidth)},
scale only axis,
bar shift auto,
ybar=2*\pgflinewidth,
xmin=0.3,
xmax=5.6,
xtick={1,2,3,4,5},
xticklabels={{64},{128},{256},{512},{1024}},ticklabel style = {font=\scriptsize},
ymin=0,
ymax=1.15826,
ylabel style={font=\scriptsize},
ylabel={Time per iteration (s)},
axis background/.style={fill=white},
title style={font=\scriptsize,yshift=-0.8em},
title={$k=4$ degree/$\ell_1$-Jacobi sweeps},
xmajorgrids,
ymajorgrids,
legend columns=4,
legend style={at={(-0.3,1.4)}, anchor=north west, legend cell align=left, align=left, draw=none, fill=none, font=\scriptsize}
]
\addplot[ybar, bar width=0.145, fill=mycolor1, draw=black, area legend] table[row sep=crcr] {%
1	0.299555\\
2	0.464404\\
3	0.376721\\
4	0.567892\\
5	0.64167\\
};
\addplot[forget plot, color=white!15!black] table[row sep=crcr] {%
0.654545454545455	0\\
5.34545454545454	0\\
};
\addlegendentry{Chebyshev 4}

\addplot[ybar, bar width=0.145, fill=mycolor2, draw=black, area legend] table[row sep=crcr] {%
1	0.299355\\
2	0.457125\\
3	0.375166\\
4	0.555581\\
5	0.632442\\
};
\addplot[forget plot, color=white!15!black] table[row sep=crcr] {%
0.654545454545455	0\\
5.34545454545454	0\\
};
\addlegendentry{Opt. Chebyshev 4}

\addplot[ybar, bar width=0.145, fill=mycolor3, draw=black, area legend] table[row sep=crcr] {%
1	0.280245\\
2	0.454664\\
3	0.367731\\
4	0.552778\\
5	0.665533\\
};
\addplot[forget plot, color=white!15!black] table[row sep=crcr] {%
0.654545454545455	0\\
5.34545454545454	0\\
};
\addlegendentry{Opt. Chebyshev 1}

\addplot[ybar, bar width=0.145, fill=mycolor4, draw=black, area legend] table[row sep=crcr] {%
1	0.291659\\
2	0.449765\\
3	0.36768\\
4	0.550076\\
5	0.694026\\
};
\addplot[forget plot, color=white!15!black] table[row sep=crcr] {%
0.654545454545455	0\\
5.34545454545454	0\\
};
\addlegendentry{L1-Jacobi}

\end{axis}

\begin{axis}[%
width=0.33\columnwidth,
height=0.168\columnwidth,
at={(0.378\columnwidth,0.519\columnwidth)},
scale only axis,
bar shift auto,
ybar=2*\pgflinewidth,
xmin=0.3,
xmax=5.6,
xtick={1,2,3,4,5},
xticklabels={{64},{128},{256},{512},{1024}},ticklabel style = {font=\scriptsize},
ymin=0,
ymax=1.15826,
axis background/.style={fill=white},
title style={font=\scriptsize,yshift=-0.8em},
title={$k=6$ degree/$\ell_1$-Jacobi sweeps},
xmajorgrids,
ymajorgrids
]
\addplot[ybar, bar width=0.145, fill=mycolor1, draw=black, area legend] table[row sep=crcr] {%
1	0.368003\\
2	0.557183\\
3	0.461832\\
4	0.65523\\
5	0.739126\\
};
\addplot[forget plot, color=white!15!black] table[row sep=crcr] {%
0.654545454545455	0\\
5.34545454545454	0\\
};
\addplot[ybar, bar width=0.145, fill=mycolor2, draw=black, area legend] table[row sep=crcr] {%
1	0.367185\\
2	0.553617\\
3	0.460992\\
4	0.655702\\
5	0.738682\\
};
\addplot[forget plot, color=white!15!black] table[row sep=crcr] {%
0.654545454545455	0\\
5.34545454545454	0\\
};
\addplot[ybar, bar width=0.145, fill=mycolor3, draw=black, area legend] table[row sep=crcr] {%
1	0.365668\\
2	0.551514\\
3	0.449893\\
4	0.651952\\
5	0.805499\\
};
\addplot[forget plot, color=white!15!black] table[row sep=crcr] {%
0.654545454545455	0\\
5.34545454545454	0\\
};
\addplot[ybar, bar width=0.145, fill=mycolor4, draw=black, area legend] table[row sep=crcr] {%
1	0.36382\\
2	0.545948\\
3	0.453498\\
4	0.648234\\
5	0.79555\\
};
\addplot[forget plot, color=white!15!black] table[row sep=crcr] {%
0.654545454545455	0\\
5.34545454545454	0\\
};
\end{axis}

\begin{axis}[%
width=0.33\columnwidth,
height=0.168\columnwidth,
at={(0\columnwidth,0.259\columnwidth)},
scale only axis,
bar shift auto,
ybar=2*\pgflinewidth,
xmin=0.3,
xmax=5.6,
xtick={1,2,3,4,5},
xticklabels={{64},{128},{256},{512},{1024}},ticklabel style = {font=\scriptsize},
ymin=0,
ymax=1.15826,
ylabel style={font=\scriptsize},
ylabel={Time per iteration (s)},
axis background/.style={fill=white},
title style={font=\scriptsize,yshift=-0.8em},
title={$k=8$ degree/$\ell_1$-Jacobi sweeps},
xmajorgrids,
ymajorgrids
]
\addplot[ybar, bar width=0.145, fill=mycolor1, draw=black, area legend] table[row sep=crcr] {%
1	0.444442\\
2	0.641259\\
3	0.548445\\
4	0.761717\\
5	0.844145\\
};
\addplot[forget plot, color=white!15!black] table[row sep=crcr] {%
0.654545454545455	0\\
5.34545454545454	0\\
};
\addplot[ybar, bar width=0.145, fill=mycolor2, draw=black, area legend] table[row sep=crcr] {%
1	0.443935\\
2	0.648884\\
3	0.548411\\
4	0.769996\\
5	0.839302\\
};
\addplot[forget plot, color=white!15!black] table[row sep=crcr] {%
0.654545454545455	0\\
5.34545454545454	0\\
};
\addplot[ybar, bar width=0.145, fill=mycolor3, draw=black, area legend] table[row sep=crcr] {%
1	0.427571\\
2	0.646767\\
3	0.53561\\
4	0.757876\\
5	0.928213\\
};
\addplot[forget plot, color=white!15!black] table[row sep=crcr] {%
0.654545454545455	0\\
5.34545454545454	0\\
};
\addplot[ybar, bar width=0.145, fill=mycolor4, draw=black, area legend] table[row sep=crcr] {%
1	0.439107\\
2	0.639102\\
3	0.537353\\
4	0.732909\\
5	0.904553\\
};
\addplot[forget plot, color=white!15!black] table[row sep=crcr] {%
0.654545454545455	0\\
5.34545454545454	0\\
};
\end{axis}

\begin{axis}[%
width=0.33\columnwidth,
height=0.168\columnwidth,
at={(0.378\columnwidth,0.259\columnwidth)},
scale only axis,
bar shift auto,
ybar=2*\pgflinewidth,
xmin=0.3,
xmax=5.6,
xtick={1,2,3,4,5},
xticklabels={{64},{128},{256},{512},{1024}},ticklabel style = {font=\scriptsize},
xlabel style={font=\scriptsize},
xlabel={Number of GPUs},
ymin=0,
ymax=1.15826,
axis background/.style={fill=white},
title style={font=\scriptsize,yshift=-0.8em},
title={$k=10$ degree/$\ell_1$-Jacobi sweeps},
xmajorgrids,
ymajorgrids
]
\addplot[ybar, bar width=0.145, fill=mycolor1, draw=black, area legend] table[row sep=crcr] {%
1	0.516564\\
2	0.743196\\
3	0.63636\\
4	0.851958\\
5	0.947339\\
};
\addplot[forget plot, color=white!15!black] table[row sep=crcr] {%
0.654545454545455	0\\
5.34545454545454	0\\
};
\addplot[ybar, bar width=0.145, fill=mycolor2, draw=black, area legend] table[row sep=crcr] {%
1	0.516366\\
2	0.747585\\
3	0.63466\\
4	0.850166\\
5	0.941942\\
};
\addplot[forget plot, color=white!15!black] table[row sep=crcr] {%
0.654545454545455	0\\
5.34545454545454	0\\
};
\addplot[ybar, bar width=0.145, fill=mycolor3, draw=black, area legend] table[row sep=crcr] {%
1	0.509996\\
2	0.740458\\
3	0.61883\\
4	0.844117\\
5	1.03974\\
};
\addplot[forget plot, color=white!15!black] table[row sep=crcr] {%
0.654545454545455	0\\
5.34545454545454	0\\
};
\addplot[ybar, bar width=0.145, fill=mycolor4, draw=black, area legend] table[row sep=crcr] {%
1	0.512613\\
2	0.734575\\
3	0.61908\\
4	0.839641\\
5	1.01935\\
};
\addplot[forget plot, color=white!15!black] table[row sep=crcr] {%
0.654545454545455	0\\
5.34545454545454	0\\
};
\end{axis}

\begin{axis}[%
width=0.33\columnwidth,
height=0.168\columnwidth,
at={(0\columnwidth,0\columnwidth)},
scale only axis,
bar shift auto,
ybar=2*\pgflinewidth,
xmin=0.3,
xmax=5.6,
xtick={1,2,3,4,5},
xticklabels={{64},{128},{256},{512},{1024}},ticklabel style = {font=\scriptsize},
xlabel style={font=\scriptsize},
xlabel={Number of GPUs},
ymin=0,
ymax=1.15826,
ylabel style={font=\scriptsize},
ylabel={Time per iteration (s)},
axis background/.style={fill=white},
title style={font=\scriptsize,yshift=-0.8em},
title={$k=12$ degree/$\ell_1$-Jacobi sweeps},
xmajorgrids,
ymajorgrids
]
\addplot[ybar, bar width=0.145, fill=mycolor1, draw=black, area legend] table[row sep=crcr] {%
1	0.591267\\
2	0.839659\\
3	0.723561\\
4	0.950135\\
5	1.04452\\
};
\addplot[forget plot, color=white!15!black] table[row sep=crcr] {%
0.654545454545455	0\\
5.34545454545454	0\\
};
\addplot[ybar, bar width=0.145, fill=mycolor2, draw=black, area legend] table[row sep=crcr] {%
1	0.596985\\
2	0.841303\\
3	0.720076\\
4	0.947244\\
5	1.07175\\
};
\addplot[forget plot, color=white!15!black] table[row sep=crcr] {%
0.654545454545455	0\\
5.34545454545454	0\\
};
\addplot[ybar, bar width=0.145, fill=mycolor3, draw=black, area legend] table[row sep=crcr] {%
1	0.558065\\
2	0.831856\\
3	0.706076\\
4	0.94001\\
5	1.15826\\
};
\addplot[forget plot, color=white!15!black] table[row sep=crcr] {%
0.654545454545455	0\\
5.34545454545454	0\\
};
\addplot[ybar, bar width=0.145, fill=mycolor4, draw=black, area legend] table[row sep=crcr] {%
1	0.553168\\
2	0.826546\\
3	0.703515\\
4	0.94494\\
5	1.13114\\
};
\addplot[forget plot, color=white!15!black] table[row sep=crcr] {%
0.654545454545455	0\\
5.34545454545454	0\\
};
\end{axis}

\end{tikzpicture}%
	
	\caption{Time per iteration. Time per iteration for the Poisson 3D test case employing the multigrid scheme based on the compatible matching aggregation in conjunction with the different polynomial accelerators and the baseline $\ell_1$-Jacobi for comparison.}
	\label{fig:3d_1024_match_timeperiteration}
\end{figure}
Again the iteration time of the different methods is completely comparable with that of the reference method, the proportional growth with respect to the degree $k$ of the polynomial is consistent with the increase in the number of sparse matrix-vector products.

\subsubsection{VBM}\label{sec:sm_vbm_poisson3d}

This set of experiments extends those in Section~\ref{sec:poisson3D} of the paper. In Fig.~\ref{fig:3d_1024_vbm_tsolve} we report the solution times for the different smoothers using in each case the VBM aggregation procedure.
\begin{figure}[htbp]
	\centering
%
%
\definecolor{mycolor1}{rgb}{0.00000,0.44700,0.74100}%
\definecolor{mycolor2}{rgb}{0.85000,0.32500,0.09800}%
\definecolor{mycolor3}{rgb}{0.92900,0.69400,0.12500}%
\definecolor{mycolor4}{rgb}{0.49400,0.18400,0.55600}%
\begin{tikzpicture}

\begin{axis}[%
width=0.33\columnwidth,
height=0.168\columnwidth,
at={(0\columnwidth,0.538\columnwidth)},
scale only axis,
bar shift auto,
ybar=2*\pgflinewidth,
xmin=0.4,
xmax=5.6,
xtick={1,2,3,4,5},
xticklabels={{64},{128},{256},{512},{1024}},ticklabel style = {font=\scriptsize},
ymin=0,
ymax=13.6662,
ylabel style={font=\scriptsize},
ylabel={Solve time (s)},
axis background/.style={fill=white},
title style={font=\scriptsize,yshift=-0.8em},
title={$k=4$ degree/$\ell_1$-Jacobi sweeps},
xmajorgrids,
ymajorgrids,
legend columns=4,
legend style={at={(-0.3,1.4)}, anchor=north west, legend cell align=left, align=left, draw=none, fill=none, font=\scriptsize}
]
\addplot[ybar, bar width=0.145, fill=mycolor1, draw=black, area legend] table[row sep=crcr] {%
1	3.13754\\
2	4.03314\\
3	4.6204\\
4	5.19123\\
5	8.75351\\
};
\addplot[forget plot, color=white!15!black] table[row sep=crcr] {%
0.654545454545455	0\\
5.34545454545454	0\\
};
\addlegendentry{Chebyshev 4}

\addplot[ybar, bar width=0.145, fill=mycolor2, draw=black, area legend] table[row sep=crcr] {%
1	3.12623\\
2	4.17383\\
3	4.69222\\
4	5.42257\\
5	8.09073\\
};
\addplot[forget plot, color=white!15!black] table[row sep=crcr] {%
0.654545454545455	0\\
5.34545454545454	0\\
};
\addlegendentry{Opt. Chebyshev 4}

\addplot[ybar, bar width=0.145, fill=mycolor3, draw=black, area legend] table[row sep=crcr] {%
1	3.15258\\
2	4.10474\\
3	4.50265\\
4	5.23116\\
5	7.48149\\
};
\addplot[forget plot, color=white!15!black] table[row sep=crcr] {%
0.654545454545455	0\\
5.34545454545454	0\\
};
\addlegendentry{Opt. Chebyshev 1}

\addplot[ybar, bar width=0.145, fill=mycolor4, draw=black, area legend] table[row sep=crcr] {%
1	3.26063\\
2	4.38198\\
3	4.98551\\
4	5.20312\\
5	8.32194\\
};
\addplot[forget plot, color=white!15!black] table[row sep=crcr] {%
0.654545454545455	0\\
5.34545454545454	0\\
};
\addlegendentry{L1-Jacobi}

\end{axis}

\begin{axis}[%
width=0.33\columnwidth,
height=0.168\columnwidth,
at={(0.378\columnwidth,0.538\columnwidth)},
scale only axis,
bar shift auto,
ybar=2*\pgflinewidth,
xmin=0.4,
xmax=5.6,
xtick={1,2,3,4,5},
xticklabels={{64},{128},{256},{512},{1024}},ticklabel style = {font=\scriptsize},
ymin=0,
ymax=13.6662,
axis background/.style={fill=white},
title style={font=\scriptsize,yshift=-0.8em},
title={$k=6$ degree/$\ell_1$-Jacobi sweeps},
xmajorgrids,
ymajorgrids
]
\addplot[ybar, bar width=0.145, fill=mycolor1, draw=black, area legend] table[row sep=crcr] {%
1	3.69253\\
2	4.72677\\
3	5.69108\\
4	5.94\\
5	9.85938\\
};
\addplot[forget plot, color=white!15!black] table[row sep=crcr] {%
0.654545454545455	0\\
5.34545454545454	0\\
};
\addplot[ybar, bar width=0.145, fill=mycolor2, draw=black, area legend] table[row sep=crcr] {%
1	3.68954\\
2	4.82642\\
3	5.54549\\
4	6.55912\\
5	8.67707\\
};
\addplot[forget plot, color=white!15!black] table[row sep=crcr] {%
0.654545454545455	0\\
5.34545454545454	0\\
};
\addplot[ybar, bar width=0.145, fill=mycolor3, draw=black, area legend] table[row sep=crcr] {%
1	3.73362\\
2	4.67499\\
3	5.3865\\
4	5.92927\\
5	8.38325\\
};
\addplot[forget plot, color=white!15!black] table[row sep=crcr] {%
0.654545454545455	0\\
5.34545454545454	0\\
};
\addplot[ybar, bar width=0.145, fill=mycolor4, draw=black, area legend] table[row sep=crcr] {%
1	4.10668\\
2	5.38828\\
3	5.98553\\
4	6.21606\\
5	10.33\\
};
\addplot[forget plot, color=white!15!black] table[row sep=crcr] {%
0.654545454545455	0\\
5.34545454545454	0\\
};
\end{axis}

\begin{axis}[%
width=0.33\columnwidth,
height=0.168\columnwidth,
at={(0\columnwidth,0.269\columnwidth)},
scale only axis,
bar shift auto,
ybar=2*\pgflinewidth,
xmin=0.4,
xmax=5.6,
xtick={1,2,3,4,5},
xticklabels={{64},{128},{256},{512},{1024}},ticklabel style = {font=\scriptsize},
ymin=0,
ymax=13.6662,
ylabel style={font=\scriptsize},
ylabel={Solve time (s)},
axis background/.style={fill=white},
title style={font=\scriptsize,yshift=-0.8em},
title={$k=8$ degree/$\ell_1$-Jacobi sweeps},
xmajorgrids,
ymajorgrids
]
\addplot[ybar, bar width=0.145, fill=mycolor1, draw=black, area legend] table[row sep=crcr] {%
1	4.14593\\
2	5.30621\\
3	6.54808\\
4	7.03016\\
5	10.8176\\
};
\addplot[forget plot, color=white!15!black] table[row sep=crcr] {%
0.654545454545455	0\\
5.34545454545454	0\\
};
\addplot[ybar, bar width=0.145, fill=mycolor2, draw=black, area legend] table[row sep=crcr] {%
1	3.85771\\
2	5.41227\\
3	6.02423\\
4	7.32973\\
5	9.26067\\
};
\addplot[forget plot, color=white!15!black] table[row sep=crcr] {%
0.654545454545455	0\\
5.34545454545454	0\\
};
\addplot[ybar, bar width=0.145, fill=mycolor3, draw=black, area legend] table[row sep=crcr] {%
1	4.14938\\
2	5.26763\\
3	6.17834\\
4	6.83205\\
5	9.88275\\
};
\addplot[forget plot, color=white!15!black] table[row sep=crcr] {%
0.654545454545455	0\\
5.34545454545454	0\\
};
\addplot[ybar, bar width=0.145, fill=mycolor4, draw=black, area legend] table[row sep=crcr] {%
1	4.66769\\
2	6.43077\\
3	7.33031\\
4	7.4775\\
5	11.5877\\
};
\addplot[forget plot, color=white!15!black] table[row sep=crcr] {%
0.654545454545455	0\\
5.34545454545454	0\\
};
\end{axis}

\begin{axis}[%
width=0.33\columnwidth,
height=0.168\columnwidth,
at={(0.378\columnwidth,0.269\columnwidth)},
scale only axis,
bar shift auto,
ybar=2*\pgflinewidth,
xmin=0.4,
xmax=5.6,
xtick={1,2,3,4,5},
xticklabels={{64},{128},{256},{512},{1024}},ticklabel style = {font=\scriptsize},
xlabel style={font=\scriptsize},
xlabel={Number of GPUs},
ymin=0,
ymax=13.6662,
axis background/.style={fill=white},
title style={font=\scriptsize,yshift=-0.8em},
title={$k=10$ degree/$\ell_1$-Jacobi sweeps},
xmajorgrids,
ymajorgrids
]
\addplot[ybar, bar width=0.145, fill=mycolor1, draw=black, area legend] table[row sep=crcr] {%
1	4.49684\\
2	5.80622\\
3	6.81808\\
4	8.02738\\
5	12.0688\\
};
\addplot[forget plot, color=white!15!black] table[row sep=crcr] {%
0.654545454545455	0\\
5.34545454545454	0\\
};
\addplot[ybar, bar width=0.145, fill=mycolor2, draw=black, area legend] table[row sep=crcr] {%
1	4.15709\\
2	5.89775\\
3	6.43593\\
4	7.80655\\
5	10.7252\\
};
\addplot[forget plot, color=white!15!black] table[row sep=crcr] {%
0.654545454545455	0\\
5.34545454545454	0\\
};
\addplot[ybar, bar width=0.145, fill=mycolor3, draw=black, area legend] table[row sep=crcr] {%
1	4.49309\\
2	5.708\\
3	7.06671\\
4	7.56971\\
5	10.4504\\
};
\addplot[forget plot, color=white!15!black] table[row sep=crcr] {%
0.654545454545455	0\\
5.34545454545454	0\\
};
\addplot[ybar, bar width=0.145, fill=mycolor4, draw=black, area legend] table[row sep=crcr] {%
1	5.48079\\
2	7.01886\\
3	8.07357\\
4	8.8338\\
5	13.0593\\
};
\addplot[forget plot, color=white!15!black] table[row sep=crcr] {%
0.654545454545455	0\\
5.34545454545454	0\\
};
\end{axis}

\begin{axis}[%
width=0.33\columnwidth,
height=0.168\columnwidth,
at={(0\columnwidth,0\columnwidth)},
scale only axis,
bar shift auto,
ybar=2*\pgflinewidth,
xmin=0.4,
xmax=5.6,
xtick={1,2,3,4,5},
xticklabels={{64},{128},{256},{512},{1024}},ticklabel style = {font=\scriptsize},
xlabel style={font=\scriptsize},
xlabel={Number of GPUs},
ymin=0,
ymax=13.6662,
ylabel style={font=\scriptsize},
ylabel={Solve time (s)},
axis background/.style={fill=white},
title style={font=\scriptsize,yshift=-0.8em},
title={$k=12$ degree/$\ell_1$-Jacobi sweeps},
xmajorgrids,
ymajorgrids
]
\addplot[ybar, bar width=0.145, fill=mycolor1, draw=black, area legend] table[row sep=crcr] {%
1	4.72639\\
2	6.26982\\
3	7.05432\\
4	8.76728\\
5	12.7047\\
};
\addplot[forget plot, color=white!15!black] table[row sep=crcr] {%
0.654545454545455	0\\
5.34545454545454	0\\
};
\addplot[ybar, bar width=0.145, fill=mycolor2, draw=black, area legend] table[row sep=crcr] {%
1	4.78067\\
2	6.20936\\
3	6.92201\\
4	8.43873\\
5	11.2225\\
};
\addplot[forget plot, color=white!15!black] table[row sep=crcr] {%
0.654545454545455	0\\
5.34545454545454	0\\
};
\addplot[ybar, bar width=0.145, fill=mycolor3, draw=black, area legend] table[row sep=crcr] {%
1	4.72902\\
2	6.54575\\
3	7.55415\\
4	8.12151\\
5	11.4161\\
};
\addplot[forget plot, color=white!15!black] table[row sep=crcr] {%
0.654545454545455	0\\
5.34545454545454	0\\
};
\addplot[ybar, bar width=0.145, fill=mycolor4, draw=black, area legend] table[row sep=crcr] {%
1	6.28102\\
2	7.99286\\
3	9.00956\\
4	9.79645\\
5	13.6662\\
};
\addplot[forget plot, color=white!15!black] table[row sep=crcr] {%
0.654545454545455	0\\
5.34545454545454	0\\
};
\end{axis}
\end{tikzpicture}%
	
	\caption{Solve time (s). Time to solution (s) for the Poisson 3D test case employing the multigrid scheme based on the VBM aggregation in conjunction with the different polynomial accelerators and the baseline $\ell_1$-Jacobi for comparison.}
	\label{fig:3d_1024_vbm_tsolve}
\end{figure}
What we observe is that again as the number of iterations of the smoother increases, i.e., the degree of the polynomial, the performance in total solution time worsens and the best ones are obtained for the lowest degree. This is even in the face of the reduction in the total number of iterations observed in Fig.~\ref{fig:3d_1024_vbm_itercount}. 
\begin{figure}[htbp]
	\centering
%
%
\definecolor{mycolor1}{rgb}{0.00000,0.44700,0.74100}%
\definecolor{mycolor2}{rgb}{0.85000,0.32500,0.09800}%
\definecolor{mycolor3}{rgb}{0.92900,0.69400,0.12500}%
\definecolor{mycolor4}{rgb}{0.49400,0.18400,0.55600}%
\begin{tikzpicture}

\begin{axis}[%
width=0.33\columnwidth,
height=0.168\columnwidth,
at={(0\columnwidth,0.543\columnwidth)},
scale only axis,
bar shift auto,ybar=2*\pgflinewidth,
xmin=0.4,
xmax=5.6,
xtick={1,2,3,4,5},
xticklabels={{64},{128},{256},{512},{1024}},ticklabel style = {font=\scriptsize},ticklabel style = {font=\scriptsize},
ymin=0,
ymax=28,
ylabel style={font=\color{white!15!black}},
ylabel={Iterations},
axis background/.style={fill=white},
title style={font=\scriptsize,yshift=-0.8em},
title={$k=4$ degree/$\ell_1$-Jacobi sweeps},
xmajorgrids,
ymajorgrids,
legend columns=4,
legend style={at={(-0.3,1.4)}, anchor=north west, legend cell align=left, align=left, draw=none, fill=none, font=\scriptsize}
]
\addplot[ybar, bar width=0.145, fill=mycolor1, draw=black, area legend] table[row sep=crcr] {%
1	15\\
2	17\\
3	20\\
4	22\\
5	27\\
};
\addplot[forget plot, color=white!15!black] table[row sep=crcr] {%
0.654545454545455	0\\
5.34545454545454	0\\
};
\addlegendentry{Chebyshev 4}

\addplot[ybar, bar width=0.145, fill=mycolor2, draw=black, area legend] table[row sep=crcr] {%
1	15\\
2	17\\
3	20\\
4	22\\
5	27\\
};
\addplot[forget plot, color=white!15!black] table[row sep=crcr] {%
0.654545454545455	0\\
5.34545454545454	0\\
};
\addlegendentry{Opt. Chebyshev 4}

\addplot[ybar, bar width=0.145, fill=mycolor3, draw=black, area legend] table[row sep=crcr] {%
1	15\\
2	17\\
3	20\\
4	22\\
5	27\\
};
\addplot[forget plot, color=white!15!black] table[row sep=crcr] {%
0.654545454545455	0\\
5.34545454545454	0\\
};
\addlegendentry{Opt. Chebyshev 1}

\addplot[ybar, bar width=0.145, fill=mycolor4, draw=black, area legend] table[row sep=crcr] {%
1	16\\
2	19\\
3	22\\
4	23\\
5	28\\
};
\addplot[forget plot, color=white!15!black] table[row sep=crcr] {%
0.654545454545455	0\\
5.34545454545454	0\\
};
\addlegendentry{L1-Jacobi}

\end{axis}

\begin{axis}[%
width=0.33\columnwidth,
height=0.168\columnwidth,
at={(0.378\columnwidth,0.543\columnwidth)},
scale only axis,
bar shift auto,ybar=2*\pgflinewidth,
xmin=0.4,
xmax=5.6,
xtick={1,2,3,4,5},
xticklabels={{64},{128},{256},{512},{1024}},ticklabel style = {font=\scriptsize},
ymin=0,
ymax=28,
axis background/.style={fill=white},
title style={font=\scriptsize,yshift=-0.8em},
title={$k=6$ degree/$\ell_1$-Jacobi sweeps},
xmajorgrids,
ymajorgrids
]
\addplot[ybar, bar width=0.145, fill=mycolor1, draw=black, area legend] table[row sep=crcr] {%
1	14\\
2	16\\
3	19\\
4	20\\
5	25\\
};
\addplot[forget plot, color=white!15!black] table[row sep=crcr] {%
0.654545454545455	0\\
5.34545454545454	0\\
};
\addplot[ybar, bar width=0.145, fill=mycolor2, draw=black, area legend] table[row sep=crcr] {%
1	14\\
2	16\\
3	19\\
4	20\\
5	25\\
};
\addplot[forget plot, color=white!15!black] table[row sep=crcr] {%
0.654545454545455	0\\
5.34545454545454	0\\
};
\addplot[ybar, bar width=0.145, fill=mycolor3, draw=black, area legend] table[row sep=crcr] {%
1	14\\
2	16\\
3	19\\
4	20\\
5	25\\
};
\addplot[forget plot, color=white!15!black] table[row sep=crcr] {%
0.654545454545455	0\\
5.34545454545454	0\\
};
\addplot[ybar, bar width=0.145, fill=mycolor4, draw=black, area legend] table[row sep=crcr] {%
1	16\\
2	18\\
3	21\\
4	22\\
5	28\\
};
\addplot[forget plot, color=white!15!black] table[row sep=crcr] {%
0.654545454545455	0\\
5.34545454545454	0\\
};
\end{axis}

\begin{axis}[%
width=0.33\columnwidth,
height=0.168\columnwidth,
at={(0\columnwidth,0.271\columnwidth)},
scale only axis,
bar shift auto,ybar=2*\pgflinewidth,
xmin=0.4,
xmax=5.6,
xtick={1,2,3,4,5},
xticklabels={{64},{128},{256},{512},{1024}},ticklabel style = {font=\scriptsize},
ymin=0,
ymax=27,
ylabel style={font=\color{white!15!black}},
ylabel={Iterations},
axis background/.style={fill=white},
title style={font=\scriptsize,yshift=-0.8em},
title={$k=8$ degree/$\ell_1$-Jacobi sweeps},
xmajorgrids,
ymajorgrids
]
\addplot[ybar, bar width=0.145, fill=mycolor1, draw=black, area legend] table[row sep=crcr] {%
1	13\\
2	15\\
3	18\\
4	19\\
5	23\\
};
\addplot[forget plot, color=white!15!black] table[row sep=crcr] {%
0.654545454545455	0\\
5.34545454545454	0\\
};
\addplot[ybar, bar width=0.145, fill=mycolor2, draw=black, area legend] table[row sep=crcr] {%
1	12\\
2	15\\
3	17\\
4	19\\
5	23\\
};
\addplot[forget plot, color=white!15!black] table[row sep=crcr] {%
0.654545454545455	0\\
5.34545454545454	0\\
};
\addplot[ybar, bar width=0.145, fill=mycolor3, draw=black, area legend] table[row sep=crcr] {%
1	13\\
2	15\\
3	18\\
4	19\\
5	24\\
};
\addplot[forget plot, color=white!15!black] table[row sep=crcr] {%
0.654545454545455	0\\
5.34545454545454	0\\
};
\addplot[ybar, bar width=0.145, fill=mycolor4, draw=black, area legend] table[row sep=crcr] {%
1	15\\
2	18\\
3	21\\
4	22\\
5	27\\
};
\addplot[forget plot, color=white!15!black] table[row sep=crcr] {%
0.654545454545455	0\\
5.34545454545454	0\\
};
\end{axis}

\begin{axis}[%
width=0.33\columnwidth,
height=0.168\columnwidth,
at={(0.378\columnwidth,0.271\columnwidth)},
scale only axis,
bar shift auto,ybar=2*\pgflinewidth,
xmin=0.4,
xmax=5.6,
xtick={1,2,3,4,5},
xticklabels={{64},{128},{256},{512},{1024}},ticklabel style = {font=\scriptsize},
xlabel style={font=\scriptsize},
xlabel={Number of GPUs},
ymin=0,
ymax=27,
axis background/.style={fill=white},
title style={font=\scriptsize,yshift=-0.8em},
title={$k=10$ degree/$\ell_1$-Jacobi sweeps},
xmajorgrids,
ymajorgrids
]
\addplot[ybar, bar width=0.145, fill=mycolor1, draw=black, area legend] table[row sep=crcr] {%
1	12\\
2	14\\
3	16\\
4	18\\
5	22\\
};
\addplot[forget plot, color=white!15!black] table[row sep=crcr] {%
0.654545454545455	0\\
5.34545454545454	0\\
};
\addplot[ybar, bar width=0.145, fill=mycolor2, draw=black, area legend] table[row sep=crcr] {%
1	11\\
2	14\\
3	16\\
4	18\\
5	22\\
};
\addplot[forget plot, color=white!15!black] table[row sep=crcr] {%
0.654545454545455	0\\
5.34545454545454	0\\
};
\addplot[ybar, bar width=0.145, fill=mycolor3, draw=black, area legend] table[row sep=crcr] {%
1	12\\
2	14\\
3	17\\
4	18\\
5	22\\
};
\addplot[forget plot, color=white!15!black] table[row sep=crcr] {%
0.654545454545455	0\\
5.34545454545454	0\\
};
\addplot[ybar, bar width=0.145, fill=mycolor4, draw=black, area legend] table[row sep=crcr] {%
1	15\\
2	17\\
3	20\\
4	22\\
5	27\\
};
\addplot[forget plot, color=white!15!black] table[row sep=crcr] {%
0.654545454545455	0\\
5.34545454545454	0\\
};
\end{axis}

\begin{axis}[%
width=0.33\columnwidth,
height=0.168\columnwidth,
at={(0\columnwidth,0\columnwidth)},
scale only axis,
bar shift auto,ybar=2*\pgflinewidth,
xmin=0.4,
xmax=5.6,
xtick={1,2,3,4,5},
xticklabels={{64},{128},{256},{512},{1024}},ticklabel style = {font=\scriptsize},
xlabel style={font=\scriptsize},
xlabel={Number of GPUs},
ymin=0,
ymax=26,
ylabel style={font=\color{white!15!black}},
ylabel={Iterations},
axis background/.style={fill=white},
title style={font=\scriptsize,yshift=-0.8em},
title={$k=12$ degree/$\ell_1$-Jacobi sweeps},
xmajorgrids,
ymajorgrids
]
\addplot[ybar, bar width=0.145, fill=mycolor1, draw=black, area legend] table[row sep=crcr] {%
1	11\\
2	13\\
3	15\\
4	17\\
5	21\\
};
\addplot[forget plot, color=white!15!black] table[row sep=crcr] {%
0.654545454545455	0\\
5.34545454545454	0\\
};
\addplot[ybar, bar width=0.145, fill=mycolor2, draw=black, area legend] table[row sep=crcr] {%
1	11\\
2	13\\
3	15\\
4	17\\
5	20\\
};
\addplot[forget plot, color=white!15!black] table[row sep=crcr] {%
0.654545454545455	0\\
5.34545454545454	0\\
};
\addplot[ybar, bar width=0.145, fill=mycolor3, draw=black, area legend] table[row sep=crcr] {%
1	11\\
2	14\\
3	16\\
4	17\\
5	21\\
};
\addplot[forget plot, color=white!15!black] table[row sep=crcr] {%
0.654545454545455	0\\
5.34545454545454	0\\
};
\addplot[ybar, bar width=0.145, fill=mycolor4, draw=black, area legend] table[row sep=crcr] {%
1	15\\
2	17\\
3	20\\
4	21\\
5	26\\
};
\addplot[forget plot, color=white!15!black] table[row sep=crcr] {%
0.654545454545455	0\\
5.34545454545454	0\\
};
\end{axis}

\begin{axis}[%
width=0.859\columnwidth,
height=0.906\columnwidth,
at={(-0.112\columnwidth,-0.1\columnwidth)},
scale only axis,
xmin=0,
xmax=1,
ymin=0,
ymax=1,
axis line style={draw=none},
ticks=none,
axis x line*=bottom,
axis y line*=left
]
\end{axis}
\end{tikzpicture}%
	
	\caption{Iteration count. Total number of FCG iterations for the Poisson 3D test case employing the multigrid scheme based on the VBM aggregation in conjunction with the different polynomial accelerators and the baseline $\ell_1$-Jacobi for comparison.}
	\label{fig:3d_1024_vbm_itercount}
\end{figure}
The decrease in the number of iterations in the face of the increase in cost in terms of sparse matrix-vector products is not sufficient to lower the total solution time. On the other hand, the algorithmic scalability of the method remains consistent with what has been seen in other cases.
\begin{figure}[htbp]
	\centering
%
%
\definecolor{mycolor1}{rgb}{0.00000,0.44700,0.74100}%
\definecolor{mycolor2}{rgb}{0.85000,0.32500,0.09800}%
\definecolor{mycolor3}{rgb}{0.92900,0.69400,0.12500}%
\definecolor{mycolor4}{rgb}{0.49400,0.18400,0.55600}%
\begin{tikzpicture}

\begin{axis}[%
width=0.33\columnwidth,
height=0.168\columnwidth,
at={(0\columnwidth,0.541\columnwidth)},
scale only axis,
bar shift auto,ybar=2*\pgflinewidth,ybar=2*\pgflinewidth,
xmin=0.4,
xmax=5.6,
xtick={1,2,3,4,5},
xticklabels={{64},{128},{256},{512},{1024}},ticklabel style = {font=\scriptsize},
ymin=0,
ymax=0.604985,
ylabel style={font=\scriptsize},
ylabel={Time per iteration (s)},
axis background/.style={fill=white},
title style={font=\scriptsize,yshift=-0.8em},
title={$k=4$ degree/$\ell_1$-Jacobi sweeps},
xmajorgrids,
ymajorgrids,
legend columns=4,
legend style={at={(-0.3,1.4)}, anchor=north west, legend cell align=left, align=left, draw=none, fill=none, font=\scriptsize}
]
\addplot[ybar, bar width=0.145, fill=mycolor1, draw=black, area legend] table[row sep=crcr] {%
1	0.20917\\
2	0.237244\\
3	0.23102\\
4	0.235965\\
5	0.324204\\
};
\addplot[forget plot, color=white!15!black] table[row sep=crcr] {%
0.654545454545455	0\\
5.34545454545454	0\\
};
\addlegendentry{Chebyshev 4}

\addplot[ybar, bar width=0.145, fill=mycolor2, draw=black, area legend] table[row sep=crcr] {%
1	0.208415\\
2	0.245519\\
3	0.234611\\
4	0.24648\\
5	0.299657\\
};
\addplot[forget plot, color=white!15!black] table[row sep=crcr] {%
0.654545454545455	0\\
5.34545454545454	0\\
};
\addlegendentry{Opt. Chebyshev 4}

\addplot[ybar, bar width=0.145, fill=mycolor3, draw=black, area legend] table[row sep=crcr] {%
1	0.210172\\
2	0.241455\\
3	0.225133\\
4	0.23778\\
5	0.277092\\
};
\addplot[forget plot, color=white!15!black] table[row sep=crcr] {%
0.654545454545455	0\\
5.34545454545454	0\\
};
\addlegendentry{Opt. Chebyshev 1}

\addplot[ybar, bar width=0.145, fill=mycolor4, draw=black, area legend] table[row sep=crcr] {%
1	0.203789\\
2	0.230631\\
3	0.226614\\
4	0.226223\\
5	0.297212\\
};
\addplot[forget plot, color=white!15!black] table[row sep=crcr] {%
0.654545454545455	0\\
5.34545454545454	0\\
};
\addlegendentry{L1-Jacobi}

\end{axis}

\begin{axis}[%
width=0.33\columnwidth,
height=0.168\columnwidth,
at={(0.378\columnwidth,0.541\columnwidth)},
scale only axis,
bar shift auto,ybar=2*\pgflinewidth,
xmin=0.4,
xmax=5.6,
xtick={1,2,3,4,5},
xticklabels={{64},{128},{256},{512},{1024}},ticklabel style = {font=\scriptsize},
ymin=0,
ymax=0.604985,
axis background/.style={fill=white},
title style={font=\scriptsize,yshift=-0.8em},
title={$k=6$ degree/$\ell_1$-Jacobi sweeps},
xmajorgrids,
ymajorgrids
]
\addplot[ybar, bar width=0.145, fill=mycolor1, draw=black, area legend] table[row sep=crcr] {%
1	0.263752\\
2	0.295423\\
3	0.299531\\
4	0.297\\
5	0.394375\\
};
\addplot[forget plot, color=white!15!black] table[row sep=crcr] {%
0.654545454545455	0\\
5.34545454545454	0\\
};
\addplot[ybar, bar width=0.145, fill=mycolor2, draw=black, area legend] table[row sep=crcr] {%
1	0.263538\\
2	0.301651\\
3	0.291868\\
4	0.327956\\
5	0.347083\\
};
\addplot[forget plot, color=white!15!black] table[row sep=crcr] {%
0.654545454545455	0\\
5.34545454545454	0\\
};
\addplot[ybar, bar width=0.145, fill=mycolor3, draw=black, area legend] table[row sep=crcr] {%
1	0.266687\\
2	0.292187\\
3	0.2835\\
4	0.296464\\
5	0.33533\\
};
\addplot[forget plot, color=white!15!black] table[row sep=crcr] {%
0.654545454545455	0\\
5.34545454545454	0\\
};
\addplot[ybar, bar width=0.145, fill=mycolor4, draw=black, area legend] table[row sep=crcr] {%
1	0.256667\\
2	0.299349\\
3	0.285025\\
4	0.282548\\
5	0.36893\\
};
\addplot[forget plot, color=white!15!black] table[row sep=crcr] {%
0.654545454545455	0\\
5.34545454545454	0\\
};
\end{axis}

\begin{axis}[%
width=0.33\columnwidth,
height=0.168\columnwidth,
at={(0\columnwidth,0.271\columnwidth)},
scale only axis,
bar shift auto,ybar=2*\pgflinewidth,
xmin=0.4,
xmax=5.6,
xtick={1,2,3,4,5},
xticklabels={{64},{128},{256},{512},{1024}},ticklabel style = {font=\scriptsize},
ymin=0,
ymax=0.604985,
ylabel style={font=\scriptsize},
ylabel={Time per iteration (s)},
axis background/.style={fill=white},
title style={font=\scriptsize,yshift=-0.8em},
title={$k=8$ degree/$\ell_1$-Jacobi sweeps},
xmajorgrids,
ymajorgrids
]
\addplot[ybar, bar width=0.145, fill=mycolor1, draw=black, area legend] table[row sep=crcr] {%
1	0.318918\\
2	0.353748\\
3	0.363782\\
4	0.370008\\
5	0.470332\\
};
\addplot[forget plot, color=white!15!black] table[row sep=crcr] {%
0.654545454545455	0\\
5.34545454545454	0\\
};
\addplot[ybar, bar width=0.145, fill=mycolor2, draw=black, area legend] table[row sep=crcr] {%
1	0.321476\\
2	0.360818\\
3	0.354366\\
4	0.385775\\
5	0.402638\\
};
\addplot[forget plot, color=white!15!black] table[row sep=crcr] {%
0.654545454545455	0\\
5.34545454545454	0\\
};
\addplot[ybar, bar width=0.145, fill=mycolor3, draw=black, area legend] table[row sep=crcr] {%
1	0.319183\\
2	0.351176\\
3	0.343241\\
4	0.359582\\
5	0.411781\\
};
\addplot[forget plot, color=white!15!black] table[row sep=crcr] {%
0.654545454545455	0\\
5.34545454545454	0\\
};
\addplot[ybar, bar width=0.145, fill=mycolor4, draw=black, area legend] table[row sep=crcr] {%
1	0.311179\\
2	0.357265\\
3	0.349062\\
4	0.339886\\
5	0.429174\\
};
\addplot[forget plot, color=white!15!black] table[row sep=crcr] {%
0.654545454545455	0\\
5.34545454545454	0\\
};
\end{axis}

\begin{axis}[%
width=0.33\columnwidth,
height=0.168\columnwidth,
at={(0.378\columnwidth,0.271\columnwidth)},
scale only axis,
bar shift auto,ybar=2*\pgflinewidth,
xmin=0.4,
xmax=5.6,
xtick={1,2,3,4,5},
xticklabels={{64},{128},{256},{512},{1024}},ticklabel style = {font=\scriptsize},
xlabel style={font=\scriptsize},
xlabel={Number of GPUs},
ymin=0,
ymax=0.604985,
axis background/.style={fill=white},
title style={font=\scriptsize,yshift=-0.8em},
title={$k=10$ degree/$\ell_1$-Jacobi sweeps},
xmajorgrids,
ymajorgrids
]
\addplot[ybar, bar width=0.145, fill=mycolor1, draw=black, area legend] table[row sep=crcr] {%
1	0.374737\\
2	0.41473\\
3	0.42613\\
4	0.445966\\
5	0.548582\\
};
\addplot[forget plot, color=white!15!black] table[row sep=crcr] {%
0.654545454545455	0\\
5.34545454545454	0\\
};
\addplot[ybar, bar width=0.145, fill=mycolor2, draw=black, area legend] table[row sep=crcr] {%
1	0.377917\\
2	0.421268\\
3	0.402246\\
4	0.433697\\
5	0.487511\\
};
\addplot[forget plot, color=white!15!black] table[row sep=crcr] {%
0.654545454545455	0\\
5.34545454545454	0\\
};
\addplot[ybar, bar width=0.145, fill=mycolor3, draw=black, area legend] table[row sep=crcr] {%
1	0.374424\\
2	0.407714\\
3	0.415689\\
4	0.42054\\
5	0.475019\\
};
\addplot[forget plot, color=white!15!black] table[row sep=crcr] {%
0.654545454545455	0\\
5.34545454545454	0\\
};
\addplot[ybar, bar width=0.145, fill=mycolor4, draw=black, area legend] table[row sep=crcr] {%
1	0.365386\\
2	0.412874\\
3	0.403678\\
4	0.401536\\
5	0.483678\\
};
\addplot[forget plot, color=white!15!black] table[row sep=crcr] {%
0.654545454545455	0\\
5.34545454545454	0\\
};
\end{axis}

\begin{axis}[%
width=0.33\columnwidth,
height=0.168\columnwidth,
at={(0\columnwidth,0\columnwidth)},
scale only axis,
bar shift auto,ybar=2*\pgflinewidth,
xmin=0.4,
xmax=5.6,
xtick={1,2,3,4,5},
xticklabels={{64},{128},{256},{512},{1024}},ticklabel style = {font=\scriptsize},
xlabel style={font=\scriptsize},
xlabel={Number of GPUs},
ymin=0,
ymax=0.604985,
ylabel style={font=\scriptsize},
ylabel={Time per iteration (s)},
axis background/.style={fill=white},
title style={font=\scriptsize,yshift=-0.8em},
title={$k=12$ degree/$\ell_1$-Jacobi sweeps},
xmajorgrids,
ymajorgrids
]
\addplot[ybar, bar width=0.145, fill=mycolor1, draw=black, area legend] table[row sep=crcr] {%
1	0.429671\\
2	0.482294\\
3	0.470288\\
4	0.515722\\
5	0.604985\\
};
\addplot[forget plot, color=white!15!black] table[row sep=crcr] {%
0.654545454545455	0\\
5.34545454545454	0\\
};
\addplot[ybar, bar width=0.145, fill=mycolor2, draw=black, area legend] table[row sep=crcr] {%
1	0.434606\\
2	0.477643\\
3	0.461467\\
4	0.496396\\
5	0.561125\\
};
\addplot[forget plot, color=white!15!black] table[row sep=crcr] {%
0.654545454545455	0\\
5.34545454545454	0\\
};
\addplot[ybar, bar width=0.145, fill=mycolor3, draw=black, area legend] table[row sep=crcr] {%
1	0.429911\\
2	0.467554\\
3	0.472134\\
4	0.477736\\
5	0.543626\\
};
\addplot[forget plot, color=white!15!black] table[row sep=crcr] {%
0.654545454545455	0\\
5.34545454545454	0\\
};
\addplot[ybar, bar width=0.145, fill=mycolor4, draw=black, area legend] table[row sep=crcr] {%
1	0.418735\\
2	0.470168\\
3	0.450478\\
4	0.466498\\
5	0.525623\\
};
\addplot[forget plot, color=white!15!black] table[row sep=crcr] {%
0.654545454545455	0\\
5.34545454545454	0\\
};
\end{axis}

\end{tikzpicture}%
	
	\caption{Time per iteration. Time per iteration for the Poisson 3D test case employing the multigrid scheme based on the VBM aggregation in conjunction with the different polynomial accelerators and the baseline $\ell_1$-Jacobi for comparison.}
	\label{fig:3d_1024_vbm_timeperiteration}
\end{figure}
The picture in Fig.~\ref{fig:3d_1024_vbm_timeperiteration} shows how the cost per iteration remains comparable to that of the $\ell_1$-Jacobi method as the number of GPUs increases. While the cost increases proportionally to the degree of the polynomial, i.e. the number of sparse matrix-vector products used. We finally observe that the time per iteration for the VBM aggregation is smaller than those for the compatible matching coarsening, therefore, although the number of iteration for compatible matching is generally smaller, the solve time obtained with the VBM scheme are smaller due to the smaller operator complexity of the built matrix hierarchies.

\subsection{Anisotropy 2D}
The following two sections contain additional information regarding the 2D anisotropic problem with angle $\theta = \pi/6$ and $\epsilon = 100$. They extend the results contained in Section~\ref{sec:anisotropy2D} adding to the considerations the effect of the smoothers for a higher degree of the polynomial and the use of matching-based aggregation.

\subsubsection{VBM}\label{sec:sm_vbm_aniso} In Fig.~\ref{fig:anisotropy_1024_soc1_tsolve} we report the time needed to reach the solution of the linear system by keeping the multigrid preconditioner hierarchy fixed and changing the smoother and between the different types of polynomial acceleration and with respect to the degree--number of sparse matrix-vector products used per iteration; we extend the results in Fig.~\ref{fig:anisotropy_1024_soc1_tsolve_reduced} by considering polynomial of higher degree while plotting all the figures on the same scale.
\begin{figure}[p]
	\centering
%
%
\definecolor{mycolor1}{rgb}{0.00000,0.44700,0.74100}%
\definecolor{mycolor2}{rgb}{0.85000,0.32500,0.09800}%
\definecolor{mycolor3}{rgb}{0.92900,0.69400,0.12500}%
\definecolor{mycolor4}{rgb}{0.49400,0.18400,0.55600}%
\begin{tikzpicture}

\begin{axis}[%
width=0.33\columnwidth,
height=0.168\columnwidth,
at={(0\columnwidth,0.466\columnwidth)},
scale only axis,
bar shift auto,
xmin=0.4,
xmax=5.6,
xtick={1,2,3,4,5},
xticklabels={{64},{128},{256},{512},{1024}},ticklabel style = {font=\scriptsize},
ymin=0,
ymax=295.042,
ybar=2*\pgflinewidth,
ylabel style={font=\scriptsize},
ylabel={Solve time (s)},
axis background/.style={fill=white},
title style={font=\scriptsize,yshift=-0.8em},
title={$k = 4$ degree/$\ell_1$-Jacobi sweeps},
xmajorgrids,
ymajorgrids,
legend columns=4,
legend style={at={(-0.3,1.4)}, anchor=north west, legend cell align=left, align=left, draw=none, fill=none, font=\scriptsize}
]
\addplot[ybar, bar width=0.145, fill=mycolor1, draw=black, area legend] table[row sep=crcr] {%
1	30.1411\\
2	44.9588\\
3	58.2224\\
4	96.3833\\
5	156.201\\
};
\addplot[forget plot, color=white!15!black] table[row sep=crcr] {%
0.654545454545455	0\\
5.34545454545454	0\\
};
\addlegendentry{Chebyshev 4}

\addplot[ybar, bar width=0.145, fill=mycolor2, draw=black, area legend] table[row sep=crcr] {%
1	30.2535\\
2	42.6847\\
3	58.8012\\
4	96.4112\\
5	150.529\\
};
\addplot[forget plot, color=white!15!black] table[row sep=crcr] {%
0.654545454545455	0\\
5.34545454545454	0\\
};
\addlegendentry{Opt. Chebyshev 4}

\addplot[ybar, bar width=0.145, fill=mycolor3, draw=black, area legend] table[row sep=crcr] {%
1	29.5266\\
2	44.2639\\
3	59.34\\
4	96.0507\\
5	158.523\\
};
\addplot[forget plot, color=white!15!black] table[row sep=crcr] {%
0.654545454545455	0\\
5.34545454545454	0\\
};
\addlegendentry{Opt. Chebyshev 1}

\addplot[ybar, bar width=0.145, fill=mycolor4, draw=black, area legend] table[row sep=crcr] {%
1	34.5084\\
2	47.7808\\
3	59.4805\\
4	99.6036\\
5	164.831\\
};
\addplot[forget plot, color=white!15!black] table[row sep=crcr] {%
0.654545454545455	0\\
5.34545454545454	0\\
};
\addlegendentry{$\ell_1$-Jacobi}

\end{axis}

\begin{axis}[%
width=0.33\columnwidth,
height=0.168\columnwidth,
at={(0.378\columnwidth,0.466\columnwidth)},
scale only axis,
bar shift auto,
ybar=2*\pgflinewidth,
xmin=0.4,
xmax=5.6,
xtick={1,2,3,4,5},
xticklabels={{64},{128},{256},{512},{1024}},ticklabel style = {font=\scriptsize},
ymin=0,
ymax=295.042,
axis background/.style={fill=white},
title style={font=\scriptsize,yshift=-0.8em},
title={$k = 6$ degree/$\ell_1$-Jacobi sweeps},
xmajorgrids,
ymajorgrids
]
\addplot[ybar, bar width=0.145, fill=mycolor1, draw=black, area legend] table[row sep=crcr] {%
1	37.6931\\
2	53.9501\\
3	63.6195\\
4	117.123\\
5	175.691\\
};
\addplot[forget plot, color=white!15!black] table[row sep=crcr] {%
0.654545454545455	0\\
5.34545454545454	0\\
};
\addplot[ybar, bar width=0.145, fill=mycolor2, draw=black, area legend] table[row sep=crcr] {%
1	35.7975\\
2	50.807\\
3	65.0002\\
4	109.008\\
5	177.742\\
};
\addplot[forget plot, color=white!15!black] table[row sep=crcr] {%
0.654545454545455	0\\
5.34545454545454	0\\
};
\addplot[ybar, bar width=0.145, fill=mycolor3, draw=black, area legend] table[row sep=crcr] {%
1	36.6786\\
2	51.8935\\
3	69.8769\\
4	112.191\\
5	188.397\\
};
\addplot[forget plot, color=white!15!black] table[row sep=crcr] {%
0.654545454545455	0\\
5.34545454545454	0\\
};
\addplot[ybar, bar width=0.145, fill=mycolor4, draw=black, area legend] table[row sep=crcr] {%
1	39.027\\
2	58.439\\
3	74.4741\\
4	120.732\\
5	221.694\\
};
\addplot[forget plot, color=white!15!black] table[row sep=crcr] {%
0.654545454545455	0\\
5.34545454545454	0\\
};
\end{axis}

\begin{axis}[%
width=0.33\columnwidth,
height=0.168\columnwidth,
at={(0\columnwidth,0.233\columnwidth)},
scale only axis,
bar shift auto,
ybar=2*\pgflinewidth,
xmin=0.4,
xmax=5.6,
xtick={1,2,3,4,5},
xticklabels={{64},{128},{256},{512},{1024}},ticklabel style = {font=\scriptsize},
ymin=0,
ymax=295.042,
ylabel style={font=\scriptsize},
ylabel={Solve time (s)},
axis background/.style={fill=white},
title style={font=\scriptsize,yshift=-0.8em},
title={$k = 8$ degree/$\ell_1$-Jacobi sweeps},
xmajorgrids,
ymajorgrids
]
\addplot[ybar, bar width=0.145, fill=mycolor1, draw=black, area legend] table[row sep=crcr] {%
1	41.3937\\
2	56.9124\\
3	76.7473\\
4	118.292\\
5	190.132\\
};
\addplot[forget plot, color=white!15!black] table[row sep=crcr] {%
0.654545454545455	0\\
5.34545454545454	0\\
};
\addplot[ybar, bar width=0.145, fill=mycolor2, draw=black, area legend] table[row sep=crcr] {%
1	41.4211\\
2	57.8245\\
3	73.5949\\
4	119.128\\
5	202.274\\
};
\addplot[forget plot, color=white!15!black] table[row sep=crcr] {%
0.654545454545455	0\\
5.34545454545454	0\\
};
\addplot[ybar, bar width=0.145, fill=mycolor3, draw=black, area legend] table[row sep=crcr] {%
1	43.1834\\
2	62.3894\\
3	76.2948\\
4	123.076\\
5	189.555\\
};
\addplot[forget plot, color=white!15!black] table[row sep=crcr] {%
0.654545454545455	0\\
5.34545454545454	0\\
};
\addplot[ybar, bar width=0.145, fill=mycolor4, draw=black, area legend] table[row sep=crcr] {%
1	47.6126\\
2	65.7615\\
3	89.5874\\
4	152.689\\
5	237.245\\
};
\addplot[forget plot, color=white!15!black] table[row sep=crcr] {%
0.654545454545455	0\\
5.34545454545454	0\\
};
\end{axis}

\begin{axis}[%
width=0.33\columnwidth,
height=0.168\columnwidth,
at={(0.378\columnwidth,0.233\columnwidth)},
scale only axis,
bar shift auto,
ybar=2*\pgflinewidth,
xmin=0.4,
xmax=5.6,
xtick={1,2,3,4,5},
xticklabels={{64},{128},{256},{512},{1024}},ticklabel style = {font=\scriptsize},
xlabel style={font=\scriptsize},
xlabel={Number of GPUs},
ymin=0,
ymax=295.042,
axis background/.style={fill=white},
title style={font=\scriptsize,yshift=-0.8em},
title={$k = 10$ degree/$\ell_1$-Jacobi sweeps},
xmajorgrids,
ymajorgrids
]
\addplot[ybar, bar width=0.145, fill=mycolor1, draw=black, area legend] table[row sep=crcr] {%
1	48.8974\\
2	66.22\\
3	83.5298\\
4	134.763\\
5	191.961\\
};
\addplot[forget plot, color=white!15!black] table[row sep=crcr] {%
0.654545454545455	0\\
5.34545454545454	0\\
};
\addplot[ybar, bar width=0.145, fill=mycolor2, draw=black, area legend] table[row sep=crcr] {%
1	49.7635\\
2	63.0106\\
3	78.2923\\
4	131.657\\
5	187.803\\
};
\addplot[forget plot, color=white!15!black] table[row sep=crcr] {%
0.654545454545455	0\\
5.34545454545454	0\\
};
\addplot[ybar, bar width=0.145, fill=mycolor3, draw=black, area legend] table[row sep=crcr] {%
1	49.4599\\
2	63.4882\\
3	79.2382\\
4	131.929\\
5	190.865\\
};
\addplot[forget plot, color=white!15!black] table[row sep=crcr] {%
0.654545454545455	0\\
5.34545454545454	0\\
};
\addplot[ybar, bar width=0.145, fill=mycolor4, draw=black, area legend] table[row sep=crcr] {%
1	55.2658\\
2	79.2231\\
3	100.932\\
4	168.261\\
5	261.055\\
};
\addplot[forget plot, color=white!15!black] table[row sep=crcr] {%
0.654545454545455	0\\
5.34545454545454	0\\
};
\end{axis}

\begin{axis}[%
width=0.33\columnwidth,
height=0.168\columnwidth,
at={(0\columnwidth,0\columnwidth)},
scale only axis,
bar shift auto,
ybar=2*\pgflinewidth,
xmin=0.4,
xmax=5.6,
xtick={1,2,3,4,5},
xticklabels={{64},{128},{256},{512},{1024}},ticklabel style = {font=\scriptsize},
xlabel style={font=\scriptsize},
xlabel={Number of GPUs},
ymin=0,
ymax=295.042,
ylabel style={font=\scriptsize},
ylabel={Solve time (s)},
axis background/.style={fill=white},
title style={font=\scriptsize,yshift=-0.8em},
title={$k = 12$ degree/$\ell_1$-Jacobi sweeps},
xmajorgrids,
ymajorgrids
]
\addplot[ybar, bar width=0.145, fill=mycolor1, draw=black, area legend] table[row sep=crcr] {%
1	54.2829\\
2	69.3239\\
3	87.9076\\
4	143.894\\
5	205.643\\
};
\addplot[forget plot, color=white!15!black] table[row sep=crcr] {%
0.654545454545455	0\\
5.34545454545454	0\\
};
\addplot[ybar, bar width=0.145, fill=mycolor2, draw=black, area legend] table[row sep=crcr] {%
1	51.3312\\
2	70.645\\
3	86.3897\\
4	135.271\\
5	203.488\\
};
\addplot[forget plot, color=white!15!black] table[row sep=crcr] {%
0.654545454545455	0\\
5.34545454545454	0\\
};
\addplot[ybar, bar width=0.145, fill=mycolor3, draw=black, area legend] table[row sep=crcr] {%
1	51.8609\\
2	72.2986\\
3	86.9647\\
4	154.646\\
5	218.241\\
};
\addplot[forget plot, color=white!15!black] table[row sep=crcr] {%
0.654545454545455	0\\
5.34545454545454	0\\
};
\addplot[ybar, bar width=0.145, fill=mycolor4, draw=black, area legend] table[row sep=crcr] {%
1	62.899\\
2	88.0817\\
3	112.498\\
4	194.632\\
5	295.042\\
};
\addplot[forget plot, color=white!15!black] table[row sep=crcr] {%
0.654545454545455	0\\
5.34545454545454	0\\
};
\end{axis}
\end{tikzpicture}%
	\caption{Solve time (s). Time to solution (s) for the Anisotropy 2D test case $(\varepsilon=100,\theta=\pi/6)$ employing the multigrid scheme based on the VBM aggregation in conjunction with the different polynomial accelerators and the baseline $\ell_1$-Jacobi for comparison.}
	\label{fig:anisotropy_1024_soc1_tsolve}
\end{figure}
We observe that the best solution time is obtained for the smoother of lower degree, even if the number of iterations to convergence is decreased with increasing $k$, as depicted in Fig.~\ref{fig:anisotropy_1024_soc1_itercount}.
\begin{figure}[p]
	\centering
%
%
\definecolor{mycolor1}{rgb}{0.00000,0.44700,0.74100}%
\definecolor{mycolor2}{rgb}{0.85000,0.32500,0.09800}%
\definecolor{mycolor3}{rgb}{0.92900,0.69400,0.12500}%
\definecolor{mycolor4}{rgb}{0.49400,0.18400,0.55600}%
\begin{tikzpicture}

\begin{axis}[%
width=0.33\columnwidth,
height=0.168\columnwidth,
at={(0\columnwidth,0.463\columnwidth)},
scale only axis,
bar shift auto,
xmin=0.4,
xmax=5.6,
xtick={1,2,3,4,5},
xticklabels={{64},{128},{256},{512},{1024}},ticklabel style = {font=\scriptsize},
xlabel style={font=\scriptsize},
ybar=2*\pgflinewidth,
ymin=0,
ymax=611,
ylabel style={font=\scriptsize},
ylabel={Iterations},
axis background/.style={fill=white},
title style={font=\scriptsize,yshift=-0.8em},
title={$k=4$ degree/$\ell_1$-sweeps},
xmajorgrids,
ymajorgrids,
legend columns=4,
legend style={at={(-0.3,1.4)}, anchor=north west, legend cell align=left, align=left, draw=none, fill=none, font=\scriptsize}
]
\addplot[ybar, bar width=0.145, fill=mycolor1, draw=black, area legend] table[row sep=crcr] {%
1	160\\
2	209\\
3	256\\
4	391\\
5	569\\
};
\addplot[forget plot, color=white!15!black] table[row sep=crcr] {%
0.654545454545455	0\\
5.34545454545454	0\\
};
\addlegendentry{Chebyshev 4}

\addplot[ybar, bar width=0.145, fill=mycolor2, draw=black, area legend] table[row sep=crcr] {%
1	159\\
2	209\\
3	254\\
4	387\\
5	562\\
};
\addplot[forget plot, color=white!15!black] table[row sep=crcr] {%
0.654545454545455	0\\
5.34545454545454	0\\
};
\addlegendentry{Opt. Chebyshev 4}

\addplot[ybar, bar width=0.145, fill=mycolor3, draw=black, area legend] table[row sep=crcr] {%
1	159\\
2	209\\
3	254\\
4	391\\
5	567\\
};
\addplot[forget plot, color=white!15!black] table[row sep=crcr] {%
0.654545454545455	0\\
5.34545454545454	0\\
};
\addlegendentry{Opt. Chebyshev 1}

\addplot[ybar, bar width=0.145, fill=mycolor4, draw=black, area legend] table[row sep=crcr] {%
1	173\\
2	225\\
3	272\\
4	420\\
5	611\\
};
\addplot[forget plot, color=white!15!black] table[row sep=crcr] {%
0.654545454545455	0\\
5.34545454545454	0\\
};
\addlegendentry{L1-Jacobi}

\end{axis}

\begin{axis}[%
width=0.33\columnwidth,
height=0.168\columnwidth,
at={(0.378\columnwidth,0.463\columnwidth)},
scale only axis,
bar shift auto,
xmin=0.4,
xmax=5.6,
xtick={1,2,3,4,5},
xticklabels={{64},{128},{256},{512},{1024}},ticklabel style = {font=\scriptsize},
xlabel style={font=\scriptsize},
ybar=2*\pgflinewidth,
ymin=0,
ymax=611,
axis background/.style={fill=white},
title style={font=\scriptsize,yshift=-0.8em},
title={$k=6$ degree/$\ell_1$-sweeps},
xmajorgrids,
ymajorgrids
]
\addplot[ybar, bar width=0.145, fill=mycolor1, draw=black, area legend] table[row sep=crcr] {%
1	149\\
2	196\\
3	238\\
4	361\\
5	519\\
};
\addplot[forget plot, color=white!15!black] table[row sep=crcr] {%
0.654545454545455	0\\
5.34545454545454	0\\
};
\addplot[ybar, bar width=0.145, fill=mycolor2, draw=black, area legend] table[row sep=crcr] {%
1	148\\
2	194\\
3	234\\
4	354\\
5	507\\
};
\addplot[forget plot, color=white!15!black] table[row sep=crcr] {%
0.654545454545455	0\\
5.34545454545454	0\\
};
\addplot[ybar, bar width=0.145, fill=mycolor3, draw=black, area legend] table[row sep=crcr] {%
1	151\\
2	198\\
3	240\\
4	363\\
5	523\\
};
\addplot[forget plot, color=white!15!black] table[row sep=crcr] {%
0.654545454545455	0\\
5.34545454545454	0\\
};
\addplot[ybar, bar width=0.145, fill=mycolor4, draw=black, area legend] table[row sep=crcr] {%
1	167\\
2	218\\
3	264\\
4	410\\
5	599\\
};
\addplot[forget plot, color=white!15!black] table[row sep=crcr] {%
0.654545454545455	0\\
5.34545454545454	0\\
};
\end{axis}

\begin{axis}[%
width=0.33\columnwidth,
height=0.168\columnwidth,
at={(0\columnwidth,0.232\columnwidth)},
scale only axis,
bar shift auto,
xmin=0.4,
xmax=5.6,
xtick={1,2,3,4,5},
xticklabels={{64},{128},{256},{512},{1024}},ticklabel style = {font=\scriptsize},
xlabel style={font=\scriptsize},
ybar=2*\pgflinewidth,
ymin=0,
ymax=611,
ylabel style={font=\scriptsize},
ylabel={Iterations},
axis background/.style={fill=white},
title style={font=\scriptsize,yshift=-0.8em},
title={$k=8$ degree/$\ell_1$-sweeps},
xmajorgrids,
ymajorgrids
]
\addplot[ybar, bar width=0.145, fill=mycolor1, draw=black, area legend] table[row sep=crcr] {%
1	143\\
2	181\\
3	223\\
4	331\\
5	474\\
};
\addplot[forget plot, color=white!15!black] table[row sep=crcr] {%
0.654545454545455	0\\
5.34545454545454	0\\
};
\addplot[ybar, bar width=0.145, fill=mycolor2, draw=black, area legend] table[row sep=crcr] {%
1	141\\
2	180\\
3	217\\
4	322\\
5	457\\
};
\addplot[forget plot, color=white!15!black] table[row sep=crcr] {%
0.654545454545455	0\\
5.34545454545454	0\\
};
\addplot[ybar, bar width=0.145, fill=mycolor3, draw=black, area legend] table[row sep=crcr] {%
1	145\\
2	185\\
3	225\\
4	338\\
5	482\\
};
\addplot[forget plot, color=white!15!black] table[row sep=crcr] {%
0.654545454545455	0\\
5.34545454545454	0\\
};
\addplot[ybar, bar width=0.145, fill=mycolor4, draw=black, area legend] table[row sep=crcr] {%
1	163\\
2	213\\
3	260\\
4	402\\
5	587\\
};
\addplot[forget plot, color=white!15!black] table[row sep=crcr] {%
0.654545454545455	0\\
5.34545454545454	0\\
};
\end{axis}

\begin{axis}[%
width=0.33\columnwidth,
height=0.168\columnwidth,
at={(0.378\columnwidth,0.232\columnwidth)},
scale only axis,
bar shift auto,
xmin=0.4,
xmax=5.6,
xtick={1,2,3,4,5},
xticklabels={{64},{128},{256},{512},{1024}},ticklabel style = {font=\scriptsize},
xlabel style={font=\scriptsize},
xlabel={Number of GPUs},
ybar=2*\pgflinewidth,
ymin=0,
ymax=611,
axis background/.style={fill=white},
title style={font=\scriptsize,yshift=-0.8em},
title={$k=10$ degree/$\ell_1$-sweeps},
xmajorgrids,
ymajorgrids
]
\addplot[ybar, bar width=0.145, fill=mycolor1, draw=black, area legend] table[row sep=crcr] {%
1	138\\
2	173\\
3	207\\
4	310\\
5	429\\
};
\addplot[forget plot, color=white!15!black] table[row sep=crcr] {%
0.654545454545455	0\\
5.34545454545454	0\\
};
\addplot[ybar, bar width=0.145, fill=mycolor2, draw=black, area legend] table[row sep=crcr] {%
1	137\\
2	171\\
3	202\\
4	301\\
5	413\\
};
\addplot[forget plot, color=white!15!black] table[row sep=crcr] {%
0.654545454545455	0\\
5.34545454545454	0\\
};
\addplot[ybar, bar width=0.145, fill=mycolor3, draw=black, area legend] table[row sep=crcr] {%
1	140\\
2	177\\
3	209\\
4	318\\
5	443\\
};
\addplot[forget plot, color=white!15!black] table[row sep=crcr] {%
0.654545454545455	0\\
5.34545454545454	0\\
};
\addplot[ybar, bar width=0.145, fill=mycolor4, draw=black, area legend] table[row sep=crcr] {%
1	160\\
2	210\\
3	256\\
4	395\\
5	577\\
};
\addplot[forget plot, color=white!15!black] table[row sep=crcr] {%
0.654545454545455	0\\
5.34545454545454	0\\
};
\end{axis}

\begin{axis}[%
width=0.33\columnwidth,
height=0.168\columnwidth,
at={(0\columnwidth,0\columnwidth)},
scale only axis,
bar shift auto,
xmin=0.4,
xmax=5.6,
xtick={1,2,3,4,5},
xticklabels={{64},{128},{256},{512},{1024}},ticklabel style = {font=\scriptsize},
xlabel style={font=\scriptsize},
xlabel={Number of GPUs},
ybar=2*\pgflinewidth,
ymin=0,
ymax=611,
ylabel style={font=\scriptsize},
ylabel={Iterations},
axis background/.style={fill=white},
title style={font=\scriptsize,yshift=-0.8em},
title={$k=12$ degree/$\ell_1$-sweeps},
xmajorgrids,
ymajorgrids
]
\addplot[ybar, bar width=0.145, fill=mycolor1, draw=black, area legend] table[row sep=crcr] {%
1	133\\
2	165\\
3	194\\
4	291\\
5	398\\
};
\addplot[forget plot, color=white!15!black] table[row sep=crcr] {%
0.654545454545455	0\\
5.34545454545454	0\\
};
\addplot[ybar, bar width=0.145, fill=mycolor2, draw=black, area legend] table[row sep=crcr] {%
1	132\\
2	164\\
3	188\\
4	275\\
5	380\\
};
\addplot[forget plot, color=white!15!black] table[row sep=crcr] {%
0.654545454545455	0\\
5.34545454545454	0\\
};
\addplot[ybar, bar width=0.145, fill=mycolor3, draw=black, area legend] table[row sep=crcr] {%
1	135\\
2	170\\
3	202\\
4	301\\
5	414\\
};
\addplot[forget plot, color=white!15!black] table[row sep=crcr] {%
0.654545454545455	0\\
5.34545454545454	0\\
};
\addplot[ybar, bar width=0.145, fill=mycolor4, draw=black, area legend] table[row sep=crcr] {%
1	158\\
2	207\\
3	253\\
4	390\\
5	568\\
};
\addplot[forget plot, color=white!15!black] table[row sep=crcr] {%
0.654545454545455	0\\
5.34545454545454	0\\
};
\end{axis}
\end{tikzpicture}%
	\caption{Iteration count. Total number of FCG iterations for the Anisotropy 2D test case $(\varepsilon=100,\theta=\pi/6)$ employing the multigrid scheme based on the VBM aggregation in conjunction with the different polynomial accelerators and the baseline $\ell_1$-Jacobi for comparison.}
	\label{fig:anisotropy_1024_soc1_itercount}
\end{figure}
The decrease is not sufficient to compensate for the increase in application cost, i.e., the increase in the number of sparse matrix-vector products.

Finally we focus on the scalability of implementing different polynomial smoothers on the GPU, looking at the time per iteration in Fig.~\ref{fig:anisotropy_1024_soc1_timeperiteration}.
\begin{figure}[p]
	\centering
%
%
\definecolor{mycolor1}{rgb}{0.00000,0.44700,0.74100}%
\definecolor{mycolor2}{rgb}{0.85000,0.32500,0.09800}%
\definecolor{mycolor3}{rgb}{0.92900,0.69400,0.12500}%
\definecolor{mycolor4}{rgb}{0.49400,0.18400,0.55600}%
\begin{tikzpicture}

\begin{axis}[%
width=0.33\columnwidth,
height=0.168\columnwidth,
at={(0\columnwidth,0.467\columnwidth)},
scale only axis,
bar shift auto,
xmin=0.4,
xmax=5.6,
xtick={1,2,3,4,5},
xticklabels={{64},{128},{256},{512},{1024}},ticklabel style = {font=\scriptsize},
ymin=0,
ymax=0.535495,
ybar=2*\pgflinewidth,
ylabel style={font=\scriptsize},
ylabel={Time per iteration (s)},
axis background/.style={fill=white},
title style={font=\scriptsize,yshift=-0.8em},
title={$k = 4$ degree/$\ell_1$-Jacobi sweeps},
xmajorgrids,
ymajorgrids,
legend columns=4,
legend style={at={(-0.3,1.4)}, anchor=north west, legend cell align=left, align=left, draw=none, fill=none, font=\scriptsize}
]
\addplot[ybar, bar width=0.145, fill=mycolor1, draw=black, area legend] table[row sep=crcr] {%
1	0.188382\\
2	0.215114\\
3	0.227431\\
4	0.246505\\
5	0.274518\\
};
\addplot[forget plot, color=white!15!black] table[row sep=crcr] {%
0.654545454545455	0\\
5.34545454545454	0\\
};
\addlegendentry{Chebyshev 4}

\addplot[ybar, bar width=0.145, fill=mycolor2, draw=black, area legend] table[row sep=crcr] {%
1	0.190274\\
2	0.204233\\
3	0.231501\\
4	0.249125\\
5	0.267846\\
};
\addplot[forget plot, color=white!15!black] table[row sep=crcr] {%
0.654545454545455	0\\
5.34545454545454	0\\
};
\addlegendentry{Opt. Chebyshev 4}

\addplot[ybar, bar width=0.145, fill=mycolor3, draw=black, area legend] table[row sep=crcr] {%
1	0.185702\\
2	0.211789\\
3	0.233622\\
4	0.245654\\
5	0.279582\\
};
\addplot[forget plot, color=white!15!black] table[row sep=crcr] {%
0.654545454545455	0\\
5.34545454545454	0\\
};
\addlegendentry{Opt. Chebyshev 1}

\addplot[ybar, bar width=0.145, fill=mycolor4, draw=black, area legend] table[row sep=crcr] {%
1	0.199471\\
2	0.212359\\
3	0.218678\\
4	0.237151\\
5	0.269773\\
};
\addplot[forget plot, color=white!15!black] table[row sep=crcr] {%
0.654545454545455	0\\
5.34545454545454	0\\
};
\addlegendentry{L1-Jacobi}

\end{axis}

\begin{axis}[%
width=0.33\columnwidth,
height=0.168\columnwidth,
at={(0.378\columnwidth,0.467\columnwidth)},
scale only axis,
bar shift auto,
xmin=0.4,
xmax=5.6,
xtick={1,2,3,4,5},
xticklabels={{64},{128},{256},{512},{1024}},ticklabel style = {font=\scriptsize},
ymin=0,
ymax=0.535495,
ybar=2*\pgflinewidth,
ylabel style={font=\scriptsize},
axis background/.style={fill=white},
title style={font=\scriptsize,yshift=-0.8em},
title={$k = 6$ degree/$\ell_1$-Jacobi sweeps},
xmajorgrids,
ymajorgrids
]
\addplot[ybar, bar width=0.145, fill=mycolor1, draw=black, area legend] table[row sep=crcr] {%
1	0.252974\\
2	0.275256\\
3	0.267309\\
4	0.324439\\
5	0.338519\\
};
\addplot[forget plot, color=white!15!black] table[row sep=crcr] {%
0.654545454545455	0\\
5.34545454545454	0\\
};
\addplot[ybar, bar width=0.145, fill=mycolor2, draw=black, area legend] table[row sep=crcr] {%
1	0.241875\\
2	0.261892\\
3	0.277779\\
4	0.307932\\
5	0.350576\\
};
\addplot[forget plot, color=white!15!black] table[row sep=crcr] {%
0.654545454545455	0\\
5.34545454545454	0\\
};
\addplot[ybar, bar width=0.145, fill=mycolor3, draw=black, area legend] table[row sep=crcr] {%
1	0.242905\\
2	0.262088\\
3	0.291154\\
4	0.309067\\
5	0.360224\\
};
\addplot[forget plot, color=white!15!black] table[row sep=crcr] {%
0.654545454545455	0\\
5.34545454545454	0\\
};
\addplot[ybar, bar width=0.145, fill=mycolor4, draw=black, area legend] table[row sep=crcr] {%
1	0.233695\\
2	0.268069\\
3	0.282099\\
4	0.294468\\
5	0.370107\\
};
\addplot[forget plot, color=white!15!black] table[row sep=crcr] {%
0.654545454545455	0\\
5.34545454545454	0\\
};
\end{axis}

\begin{axis}[%
width=0.33\columnwidth,
height=0.168\columnwidth,
at={(0\columnwidth,0.233\columnwidth)},
scale only axis,
bar shift auto,
xmin=0.4,
xmax=5.6,
ybar=2*\pgflinewidth,
xtick={1,2,3,4,5},
xticklabels={{64},{128},{256},{512},{1024}},ticklabel style = {font=\scriptsize},
ymin=0,
ymax=0.535495,
ylabel style={font=\scriptsize},
ylabel={Time per iteration (s)},
axis background/.style={fill=white},
title style={font=\scriptsize,yshift=-0.8em},
title={$k = 8$ degree/$\ell_1$-Jacobi sweeps},
xmajorgrids,
ymajorgrids
]
\addplot[ybar, bar width=0.145, fill=mycolor1, draw=black, area legend] table[row sep=crcr] {%
1	0.289467\\
2	0.314433\\
3	0.344158\\
4	0.357377\\
5	0.401122\\
};
\addplot[forget plot, color=white!15!black] table[row sep=crcr] {%
0.654545454545455	0\\
5.34545454545454	0\\
};
\addplot[ybar, bar width=0.145, fill=mycolor2, draw=black, area legend] table[row sep=crcr] {%
1	0.293766\\
2	0.321247\\
3	0.339147\\
4	0.369962\\
5	0.442612\\
};
\addplot[forget plot, color=white!15!black] table[row sep=crcr] {%
0.654545454545455	0\\
5.34545454545454	0\\
};
\addplot[ybar, bar width=0.145, fill=mycolor3, draw=black, area legend] table[row sep=crcr] {%
1	0.297817\\
2	0.33724\\
3	0.339088\\
4	0.36413\\
5	0.393268\\
};
\addplot[forget plot, color=white!15!black] table[row sep=crcr] {%
0.654545454545455	0\\
5.34545454545454	0\\
};
\addplot[ybar, bar width=0.145, fill=mycolor4, draw=black, area legend] table[row sep=crcr] {%
1	0.292102\\
2	0.308739\\
3	0.344567\\
4	0.379823\\
5	0.404165\\
};
\addplot[forget plot, color=white!15!black] table[row sep=crcr] {%
0.654545454545455	0\\
5.34545454545454	0\\
};
\end{axis}

\begin{axis}[%
width=0.33\columnwidth,
height=0.168\columnwidth,
at={(0.378\columnwidth,0.233\columnwidth)},
scale only axis,
bar shift auto,
xmin=0.4,
xmax=5.6,
ybar=2*\pgflinewidth,
xtick={1,2,3,4,5},
xticklabels={{64},{128},{256},{512},{1024}},ticklabel style = {font=\scriptsize},
xlabel style={font=\color{white!15!black}},
xlabel={Number of GPUs},
ymin=0,
ymax=0.535495,
ylabel style={font=\scriptsize},
axis background/.style={fill=white},
title style={font=\scriptsize,yshift=-0.8em},
title={$k = 10$ degree/$\ell_1$-Jacobi sweeps},
xmajorgrids,
ymajorgrids
]
\addplot[ybar, bar width=0.145, fill=mycolor1, draw=black, area legend] table[row sep=crcr] {%
1	0.354329\\
2	0.382774\\
3	0.403526\\
4	0.434718\\
5	0.447462\\
};
\addplot[forget plot, color=white!15!black] table[row sep=crcr] {%
0.654545454545455	0\\
5.34545454545454	0\\
};
\addplot[ybar, bar width=0.145, fill=mycolor2, draw=black, area legend] table[row sep=crcr] {%
1	0.363237\\
2	0.368483\\
3	0.387586\\
4	0.437397\\
5	0.45473\\
};
\addplot[forget plot, color=white!15!black] table[row sep=crcr] {%
0.654545454545455	0\\
5.34545454545454	0\\
};
\addplot[ybar, bar width=0.145, fill=mycolor3, draw=black, area legend] table[row sep=crcr] {%
1	0.353285\\
2	0.358691\\
3	0.37913\\
4	0.41487\\
5	0.430846\\
};
\addplot[forget plot, color=white!15!black] table[row sep=crcr] {%
0.654545454545455	0\\
5.34545454545454	0\\
};
\addplot[ybar, bar width=0.145, fill=mycolor4, draw=black, area legend] table[row sep=crcr] {%
1	0.345411\\
2	0.377253\\
3	0.394266\\
4	0.425978\\
5	0.452435\\
};
\addplot[forget plot, color=white!15!black] table[row sep=crcr] {%
0.654545454545455	0\\
5.34545454545454	0\\
};
\end{axis}

\begin{axis}[%
width=0.33\columnwidth,
height=0.168\columnwidth,
at={(0\columnwidth,0\columnwidth)},
scale only axis,
bar shift auto,
xmin=0.4,
xmax=5.6,
ybar=2*\pgflinewidth,
xtick={1,2,3,4,5},
xticklabels={{64},{128},{256},{512},{1024}},ticklabel style = {font=\scriptsize},
xlabel style={font=\color{white!15!black}},
xlabel={Number of GPUs},
ymin=0,
ymax=0.535495,
ylabel style={font=\scriptsize},
ylabel={Time per iteration (s)},
axis background/.style={fill=white},
title style={font=\scriptsize,yshift=-0.8em},
title={$k = 12$ degree/$\ell_1$-Jacobi sweeps},
xmajorgrids,
ymajorgrids
]
\addplot[ybar, bar width=0.145, fill=mycolor1, draw=black, area legend] table[row sep=crcr] {%
1	0.408142\\
2	0.420145\\
3	0.453132\\
4	0.494481\\
5	0.516692\\
};
\addplot[forget plot, color=white!15!black] table[row sep=crcr] {%
0.654545454545455	0\\
5.34545454545454	0\\
};
\addplot[ybar, bar width=0.145, fill=mycolor2, draw=black, area legend] table[row sep=crcr] {%
1	0.388873\\
2	0.430762\\
3	0.45952\\
4	0.491894\\
5	0.535495\\
};
\addplot[forget plot, color=white!15!black] table[row sep=crcr] {%
0.654545454545455	0\\
5.34545454545454	0\\
};
\addplot[ybar, bar width=0.145, fill=mycolor3, draw=black, area legend] table[row sep=crcr] {%
1	0.384155\\
2	0.425286\\
3	0.430518\\
4	0.513773\\
5	0.527153\\
};
\addplot[forget plot, color=white!15!black] table[row sep=crcr] {%
0.654545454545455	0\\
5.34545454545454	0\\
};
\addplot[ybar, bar width=0.145, fill=mycolor4, draw=black, area legend] table[row sep=crcr] {%
1	0.398095\\
2	0.425516\\
3	0.444655\\
4	0.499055\\
5	0.519441\\
};
\addplot[forget plot, color=white!15!black] table[row sep=crcr] {%
0.654545454545455	0\\
5.34545454545454	0\\
};
\end{axis}
\end{tikzpicture}%
	\caption{Time per iteration. Time per iteration for the Anisotropy 2D test case $(\varepsilon=100,\theta=\pi/6)$ employing the multigrid scheme based on the VBM aggregation in conjunction with the different polynomial accelerators and the baseline $\ell_1$-Jacobi for comparison.}
	\label{fig:anisotropy_1024_soc1_timeperiteration}
\end{figure}
As the degree of the polynomial increases, the implementation scalability properties of the method are confirmed. The specific observation to be made is that the time per iteration always remains perfectly comparable with that of the basic smoother which is $\ell_1$-Jacobi and that this grows in the expected manner as the degree $k$ of the polynomial representing the smoother increases. In particular, this shows that our GPU implementation of the different methods behaves consistently with the degree of the polynomial.

\subsubsection{Compatible Matching}\label{sec:sm_match_aniso} 
In the paper, we only report the results for the VBM aggregation-based preconditioner, which achieves the best results on this problem. To complete the overview, we also consider here the solution employing the multigrid preconditioners with aggregation based on the compatible matching strategy while limiting ourselves, due to the limits in core-hours granted, to scaling results up to 512 GPUs. However, this is enough to describe the behavior of the algorithm. As for all the other cases, we start from Fig.~\ref{fig:anisotropy_1024_match_tsolve} where we report the solve time, and we observe that for low values of the degree we obtain both the best results and that the best result is the one with the quasi-optimal Chebyshev polynomials of 1\textsuperscript{st} degree. This confirms the larger impact of the optimized smoothers for this
type of coarsening scheme.
\begin{figure}[p]
	\centering
%
%
\definecolor{mycolor1}{rgb}{0.00000,0.44700,0.74100}%
\definecolor{mycolor2}{rgb}{0.85000,0.32500,0.09800}%
\definecolor{mycolor3}{rgb}{0.92900,0.69400,0.12500}%
\definecolor{mycolor4}{rgb}{0.49400,0.18400,0.55600}%
\begin{tikzpicture}

\begin{axis}[%
width=0.33\columnwidth,
height=0.168\columnwidth,
at={(0\columnwidth,0.513\columnwidth)},
scale only axis,
bar shift auto,
xmin=0.4,
xmax=4.6,
xtick={1,2,3,4},
xticklabels={{64},{128},{256},{512},{1024}},ticklabel style = {font=\scriptsize},
ybar=2*\pgflinewidth,
ymin=0,
ymax=321.91,
ylabel style={font=\scriptsize},
ylabel={Solve time (s)},
axis background/.style={fill=white},
title style={font=\scriptsize,yshift=-0.8em},
title={$k = 4$ degree/$\ell_1$-Jacobi sweeps},
xmajorgrids,
ymajorgrids,
legend columns=4,
legend style={at={(-0.3,1.4)}, anchor=north west, legend cell align=left, align=left, draw=none, fill=none, font=\scriptsize}
]
\addplot[ybar, bar width=0.145, fill=mycolor1, draw=black, area legend] table[row sep=crcr] {%
1	60.0782\\
2	87.3763\\
3	163.125\\
4	172.521\\
};
\addplot[forget plot, color=white!15!black] table[row sep=crcr] {%
0.654545454545455	0\\
4.34545454545454	0\\
};
\addlegendentry{Chebyshev 4}

\addplot[ybar, bar width=0.145, fill=mycolor2, draw=black, area legend] table[row sep=crcr] {%
1	61.0642\\
2	86.1187\\
3	170.53\\
4	171.81\\
};
\addplot[forget plot, color=white!15!black] table[row sep=crcr] {%
0.654545454545455	0\\
4.34545454545454	0\\
};
\addlegendentry{Opt. Chebyshev 4}

\addplot[ybar, bar width=0.145, fill=mycolor3, draw=black, area legend] table[row sep=crcr] {%
1	60.2353\\
2	86.4055\\
3	163.048\\
4	172.007\\
};
\addplot[forget plot, color=white!15!black] table[row sep=crcr] {%
0.654545454545455	0\\
4.34545454545454	0\\
};
\addlegendentry{Opt. Chebyshev 1}

\addplot[ybar, bar width=0.145, fill=mycolor4, draw=black, area legend] table[row sep=crcr] {%
1	61.5505\\
2	90.7259\\
3	190.288\\
4	184.531\\
};
\addplot[forget plot, color=white!15!black] table[row sep=crcr] {%
0.654545454545455	0\\
4.34545454545454	0\\
};
\addlegendentry{L1-Jacobi}

\end{axis}

\begin{axis}[%
width=0.33\columnwidth,
height=0.168\columnwidth,
at={(0.378\columnwidth,0.513\columnwidth)},
scale only axis,
bar shift auto,
xmin=0.4,
xmax=4.6,
xtick={1,2,3,4},
xticklabels={{64},{128},{256},{512},{1024}},ticklabel style = {font=\scriptsize},
ymin=0,
ybar=2*\pgflinewidth,
ymax=321.91,
axis background/.style={fill=white},
title style={font=\scriptsize,yshift=-0.8em},
title={$k = 6$ degree/$\ell_1$-Jacobi sweeps},
xmajorgrids,
ymajorgrids
]
\addplot[ybar, bar width=0.145, fill=mycolor1, draw=black, area legend] table[row sep=crcr] {%
1	70.6185\\
2	102.038\\
3	189.932\\
4	196.652\\
};
\addplot[forget plot, color=white!15!black] table[row sep=crcr] {%
0.654545454545455	0\\
4.34545454545454	0\\
};
\addplot[ybar, bar width=0.145, fill=mycolor2, draw=black, area legend] table[row sep=crcr] {%
1	70.1439\\
2	99.5092\\
3	185.548\\
4	192.249\\
};
\addplot[forget plot, color=white!15!black] table[row sep=crcr] {%
0.654545454545455	0\\
4.34545454545454	0\\
};
\addplot[ybar, bar width=0.145, fill=mycolor3, draw=black, area legend] table[row sep=crcr] {%
1	73.8137\\
2	102.154\\
3	187.392\\
4	196.937\\
};
\addplot[forget plot, color=white!15!black] table[row sep=crcr] {%
0.654545454545455	0\\
4.34545454545454	0\\
};
\addplot[ybar, bar width=0.145, fill=mycolor4, draw=black, area legend] table[row sep=crcr] {%
1	77.1042\\
2	112.249\\
3	213.77\\
4	220.45\\
};
\addplot[forget plot, color=white!15!black] table[row sep=crcr] {%
0.654545454545455	0\\
4.34545454545454	0\\
};
\end{axis}

\begin{axis}[%
width=0.33\columnwidth,
height=0.168\columnwidth,
at={(0\columnwidth,0.257\columnwidth)},
scale only axis,
bar shift auto,
xmin=0.4,
xmax=4.6,
xtick={1,2,3,4},
xticklabels={{64},{128},{256},{512},{1024}},ticklabel style = {font=\scriptsize},
ymin=0,
ybar=2*\pgflinewidth,
ymax=321.91,
ylabel style={font=\scriptsize},
ylabel={Solve time (s)},
axis background/.style={fill=white},
title style={font=\scriptsize,yshift=-0.8em},
title={$k = 8$ degree/$\ell_1$-Jacobi sweeps},
xmajorgrids,
ymajorgrids
]
\addplot[ybar, bar width=0.145, fill=mycolor1, draw=black, area legend] table[row sep=crcr] {%
1	77.5922\\
2	111.877\\
3	201.091\\
4	216.459\\
};
\addplot[forget plot, color=white!15!black] table[row sep=crcr] {%
0.654545454545455	0\\
4.34545454545454	0\\
};
\addplot[ybar, bar width=0.145, fill=mycolor2, draw=black, area legend] table[row sep=crcr] {%
1	74.739\\
2	108.749\\
3	182.919\\
4	211.954\\
};
\addplot[forget plot, color=white!15!black] table[row sep=crcr] {%
0.654545454545455	0\\
4.34545454545454	0\\
};
\addplot[ybar, bar width=0.145, fill=mycolor3, draw=black, area legend] table[row sep=crcr] {%
1	81.4718\\
2	115.073\\
3	198.202\\
4	220.297\\
};
\addplot[forget plot, color=white!15!black] table[row sep=crcr] {%
0.654545454545455	0\\
4.34545454545454	0\\
};
\addplot[ybar, bar width=0.145, fill=mycolor4, draw=black, area legend] table[row sep=crcr] {%
1	94.2121\\
2	132.64\\
3	252.312\\
4	257.065\\
};
\addplot[forget plot, color=white!15!black] table[row sep=crcr] {%
0.654545454545455	0\\
4.34545454545454	0\\
};
\end{axis}

\begin{axis}[%
width=0.33\columnwidth,
height=0.168\columnwidth,
at={(0.378\columnwidth,0.257\columnwidth)},
scale only axis,
bar shift auto,
xmin=0.4,
xmax=4.6,
xtick={1,2,3,4},
xticklabels={{64},{128},{256},{512},{1024}},
xlabel style={font=\scriptsize},ticklabel style = {font=\scriptsize},
xlabel={Number of GPUs},
ymin=0,
ybar=2*\pgflinewidth,
ymax=321.91,
axis background/.style={fill=white},
title style={font=\scriptsize,yshift=-0.8em},
title={$k = 10$ degree/$\ell_1$-Jacobi sweeps},
xmajorgrids,
ymajorgrids
]
\addplot[ybar, bar width=0.145, fill=mycolor1, draw=black, area legend] table[row sep=crcr] {%
1	84.4093\\
2	119.167\\
3	207.378\\
4	236.012\\
};
\addplot[forget plot, color=white!15!black] table[row sep=crcr] {%
0.654545454545455	0\\
4.34545454545454	0\\
};
\addplot[ybar, bar width=0.145, fill=mycolor2, draw=black, area legend] table[row sep=crcr] {%
1	80.1112\\
2	114.802\\
3	192.655\\
4	231.24\\
};
\addplot[forget plot, color=white!15!black] table[row sep=crcr] {%
0.654545454545455	0\\
4.34545454545454	0\\
};
\addplot[ybar, bar width=0.145, fill=mycolor3, draw=black, area legend] table[row sep=crcr] {%
1	87.9564\\
2	124.794\\
3	209.094\\
4	241.983\\
};
\addplot[forget plot, color=white!15!black] table[row sep=crcr] {%
0.654545454545455	0\\
4.34545454545454	0\\
};
\addplot[ybar, bar width=0.145, fill=mycolor4, draw=black, area legend] table[row sep=crcr] {%
1	107.632\\
2	153.231\\
3	285.305\\
4	294.201\\
};
\addplot[forget plot, color=white!15!black] table[row sep=crcr] {%
0.654545454545455	0\\
4.34545454545454	0\\
};
\end{axis}

\begin{axis}[%
width=0.33\columnwidth,
height=0.168\columnwidth,
at={(0\columnwidth,0\columnwidth)},
scale only axis,
bar shift auto,
xmin=0.4,
xmax=4.6,
xtick={1,2,3,4},
xticklabels={{64},{128},{256},{512},{1024}},
xlabel style={font=\scriptsize},ticklabel style = {font=\scriptsize},
xlabel={Number of GPUs},
ymin=0,
ybar=2*\pgflinewidth,
ymax=321.91,
ylabel style={font=\scriptsize},
ylabel={Solve time (s)},
axis background/.style={fill=white},
title style={font=\scriptsize,yshift=-0.8em},
title={$k = 12$ degree/$\ell_1$-Jacobi sweeps},
xmajorgrids,
ymajorgrids
]
\addplot[ybar, bar width=0.145, fill=mycolor1, draw=black, area legend] table[row sep=crcr] {%
1	91.0606\\
2	126.142\\
3	214.509\\
4	253.619\\
};
\addplot[forget plot, color=white!15!black] table[row sep=crcr] {%
0.654545454545455	0\\
4.34545454545454	0\\
};
\addplot[ybar, bar width=0.145, fill=mycolor2, draw=black, area legend] table[row sep=crcr] {%
1	85.9401\\
2	120.98\\
3	206.755\\
4	247.896\\
};
\addplot[forget plot, color=white!15!black] table[row sep=crcr] {%
0.654545454545455	0\\
4.34545454545454	0\\
};
\addplot[ybar, bar width=0.145, fill=mycolor3, draw=black, area legend] table[row sep=crcr] {%
1	94.7536\\
2	132.239\\
3	225.069\\
4	258.325\\
};
\addplot[forget plot, color=white!15!black] table[row sep=crcr] {%
0.654545454545455	0\\
4.34545454545454	0\\
};
\addplot[ybar, bar width=0.145, fill=mycolor4, draw=black, area legend] table[row sep=crcr] {%
1	119.841\\
2	172.529\\
3	321.208\\
4	321.91\\
};
\addplot[forget plot, color=white!15!black] table[row sep=crcr] {%
0.654545454545455	0\\
4.34545454545454	0\\
};
\end{axis}
\end{tikzpicture}%
	
	\caption{Solve time (s). Time to solution (s) for the Anisotropy 2D test case $(\varepsilon=100,\theta=\pi/6)$ employing the multigrid scheme based on the compatible matching aggregation in conjunction with the different polynomial accelerators and the baseline $\ell_1$-Jacobi for comparison.}
	\label{fig:anisotropy_1024_match_tsolve}
\end{figure}
If we turn now to the iteration count reported in Fig.~\ref{fig:anisotropy_1024_match_itercount} we observe again a behavior comparable to that analyzed in the case of the VBM aggregation, albeit with a higher overall number of iterations.
\begin{figure}[p]
	\centering
%
%
\definecolor{mycolor1}{rgb}{0.00000,0.44700,0.74100}%
\definecolor{mycolor2}{rgb}{0.85000,0.32500,0.09800}%
\definecolor{mycolor3}{rgb}{0.92900,0.69400,0.12500}%
\definecolor{mycolor4}{rgb}{0.49400,0.18400,0.55600}%
\begin{tikzpicture}

\begin{axis}[%
width=0.33\columnwidth,
height=0.168\columnwidth,
at={(0\columnwidth,0.514\columnwidth)},
scale only axis,
bar shift auto,
xmin=0.4,
xmax=4.6,
ybar=2*\pgflinewidth,
xtick={1,2,3,4},
xticklabels={{64},{128},{256},{512},{1024}},ticklabel style = {font=\scriptsize},
ymin=0,
ymax=913,
ylabel style={font=\scriptsize},
ylabel={Iterations},
axis background/.style={fill=white},
title style={font=\scriptsize,yshift=-0.8em},
title={$k = 4$ degree/$\ell_1$-Jacobi sweeps},
xmajorgrids,
ymajorgrids,
legend columns=4,
legend style={at={(-0.3,1.4)}, anchor=north west, legend cell align=left, align=left, draw=none, fill=none, font=\scriptsize}
]
\addplot[ybar, bar width=0.145, fill=mycolor1, draw=black, area legend] table[row sep=crcr] {%
1	352\\
2	498\\
4	835\\
3	606\\
};
\addplot[forget plot, color=white!15!black] table[row sep=crcr] {%
0.654545454545455	0\\
4.34545454545454	0\\
};
\addlegendentry{Chebyshev 4}

\addplot[ybar, bar width=0.145, fill=mycolor2, draw=black, area legend] table[row sep=crcr] {%
1	349\\
2	492\\
4	820\\
3	599\\
};
\addplot[forget plot, color=white!15!black] table[row sep=crcr] {%
0.654545454545455	0\\
4.34545454545454	0\\
};
\addlegendentry{Opt. Chebyshev 4}

\addplot[ybar, bar width=0.145, fill=mycolor3, draw=black, area legend] table[row sep=crcr] {%
1	352\\
2	497\\
4	833\\
3	604\\
};
\addplot[forget plot, color=white!15!black] table[row sep=crcr] {%
0.654545454545455	0\\
4.34545454545454	0\\
};
\addlegendentry{Opt. Chebyshev 1}

\addplot[ybar, bar width=0.145, fill=mycolor4, draw=black, area legend] table[row sep=crcr] {%
1	374\\
2	531\\
4	913\\
3	654\\
};
\addplot[forget plot, color=white!15!black] table[row sep=crcr] {%
0.654545454545455	0\\
4.34545454545454	0\\
};
\addlegendentry{L1-Jacobi}

\end{axis}

\begin{axis}[%
width=0.33\columnwidth,
height=0.168\columnwidth,
at={(0.378\columnwidth,0.514\columnwidth)},
scale only axis,
bar shift auto,
xmin=0.4,
xmax=4.6,
ybar=2*\pgflinewidth,
xtick={1,2,3,4},
xticklabels={{64},{128},{256},{512},{1024}},ticklabel style = {font=\scriptsize},
ymin=0,
ymax=913,
axis background/.style={fill=white},
title style={font=\scriptsize,yshift=-0.8em},
title={$k = 6$ degree/$\ell_1$-Jacobi sweeps},
xmajorgrids,
ymajorgrids
]
\addplot[ybar, bar width=0.145, fill=mycolor1, draw=black, area legend] table[row sep=crcr] {%
1	327\\
2	462\\
4	751\\
3	560\\
};
\addplot[forget plot, color=white!15!black] table[row sep=crcr] {%
0.654545454545455	0\\
4.34545454545454	0\\
};
\addplot[ybar, bar width=0.145, fill=mycolor2, draw=black, area legend] table[row sep=crcr] {%
1	321\\
2	451\\
4	727\\
3	549\\
};
\addplot[forget plot, color=white!15!black] table[row sep=crcr] {%
0.654545454545455	0\\
4.34545454545454	0\\
};
\addplot[ybar, bar width=0.145, fill=mycolor3, draw=black, area legend] table[row sep=crcr] {%
1	329\\
2	464\\
4	757\\
3	563\\
};
\addplot[forget plot, color=white!15!black] table[row sep=crcr] {%
0.654545454545455	0\\
4.34545454545454	0\\
};
\addplot[ybar, bar width=0.145, fill=mycolor4, draw=black, area legend] table[row sep=crcr] {%
1	366\\
2	520\\
4	890\\
3	637\\
};
\addplot[forget plot, color=white!15!black] table[row sep=crcr] {%
0.654545454545455	0\\
4.34545454545454	0\\
};
\end{axis}

\begin{axis}[%
width=0.33\columnwidth,
height=0.168\columnwidth,
at={(0\columnwidth,0.257\columnwidth)},
scale only axis,
bar shift auto,
xmin=0.4,
xmax=4.6,
ybar=2*\pgflinewidth,
xtick={1,2,3,4},
xticklabels={{64},{128},{256},{512},{1024}},ticklabel style = {font=\scriptsize},
ymin=0,
ymax=913,
ylabel style={font=\scriptsize},
ylabel={Iterations},
axis background/.style={fill=white},
title style={font=\scriptsize,yshift=-0.8em},
title={$k = 8$ degree/$\ell_1$-Jacobi sweeps},
xmajorgrids,
ymajorgrids
]
\addplot[ybar, bar width=0.145, fill=mycolor1, draw=black, area legend] table[row sep=crcr] {%
1	302\\
2	423\\
4	670\\
3	518\\
};
\addplot[forget plot, color=white!15!black] table[row sep=crcr] {%
0.654545454545455	0\\
4.34545454545454	0\\
};
\addplot[ybar, bar width=0.145, fill=mycolor2, draw=black, area legend] table[row sep=crcr] {%
1	292\\
2	410\\
4	641\\
3	506\\
};
\addplot[forget plot, color=white!15!black] table[row sep=crcr] {%
0.654545454545455	0\\
4.34545454545454	0\\
};
\addplot[ybar, bar width=0.145, fill=mycolor3, draw=black, area legend] table[row sep=crcr] {%
1	307\\
2	431\\
4	685\\
3	526\\
};
\addplot[forget plot, color=white!15!black] table[row sep=crcr] {%
0.654545454545455	0\\
4.34545454545454	0\\
};
\addplot[ybar, bar width=0.145, fill=mycolor4, draw=black, area legend] table[row sep=crcr] {%
1	361\\
2	511\\
4	871\\
3	624\\
};
\addplot[forget plot, color=white!15!black] table[row sep=crcr] {%
0.654545454545455	0\\
4.34545454545454	0\\
};
\end{axis}

\begin{axis}[%
width=0.33\columnwidth,
height=0.168\columnwidth,
at={(0.378\columnwidth,0.257\columnwidth)},
scale only axis,
bar shift auto,
xmin=0.4,
xmax=4.6,
ybar=2*\pgflinewidth,
xtick={1,2,3,4},
xticklabels={{64},{128},{256},{512},{1024}},ticklabel style = {font=\scriptsize},
xlabel style={font=\scriptsize},ticklabel style = {font=\scriptsize},
xlabel={Number of GPUs},
ymin=0,
ymax=913,
axis background/.style={fill=white},
title style={font=\scriptsize,yshift=-0.8em},
title={$k = 10$ degree/$\ell_1$-Jacobi sweeps},
xmajorgrids,
ymajorgrids
]
\addplot[ybar, bar width=0.145, fill=mycolor1, draw=black, area legend] table[row sep=crcr] {%
1	277\\
2	388\\
4	599\\
3	487\\
};
\addplot[forget plot, color=white!15!black] table[row sep=crcr] {%
0.654545454545455	0\\
4.34545454545454	0\\
};
\addplot[ybar, bar width=0.145, fill=mycolor2, draw=black, area legend] table[row sep=crcr] {%
1	266\\
2	372\\
4	569\\
3	474\\
};
\addplot[forget plot, color=white!15!black] table[row sep=crcr] {%
0.654545454545455	0\\
4.34545454545454	0\\
};
\addplot[ybar, bar width=0.145, fill=mycolor3, draw=black, area legend] table[row sep=crcr] {%
1	285\\
2	399\\
4	622\\
3	497\\
};
\addplot[forget plot, color=white!15!black] table[row sep=crcr] {%
0.654545454545455	0\\
4.34545454545454	0\\
};
\addplot[ybar, bar width=0.145, fill=mycolor4, draw=black, area legend] table[row sep=crcr] {%
1	355\\
2	503\\
4	854\\
3	613\\
};
\addplot[forget plot, color=white!15!black] table[row sep=crcr] {%
0.654545454545455	0\\
4.34545454545454	0\\
};
\end{axis}

\begin{axis}[%
width=0.33\columnwidth,
height=0.168\columnwidth,
at={(0\columnwidth,0\columnwidth)},
scale only axis,
bar shift auto,
xmin=0.4,
xmax=4.6,
ybar=2*\pgflinewidth,
xtick={1,2,3,4},
xticklabels={{64},{128},{256},{512},{1024}},ticklabel style = {font=\scriptsize},
xlabel style={font=\scriptsize},ticklabel style = {font=\scriptsize},
xlabel={Number of GPUs},
ymin=0,
ymax=913,
ylabel style={font=\scriptsize},
ylabel={Iterations},
axis background/.style={fill=white},
title style={font=\scriptsize,yshift=-0.8em},
title={$k = 12$ degree/$\ell_1$-Jacobi sweeps},
xmajorgrids,
ymajorgrids
]
\addplot[ybar, bar width=0.145, fill=mycolor1, draw=black, area legend] table[row sep=crcr] {%
1	256\\
2	356\\
4	543\\
3	461\\
};
\addplot[forget plot, color=white!15!black] table[row sep=crcr] {%
0.654545454545455	0\\
4.34545454545454	0\\
};
\addplot[ybar, bar width=0.145, fill=mycolor2, draw=black, area legend] table[row sep=crcr] {%
1	245\\
2	341\\
4	515\\
3	450\\
};
\addplot[forget plot, color=white!15!black] table[row sep=crcr] {%
0.654545454545455	0\\
4.34545454545454	0\\
};
\addplot[ybar, bar width=0.145, fill=mycolor3, draw=black, area legend] table[row sep=crcr] {%
1	266\\
2	372\\
4	569\\
3	474\\
};
\addplot[forget plot, color=white!15!black] table[row sep=crcr] {%
0.654545454545455	0\\
4.34545454545454	0\\
};
\addplot[ybar, bar width=0.145, fill=mycolor4, draw=black, area legend] table[row sep=crcr] {%
1	351\\
2	497\\
4	838\\
3	604\\
};
\addplot[forget plot, color=white!15!black] table[row sep=crcr] {%
0.654545454545455	0\\
4.34545454545454	0\\
};
\end{axis}
\end{tikzpicture}%
	
	\caption{Iteration count. Total number of FCG iterations for the Anisotropy 2D test case $(\varepsilon=100,\theta=\pi/6)$ employing the multigrid scheme based on the compatible matching aggregation in conjunction with the different polynomial accelerators and the baseline $\ell_1$-Jacobi for comparison.}
	\label{fig:anisotropy_1024_match_itercount}
\end{figure}
The behavior with respect to the implementation scalability of the smoothers remains unchanged with respect to the case with the VBM aggregation strategy as shown in~Fig.~\ref{fig:anisotropy_1024_match_timeperiteration}.
\begin{figure}[p]
	\centering
%
%
\definecolor{mycolor1}{rgb}{0.00000,0.44700,0.74100}%
\definecolor{mycolor2}{rgb}{0.85000,0.32500,0.09800}%
\definecolor{mycolor3}{rgb}{0.92900,0.69400,0.12500}%
\definecolor{mycolor4}{rgb}{0.49400,0.18400,0.55600}%
\begin{tikzpicture}

\begin{axis}[%
width=0.33\columnwidth,
height=0.168\columnwidth,
at={(0\columnwidth,0.513\columnwidth)},
scale only axis,
bar shift auto,
ybar=2*\pgflinewidth,
xmin=0.4,
xmax=4.6,
xtick={1,2,3,4},
xticklabels={{64},{128},{256},{512},{1024}},ticklabel style = {font=\scriptsize},
ymin=0,
ymax=0.54,
ylabel style={font=\scriptsize},
ylabel={Time per iteration (s)},
axis background/.style={fill=white},
title style={font=\scriptsize,yshift=-0.8em},
title={$k = 4$ degree/$\ell_1$-Jacobi sweeps},
xmajorgrids,
ymajorgrids,
legend columns=4,
legend style={at={(-0.3,1.4)}, anchor=north west, legend cell align=left, align=left, draw=none, fill=none, font=\scriptsize}
]
\addplot[ybar, bar width=0.145, fill=mycolor1, draw=black, area legend] table[row sep=crcr] {%
1	0.170677\\
2	0.175454\\
3	0.195359\\
4	0.284689\\
};
\addplot[forget plot, color=white!15!black] table[row sep=crcr] {%
0.654545454545455	0\\
4.34545454545454	0\\
};
\addlegendentry{Chebyshev 4}

\addplot[ybar, bar width=0.145, fill=mycolor2, draw=black, area legend] table[row sep=crcr] {%
1	0.174969\\
2	0.175038\\
3	0.207963\\
4	0.286827\\
};
\addplot[forget plot, color=white!15!black] table[row sep=crcr] {%
0.654545454545455	0\\
4.34545454545454	0\\
};
\addlegendentry{Opt. Chebyshev 4}

\addplot[ybar, bar width=0.145, fill=mycolor3, draw=black, area legend] table[row sep=crcr] {%
1	0.171123\\
2	0.173854\\
3	0.195735\\
4	0.284781\\
};
\addplot[forget plot, color=white!15!black] table[row sep=crcr] {%
0.654545454545455	0\\
4.34545454545454	0\\
};
\addlegendentry{Opt. Chebyshev 1}

\addplot[ybar, bar width=0.145, fill=mycolor4, draw=black, area legend] table[row sep=crcr] {%
1	0.164574\\
2	0.170859\\
3	0.208421\\
4	0.282158\\
};
\addplot[forget plot, color=white!15!black] table[row sep=crcr] {%
0.654545454545455	0\\
4.34545454545454	0\\
};
\addlegendentry{L1-Jacobi}

\end{axis}

\begin{axis}[%
width=0.33\columnwidth,
height=0.168\columnwidth,
at={(0.378\columnwidth,0.513\columnwidth)},
scale only axis,
bar shift auto,
ybar=2*\pgflinewidth,
xmin=0.4,
xmax=4.6,
xtick={1,2,3,4},
xticklabels={{64},{128},{256},{512},{1024}},ticklabel style = {font=\scriptsize},
ymin=0,
ymax=0.54,
axis background/.style={fill=white},
title style={font=\scriptsize,yshift=-0.8em},
title={$k = 6$ degree/$\ell_1$-Jacobi sweeps},
xmajorgrids,
ymajorgrids
]
\addplot[ybar, bar width=0.145, fill=mycolor1, draw=black, area legend] table[row sep=crcr] {%
1	0.215959\\
2	0.220862\\
3	0.252905\\
4	0.351165\\
};
\addplot[forget plot, color=white!15!black] table[row sep=crcr] {%
0.654545454545455	0\\
4.34545454545454	0\\
};
\addplot[ybar, bar width=0.145, fill=mycolor2, draw=black, area legend] table[row sep=crcr] {%
1	0.218517\\
2	0.220641\\
3	0.255224\\
4	0.35018\\
};
\addplot[forget plot, color=white!15!black] table[row sep=crcr] {%
0.654545454545455	0\\
4.34545454545454	0\\
};
\addplot[ybar, bar width=0.145, fill=mycolor3, draw=black, area legend] table[row sep=crcr] {%
1	0.224358\\
2	0.220159\\
3	0.247546\\
4	0.3498\\
};
\addplot[forget plot, color=white!15!black] table[row sep=crcr] {%
0.654545454545455	0\\
4.34545454545454	0\\
};
\addplot[ybar, bar width=0.145, fill=mycolor4, draw=black, area legend] table[row sep=crcr] {%
1	0.210667\\
2	0.215863\\
3	0.240191\\
4	0.346075\\
};
\addplot[forget plot, color=white!15!black] table[row sep=crcr] {%
0.654545454545455	0\\
4.34545454545454	0\\
};
\end{axis}

\begin{axis}[%
width=0.33\columnwidth,
height=0.168\columnwidth,
at={(0\columnwidth,0.256\columnwidth)},
scale only axis,
bar shift auto,
ybar=2*\pgflinewidth,
xmin=0.4,
xmax=4.6,
xtick={1,2,3,4},
xticklabels={{64},{128},{256},{512},{1024}},ticklabel style = {font=\scriptsize},
ymin=0,
ymax=0.54,
ylabel style={font=\scriptsize},
ylabel={Time per iteration (s)},
axis background/.style={fill=white},
title style={font=\scriptsize,yshift=-0.8em},
title={$k = 8$ degree/$\ell_1$-Jacobi sweeps},
xmajorgrids,
ymajorgrids
]
\addplot[ybar, bar width=0.145, fill=mycolor1, draw=black, area legend] table[row sep=crcr] {%
1	0.256928\\
2	0.264484\\
3	0.300135\\
4	0.417875\\
};
\addplot[forget plot, color=white!15!black] table[row sep=crcr] {%
0.654545454545455	0\\
4.34545454545454	0\\
};
\addplot[ybar, bar width=0.145, fill=mycolor2, draw=black, area legend] table[row sep=crcr] {%
1	0.255955\\
2	0.265242\\
3	0.285364\\
4	0.418882\\
};
\addplot[forget plot, color=white!15!black] table[row sep=crcr] {%
0.654545454545455	0\\
4.34545454545454	0\\
};
\addplot[ybar, bar width=0.145, fill=mycolor3, draw=black, area legend] table[row sep=crcr] {%
1	0.26538\\
2	0.26699\\
3	0.289346\\
4	0.418816\\
};
\addplot[forget plot, color=white!15!black] table[row sep=crcr] {%
0.654545454545455	0\\
4.34545454545454	0\\
};
\addplot[ybar, bar width=0.145, fill=mycolor4, draw=black, area legend] table[row sep=crcr] {%
1	0.260975\\
2	0.25957\\
3	0.289681\\
4	0.411964\\
};
\addplot[forget plot, color=white!15!black] table[row sep=crcr] {%
0.654545454545455	0\\
4.34545454545454	0\\
};
\end{axis}

\begin{axis}[%
width=0.33\columnwidth,
height=0.168\columnwidth,
at={(0.378\columnwidth,0.256\columnwidth)},
scale only axis,
bar shift auto,
ybar=2*\pgflinewidth,
xmin=0.4,
xmax=4.6,
xtick={1,2,3,4},
xticklabels={{64},{128},{256},{512},{1024}},ticklabel style = {font=\scriptsize},
xlabel style={font=\color{white!15!black}},
xlabel={Number of GPUs},
ymin=0,
ymax=0.54,
axis background/.style={fill=white},
title style={font=\scriptsize,yshift=-0.8em},
title={$k = 10$ degree/$\ell_1$-Jacobi sweeps},
xmajorgrids,
ymajorgrids
]
\addplot[ybar, bar width=0.145, fill=mycolor1, draw=black, area legend] table[row sep=crcr] {%
1	0.304727\\
2	0.30713\\
3	0.346206\\
4	0.484625\\
};
\addplot[forget plot, color=white!15!black] table[row sep=crcr] {%
0.654545454545455	0\\
4.34545454545454	0\\
};
\addplot[ybar, bar width=0.145, fill=mycolor2, draw=black, area legend] table[row sep=crcr] {%
1	0.30117\\
2	0.308608\\
3	0.338585\\
4	0.487847\\
};
\addplot[forget plot, color=white!15!black] table[row sep=crcr] {%
0.654545454545455	0\\
4.34545454545454	0\\
};
\addplot[ybar, bar width=0.145, fill=mycolor3, draw=black, area legend] table[row sep=crcr] {%
1	0.308619\\
2	0.312766\\
3	0.336164\\
4	0.486887\\
};
\addplot[forget plot, color=white!15!black] table[row sep=crcr] {%
0.654545454545455	0\\
4.34545454545454	0\\
};
\addplot[ybar, bar width=0.145, fill=mycolor4, draw=black, area legend] table[row sep=crcr] {%
1	0.303188\\
2	0.304634\\
3	0.33408\\
4	0.479936\\
};
\addplot[forget plot, color=white!15!black] table[row sep=crcr] {%
0.654545454545455	0\\
4.34545454545454	0\\
};
\end{axis}

\begin{axis}[%
width=0.33\columnwidth,
height=0.168\columnwidth,
at={(0\columnwidth,0\columnwidth)},
scale only axis,
bar shift auto,
ybar=2*\pgflinewidth,
xmin=0.4,
xmax=4.6,
xtick={1,2,3,4},
xticklabels={{64},{128},{256},{512},{1024}},ticklabel style = {font=\scriptsize},
ymin=0,
ymax=0.54,
ylabel style={font=\scriptsize},
ylabel={Time per iteration (s)},
axis background/.style={fill=white},
title style={font=\scriptsize,yshift=-0.8em},
title={$k = 12$ degree/$\ell_1$-Jacobi sweeps},
xmajorgrids,
ymajorgrids
]
\addplot[ybar, bar width=0.145, fill=mycolor1, draw=black, area legend] table[row sep=crcr] {%
1	0.355705\\
2	0.354331\\
3	0.395044\\
4	0.550149\\
};
\addplot[forget plot, color=white!15!black] table[row sep=crcr] {%
0.654545454545455	0\\
4.34545454545454	0\\
};
\addplot[ybar, bar width=0.145, fill=mycolor2, draw=black, area legend] table[row sep=crcr] {%
1	0.350776\\
2	0.354781\\
3	0.401467\\
4	0.55088\\
};
\addplot[forget plot, color=white!15!black] table[row sep=crcr] {%
0.654545454545455	0\\
4.34545454545454	0\\
};
\addplot[ybar, bar width=0.145, fill=mycolor3, draw=black, area legend] table[row sep=crcr] {%
1	0.356216\\
2	0.355481\\
3	0.395551\\
4	0.54499\\
};
\addplot[forget plot, color=white!15!black] table[row sep=crcr] {%
0.654545454545455	0\\
4.34545454545454	0\\
};
\addplot[ybar, bar width=0.145, fill=mycolor4, draw=black, area legend] table[row sep=crcr] {%
1	0.341427\\
2	0.347142\\
3	0.383303\\
4	0.532963\\
};
\addplot[forget plot, color=white!15!black] table[row sep=crcr] {%
0.654545454545455	0\\
4.34545454545454	0\\
};
\end{axis}
\end{tikzpicture}%
	
	\caption{Time per iteration. Time per iteration for the Anisotropy 2D test case $(\varepsilon=100,\theta=\pi/6)$ employing the multigrid scheme based on the compatible matching aggregation in conjunction with the different polynomial accelerators and the baseline $\ell_1$-Jacobi for comparison.}
	\label{fig:anisotropy_1024_match_timeperiteration}
\end{figure}

\section{Code snippets}\label{sm:code-snippet}
In this section we report the instructions needed to configure and setup the preconditioners used in the numerical experiments, together with the calls needed for the assembly phase on the GPU accelerator. To allow the possibility of a code in which the chosen matrix storage format is changed at runtime, either to accommodate different algorithmic needs or to be able to execute the same code on the CPU or GPU by changing only the allocation type, \texttt{PSCToolkit} exploits the Fortran \lstinline{MOLD} option and a dynamic data type management. The \lstinline{MOLD} option in Fortran is routinely used in the \lstinline{ALLOCATE} statements to create a new array or scalar variable that has the same type and type parameters as an existing array or scalar. We have adapted the same mechanism to work with the distributed matrix and vector data formats in \texttt{PSBLAS}. For the examples in Section~\ref{sec:results} we have used the GPU version of the Ellpack format, this can be achieved by instantiating the pointer variables:
\begin{lstlisting}
type(psb_d_cuda_hlg_sparse_mat), target :: ahlg
type(psb_d_vect_cuda), target           :: dvgpu
type(psb_i_vect_cuda), target           :: ivgpu
\end{lstlisting}
and pass them in the assembly phases of the matrix, descriptor and vector objects so that they are instantiated and built on the device,
\begin{lstlisting}
call psb_cdasb(desc,info,mold=ivgpu)
call psb_spasb(a,desc,info,mold=ahlg)
call psb_geasb(b,desc,info,mold=dvgpu)
\end{lstlisting}
where \lstinline{a}, \lstinline{b} and \lstinline{desc} are the sparse matrix $A$, the right-hand side $\mathbf{b}$ and the descriptor respectively. Setting and constructing the AMG preconditioners on the matrix $A$ is equally easy. This is an object of type
\begin{lstlisting}
type(amg_dprec_type)  :: prec
\end{lstlisting}
and after being declared, and the assembly of the matrix $A$ (\lstinline{call psb_spasb()}), can have options set with the following calls:
\begin{lstlisting}
call prec%init(ctxt,'ML',info)
call prec%set('ml_cycle','V-Cycle',info)
call prec%set('outer_sweeps',1,info)
call prec%set('aggr_prol','COUPLED',info)
call prec%set('par_aggr_alg','MATCHBOXP',   info)
call prec%set('aggr_type','SMOOTHED', info)
call prec%set('aggr_size',8, info)
call prec%set('smoother_type','POLY',info)
call prec%set('poly_degree',5,info)
call prec%set('poly_variant','CHEB_1_OPT',info)
call prec%set('coarse_solve','L1-JACOBI',info)
call prec%set('coarse_mat','DIST',info)
call prec%set('coarse_sweeps',30,info)
\end{lstlisting}
This set of calls sets up the V-cycle-based AMG preconditioner, which uses the matching-based aggregation scheme compatible with aggregates of size at most $8$. The smoother is the polynomial one based on Chebyshev polynomials of the 1\textsuperscript{st}-kind and degree $5$. The $\ell_1$-Jacobi method is used to solve the coarsest system ($30$ iterations).

Once we are satisfied with the algorithmic choices made we can ask for the assembly of the preconditioner, an operation that is done in two separate parts, one for the hierarchy and one for the smoothers and also ask to pre-allocate the work vectors on the device:
\begin{lstlisting}
call prec%hierarchy_build(a,desc,info)
call prec%smoothers_build(a,desc,info,amold=ahlg,vmold=dvgpu, &
& imold=ivgpu)
call prec%allocate_wrk(info,vmold=dvgpu)
\end{lstlisting}
Note that in general it is possible to reuse the same hierarchy with different smoothers, so if you change your mind---or want to try a different smoother with the same hierarchy---it is sufficient to make the appropriate set calls and call the function \lstinline!prec%smoothers_build( )!.

The solution employing the Flexible Conjugate Gradient can then be obtained by the call
\begin{lstlisting}
call psb_krylov('FCG',a,prec,b,xvec,1d-7,desc,info,itmax=100,itrace=1)
\end{lstlisting}

\end{appendices}


\bibliography{sn-bibliography}

\end{document}